\theoremstyle{definition}
\newtheorem{theorem}{Theorem}[section]
\newtheorem{definition}[theorem]{Definition}
\newtheorem{lemma}[theorem]{Lemma}
\newtheorem{proposition}[theorem]{Proposition}
\newtheorem{corollary}[theorem]{Corollary}
\newtheorem*{corollary*}{Corollary}
\newtheorem*{theorem*}{Theorem}
\theoremstyle{remark}
\newtheorem{remark}[theorem]{Remark}
\newtheorem{example}[theorem]{Example}
\def\PP{\mathbb{P}}
\def\RR{\mathbb{R}}
\def\CC{\mathbb{C}}
\def\ZZ{\mathbb{Z}}
\def\QQ{\mathbb{Q}}
\def\kk{\mathbb{K}}
\DeclareMathOperator{\rk}{rk}
\DeclareMathOperator{\CH}{CH}
\DeclareMathOperator{\Spec}{Spec}
\DeclareMathOperator{\trdeg}{trdeg}
\DeclareMathOperator{\vol}{vol}
\newcommand*{\da@rightarrow}{\mathchar"0\hexnumber@\symAMSa 4B }
\newcommand*{\da@leftarrow}{\mathchar"0\hexnumber@\symAMSa 4C }
\newcommand*{\xdashrightarrow}[2][]{%
  \mathrel{%
    \mathpalette{\da@xarrow{#1}{#2}{}\da@rightarrow{\,}{}}{}%
  }%
}
\newcommand{\xdashleftarrow}[2][]{%
  \mathrel{%
    \mathpalette{\da@xarrow{#1}{#2}\da@leftarrow{}{}{\,}}{}%
  }%
}
\newcommand*{\da@xarrow}[7]{%
  \sbox0{$\ifx#7\scriptstyle\scriptscriptstyle\else\scriptstyle\fi#5#1#6\m@th$}%
  \sbox2{$\ifx#7\scriptstyle\scriptscriptstyle\else\scriptstyle\fi#5#2#6\m@th$}%
  \sbox4{$#7\dabar@\m@th$}%
  \dimen@=\wd0 %
  \ifdim\wd2 >\dimen@
    \dimen@=\wd2 %
  \fi
  \count@=2 %
  \def\da@bars{\dabar@\dabar@}%
  \@whiledim\count@\wd4<\dimen@\do{%
    \advance\count@\@ne
    \expandafter\def\expandafter\da@bars\expandafter{%
      \da@bars
      \dabar@ 
    }%
  }%
  \mathrel{#3}%
  \mathrel{%
    \mathop{\da@bars}\limits
    \ifx\\#1\\%
    \else
      _{\copy0}%
    \fi
    \ifx\\#2\\%
    \else
      ^{\copy2}%
    \fi
  }%
  \mathrel{#4}%
}
\author{Lukas Grund}
\address{Friedrich-Schiller-Universit\"at Jena, Fakult\"at f\"ur Mathematik und Informatik, Institut für Mathematik, Ernst-Abbe-Platz 2, 07743 Jena, Germany}
\email{lukas.grund@uni-jena.de}
\author{June Huh}
\address{Princeton University and Korea Institute for Advanced Study
}
\email{huh@princeton.edu}
\author{Mateusz Micha\l ek}
\address{
University of Konstanz
}
\email{mateusz.michalek@uni-konstanz.de}
\author{Hendrik S\"uss}
\address{Friedrich-Schiller-Universit\"at Jena, Fakult\"at f\"ur Mathematik und Informatik, Institut für Mathematik, Ernst-Abbe-Platz 2, 07743 Jena, Germany}
\email{hendrik.suess@uni-jena.de}
\author{Botong Wang}
\address{University of Wisconsin–Madison}
\email{wang@math.wisc.edu}
\title{Linear operators preserving volume polynomials}
\begin{document}

\begin{abstract}
Volume polynomials measure the growth of Minkowski sums of convex bodies and of tensor powers of positive line bundles on projective varieties. We show that Aluffi’s covolume polynomials are precisely the polynomial differential operators that preserve volume polynomials, reflecting a duality between homology and cohomology. We then present several applications to matroid theory.
\end{abstract}
\maketitle

\section{Introduction}\label{sec:introduction}

In \cite{Minkowski}, Minkowski made a foundational observation that became a cornerstone of convex geometry: For any collection of convex bodies $C=(C_1,\ldots,C_n)$ in $\RR^d$, the function
\[
f_{C}: \RR^n_{\ge 0} \longrightarrow \RR_{\ge 0}, \qquad (x_1,\ldots,x_n) \longmapsto \frac{1}{d!} \vol(x_1C_1+\dots+x_nC_n)
\]
is a degree $d$ homogeneous polynomial in $x_1,\ldots,x_n$.
This polynomial, called the \emph{volume polynomial} of $C$, is then used to define the \emph{mixed volume} of convex bodies as its normalized coefficients
\[
\textrm{MV}(C_{i_1},\ldots,C_{i_d}) \coloneq \frac{\partial}{\partial x_{i_1}}\cdots \frac{\partial}{\partial x_{i_d}} f_{C}(x_1,\ldots,x_n). 
\]
The mixed volumes satisfy the \emph{Alexandrov--Fenchel inequalities}, of which the classical isoperimetric inequality is a special case. For a comprehensive introduction to the Brunn--Minkowski theory, see \cite{Schneider}.

The analogous volume polynomial in algebraic geometry is defined as follows: Let $D=(D_1,\ldots,D_n)$ be a collection of semiample divisors on a $d$-dimensional projective variety $Y$ over an algebraically closed field $k$.\footnote{Throughout this paper, a variety is by definition reduced and irreducible. A Cartier divisor on a complete variety is \emph{semiample} if some positive multiple moves in a basepoint-free linear system. Undefined terms concerning divisors on varieties are as in \cite{Lazarsfeld1}.} Then the \emph{volume polynomial} of $D$ is 
\[
f_D\coloneq \frac{1}{d!}\int_Y \bigg(\sum_{i=1}^n x_i D_i\bigg)^d,
\]
which is a homogeneous polynomial of degree $d$ in $x_1,\ldots,x_n$. 
When the base field is $\CC$ and each $D_i$ is an ample divisor, the restriction of $f_D$ to the nonnegative orthant measures the volume of $Y$ with respect to the K\"ahler class determined by $x$ \cite[Chapter 0]{GriffithsHarris}. 
A standard construction in toric geometry shows that any volume polynomial of convex bodies arises as a limit of volume polynomials of ample divisors on projective varieties \cite[Section 5.4]{FultonToric}. 
Our goal is to demonstrate that the set of all volume polynomials satisfies several remarkable analytic properties.

\begin{definition}\label{def:volpoly}
Let $k$ be an algebraically closed field.
\begin{enumerate}[(1)]\itemsep 5pt
\item A homogeneous polynomial $f$ is a \emph{realizable volume polynomial over $k$} if $f=\lambda f_D$ for some $\lambda \in \QQ_{\ge 0}$ and a collection of semiample divisors $D$ on a projective variety $Y$ over $k$.
\item A homogeneous polynomial $f$ is a \emph{volume polynomial over $k$} if it is a limit of realizable volume polynomials over $k$.
\end{enumerate}
We write $\mathbb{V}^d_n(\mathbb{Q},k)$ for the set of realizable volume polynomials over $k$ of degree $d$ in $n$ variables, and $\mathbb{V}^d_n(\RR,k)$ for the set of all volume polynomials over $k$ of degree $d$ in $n$ variables.
\end{definition}

We say that a quadratic form is \emph{Lorentzian} if it has nonnegative coefficients and its Hessian has at most one positive eigenvalue. 
By \cite[Theorem 1.8]{HHMWW}, we have
\begin{align*}
\mathbb{V}^2_n(\QQ,k)&=\Big\{\emph{Lorentzian quadratic forms in $n$ variables with rational coefficients}\Big\},\\
\mathbb{V}^2_n(\RR,k)&=\Big\{\emph{Lorentzian quadratic forms in $n$ variables}\Big\}.
\end{align*}
We say that a degree $d$ bivariate form $\sum_{k=0}^d c_k \frac{x_1^{k}}{k!}\frac{x_2^{d-k}}{(d-k)!}$ is \emph{Lorentzian} if $(c_k)$ is a log-concave sequence of nonnegative numbers with no internal zeros. Then, by \cite[Theorem 21]{HuhMilnor}, we have
\begin{align*}
\mathbb{V}^d_2(\QQ,k)&=\Big\{\emph{Lorentzian bivariate forms of degree $d$ with rational coefficients}\Big\},\\
\mathbb{V}^d_2(\RR,k)&=\Big\{\emph{Lorentzian bivariate forms of degree $d$}\Big\}.
\end{align*}
In general, a volume polynomial is a \emph{Lorentzian polynomial} in the sense of \cite[Definition 2.1]{BrandenHuh}. 
It is not known whether the set of volume polynomials over $k$ depends on the choice of $k$. On the other hand, the basis generating polynomial of the Fano matroid is a realizable volume polynomial over $k$ if and only if the characteristic of $k$ is $2$, see Example~\ref{ex:Fano}. 
For a Lorentzian polynomial that is not a volume polynomial over $k$ for any $k$, see \cite[Example~14]{HuhICM}.
The distinction between the notions of realizable volume polynomials and volume polynomials will be important in the application to algebraic matroids in Section~\ref{sec:AlgebraicMatroids}.

Aluffi introduced the dual notion of \emph{covolume polynomials} in \cite[Definition 2.1]{Aluffi}. 
To define covolume polynomials and state their main properties, it will be convenient to work with the dual pair of polynomial rings in countably many variables
\[
\RR[\partial]=\varinjlim_{n}  \RR[\partial_1,\ldots,\partial_n]  \quad \text{and} \quad 
\RR[x]=\varinjlim_{n}  \RR[x_1,\ldots,x_n],
\]
where the polynomial rings are given the topology of direct limits. 
We write $\ZZ^\infty_{\ge 0}=\varinjlim_{n}  \mathbb{Z}^n_{\ge 0}$ for the set of exponent vectors of the monomials in $\RR[x]$, and 
 write $x^{[\alpha]}$ for the normalized monomial $\frac{x^\alpha}{\alpha!}$ in $\RR[x]$.
The polynomial ring $\RR[\partial]$ acts on $\RR[x]$ as differential operators by the usual rule
\[
\partial^\alpha \circ x^{[\beta]}\coloneq \begin{cases} x^{[\beta-\alpha]}& \text{if $\alpha \le \beta$,}\\ \hfill 0\hfill & \text{if otherwise,}\end{cases}
\]
where $\alpha \le \beta$ means that their components satisfy $\alpha_i \le \beta_i$ for all $i$.
For any further conventions for multivariate polynomials, we refer to \cite[Section~2]{BrandenHuh}.
For $\mu \in \ZZ^\infty_{\ge 0}$, we consider 
\[
\RR[\partial]_{\le \mu}\coloneq \textrm{span}(\partial^\alpha)_{\alpha \le \mu} \quad \text{and} \quad
\RR[x]_{\le \mu}\coloneq \textrm{span}(x^\alpha)_{\alpha \le \mu}.
\]
Then $\RR[x]_{\le \mu}$ is an $\RR[\partial]$-submodule of $\RR[x]$ generated by $x^{[\mu]}$, and the linear map 
\[
\RR[\partial]_{\le \mu} \longrightarrow \RR[x]_{\le \mu}, \qquad \partial^\alpha \longmapsto \partial^\alpha \circ x^{[\mu]}=x^{[\mu-\alpha]}
\]
is an isomorphism of finite-dimensional vector spaces.

\begin{definition}
Let $g$ be a homogeneous polynomial in $\RR[\partial]_{\le \mu}$.
\begin{enumerate}[(1)]\itemsep 5pt
\item We say that $g$ is a \emph{realizable covolume polynomial over $k$} if $g(\partial) \circ x^{[\mu]}$ is a realizable volume polynomial over $k$.
\item We say that  $g$ is a \emph{covolume polynomial over $k$} if it is a limit of realizable covolume polynomials over $k$.
\end{enumerate}
\end{definition}

As observed in \cite[Remark 2.2]{Aluffi}, the property of being a (realizable) covolume polynomial over $k$ does not depend on the choice of $\mu$. This follows from the cone construction in Section~\ref{sec:volumecovolume}: If $\sum_\alpha c_\alpha x^{[\alpha]}$ is a realizable volume polynomial over $k$, then $\sum_\alpha c_\alpha x^{[\alpha+\beta]}$ is a realizable volume polynomial over $k$ for any nonnegative $\beta$.

Our first main result is the following characterization of realizable covolume polynomials. This parallels the characterization of dually Lorentzian polynomials in \cite[Theorem 1.2]{RSW23}.

\begin{theorem}\label{thm:covolumecharacterization}
The following conditions are equivalent for any polynomial $g \in \QQ[\partial]$.
\begin{enumerate}[(1)]\itemsep 5pt
\item The polynomial $g$ is a realizable covolume polynomial over $k$.
\item For any realizable volume polynomial $f$ over $k$, the polynomial $g(\partial) \circ f(x)$ is a realizable volume polynomial over $k$.
\end{enumerate}
\end{theorem}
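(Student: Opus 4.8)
The plan is to prove the two implications separately, with the geometric content concentrated in $(1) \Rightarrow (2)$. For $(2) \Rightarrow (1)$, I would simply apply the hypothesis to the specific realizable volume polynomial $f = x^{[\mu]}$, where $\mu$ is large enough that $g \in \QQ[\partial]_{\le\mu}$: this is realizable (it is, up to scaling, the volume polynomial of a product of projective spaces, or of a single divisor of self-intersection one on a suitable variety), and $g(\partial)\circ x^{[\mu]}$ being a realizable volume polynomial over $k$ is exactly the definition of $g$ being a realizable covolume polynomial. The $\mu$-independence remark quoted from \cite{Aluffi} (and the cone construction of Section~\ref{sec:volumecovolume}) guarantees this does not depend on the choice of $\mu$.

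For the main implication $(1) \Rightarrow (2)$, write $g \in \QQ[\partial]_{\le\mu}$ homogeneous of some degree $e$, so that $g(\partial)\circ x^{[\mu]} = \lambda f_E$ for a collection of semiample divisors $E = (E_1,\dots,E_m)$ on a projective variety $Z$ of dimension $|\mu| - e$, with $\lambda \in \QQ_{\ge 0}$; here $x^{[\mu]}$ itself is realized on a product of projective spaces $P = \prod_i \PP^{\mu_i}$ with the pullbacks $H_i$ of the hyperplane classes, so that $g(\partial)$ "cuts down" $P$ to $Z$ in the sense that the intersection numbers of $E$ on $Z$ read off the coefficients of $g(\partial)\circ x^{[\mu]}$. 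Now given an arbitrary realizable volume polynomial $f = \nu f_D$ with $D = (D_1,\dots,D_n)$ semiample on a $d$-dimensional projective variety $Y$ over $k$, the goal is to build a single variety whose volume polynomial is $g(\partial)\circ f(x)$ up to a nonnegative rational scalar. The natural candidate is a fiber-product / correspondence construction: one wants a variety $W$ mapping to both $Y$ and $Z$ such that pulling back the $D_i$ from $Y$ and the $E_j$ from $Z$ and taking the appropriate intersection product recovers, coefficient by coefficient, the operator $g(\partial)$ applied to $f$. Concretely, since $g(\partial)\circ f(x) = \sum_\alpha \big(\tfrac{1}{e!}\int_Z (\sum c_\alpha^{(j)} E_j)^{\dots}\big)$-type sums contracted against the coefficients of $f$, one realizes this as $\frac{1}{(d+|\mu|-e)!}\int_{Y\times P}$ of a product of pullbacks of the $D_i$ and of the divisors on $P$ representing $g$ — more precisely, one uses that $g(\partial)\circ f(x)$ equals the volume polynomial of $D$ pulled back to $Y \times Z$ intersected with the pullback of $E$, exploiting that intersection numbers on a product multiply.

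The cleanest route is probably: first reduce to the case $\lambda, \nu > 0$ and handle the zero case trivially; then observe by the projection formula that for semiample $D$ on $Y$ and semiample $E$ on $Z$, the collection $(p_Y^*D_1,\dots,p_Y^*D_n, p_Z^*E_1,\dots,p_Z^*E_m)$ on $Y \times Z$ has volume polynomial equal to $f_D(x)\cdot f_E(y)$ up to a binomial normalization factor, and that substituting and contracting via $g$ amounts to specializing the $y$-variables; the operator identity $g(\partial_x)\circ(x^{[\mu]} f(x)) = f(x)\cdot (g(\partial)\circ x^{[\mu]})$ when $f$ and $g$ involve disjoint variables, combined with the additivity/product behaviour of volume polynomials under disjoint unions of divisors on products (which follows from $\int_{Y\times Z}(\text{class from }Y)^a(\text{class from }Z)^b = \binom{a+b}{a}^{-1}$-normalized products of the individual integrals), should give the result after one checks the numerical bookkeeping. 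I expect the main obstacle to be precisely this bookkeeping: matching the factorial/binomial normalizations in $x^{[\alpha]}$ against the $\frac{1}{d!}\int$ normalization of $f_D$, and verifying that the operator $g(\partial)$ applied across a product of divisors really does correspond to restricting from $Y \times Z$ to an intersection-theoretic cycle class whose support is still a genuine projective variety (so that irreducibility/reducedness is not lost) — this may require passing to an irreducible component with multiplicity, which is where the nonnegative rational scalar $\lambda$ in the definition does its work. The characteristic of $k$ never enters, so the statement and proof are uniform in $k$.
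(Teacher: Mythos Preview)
Your implication $(2)\Rightarrow(1)$ is correct and matches the paper exactly: apply the hypothesis to $x^{[\mu]}$ and read off the definition.

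For $(1)\Rightarrow(2)$ there is a genuine gap. The product $Y\times Z$ equipped with the pulled-back divisors $(p_Y^*D_i,\,p_Z^*E_j)$ has volume polynomial $f(x)\,h(y)$ in \emph{disjoint} variable sets, where $h=g(\partial)\circ y^{[\mu]}$; no linear specialization of this produces $g(\partial)\circ f(x)$, which is a convolution in the \emph{same} variables rather than a diagonal of a tensor product. The operator identity you quote is valid only when $f$ and $g$ involve disjoint variables, which is exactly not the situation at hand. So the ``product and specialize'' route, as written, never reaches the target polynomial.

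The deeper issue is the one you flag at the end but underestimate. Passing to an irreducible component and absorbing a rational scalar $\lambda$ is legitimate only if \emph{every} component carries the same class up to scalar; in general they do not, and the scalar in the definition cannot repair this. The paper's proof instead uses the basic construction to place both $f$ and $g$ as irreducible subvarieties $Y,Z$ of the \emph{same} $\PP^\mu$, and intersects a general translate $gY$ (for $g$ in the automorphism group of $\PP^\mu$) with $Z$. Kleiman transversality makes $[gY\cap Z]$ the correct class, but the decisive ingredient is Lemma~\ref{lem:HomogeneousIntersection}: for general $g$ the irreducible components of $gY\cap Z$ are algebraically equivalent to one another, so any one of them realizes a scalar multiple of $g(\partial)\circ f(x)$. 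That lemma---proved via an irreducible incidence variety over the group and a flat-family argument---is the main technical content of the theorem, and your outline has no substitute for it.
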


The corresponding characterization of covolume polynomials follows by taking limits.

\begin{corollary}\label{cor:covolumecharacterization}
The following conditions are equivalent for any polynomial $g \in \RR[\partial]$.
\begin{enumerate}[(1)]\itemsep 5pt
\item The polynomial $g$ is a covolume polynomial over $k$.
\item For any volume polynomial $f$ over $k$, the polynomial $g(\partial) \circ f(x)$ is a volume polynomial over $k$.
\end{enumerate}
\end{corollary}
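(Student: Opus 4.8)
The plan is to deduce Corollary~\ref{cor:covolumecharacterization} from Theorem~\ref{thm:covolumecharacterization} by a routine limiting argument, but care is needed because the relevant sets of polynomials live in infinite-dimensional spaces and "limit" must be interpreted correctly. First I would fix degrees: since the action of $g(\partial)$ on $f(x)$ only couples finitely many variables and lowers degrees in a controlled way, it suffices to prove the equivalence with $g$ ranging over the finite-dimensional space $\RR[\partial]^e_{\le \mu}$ of homogeneous polynomials of some degree $e$ supported on $\le \mu$, and $f$ ranging over volume polynomials of some fixed degree $d$ in $\le n$ variables. On each such finite-dimensional slice, the set of realizable volume polynomials $\mathbb{V}^d_n(\QQ,k)$ is dense in the set of volume polynomials $\mathbb{V}^d_n(\RR,k)$ by Definition~\ref{def:volpoly}(2), and the latter is the closure of the former; similarly for covolume polynomials.

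The implication (1) $\Rightarrow$ (2) would go as follows. Suppose $g$ is a covolume polynomial over $k$, so $g = \lim_m g_m$ with each $g_m$ a realizable covolume polynomial over $k$, all lying in a common finite-dimensional $\RR[\partial]^e_{\le \mu}$ (one can arrange this since convergence in the direct-limit topology means eventual constancy of the variable set and degree). Let $f$ be any volume polynomial over $k$, say $f = \lim_\ell f_\ell$ with $f_\ell$ realizable volume polynomials in a fixed finite-dimensional space. By Theorem~\ref{thm:covolumecharacterization}, each $g_m(\partial) \circ f_\ell(x)$ is a realizable volume polynomial over $k$. Since the bilinear map $(g, f) \mapsto g(\partial) \circ f(x)$ is continuous on the relevant finite-dimensional spaces, $g(\partial) \circ f(x) = \lim_m \lim_\ell g_m(\partial) \circ f_\ell(x)$ is a limit of realizable volume polynomials, hence a volume polynomial over $k$. (Concretely, fix $m$ and let $\ell \to \infty$ to see $g_m(\partial)\circ f(x)$ is a volume polynomial; then let $m \to \infty$. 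Alternatively diagonalize.)

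For (2) $\Rightarrow$ (1): assume $g(\partial)\circ f(x)$ is a volume polynomial over $k$ for every volume polynomial $f$ over $k$. I want to show $g$ is a covolume polynomial, i.e.\ a limit of realizable covolume polynomials. The natural move is to approximate $g$ itself: but $g \in \RR[\partial]_{\le\mu}$ need not a priori come with an approximation by realizable covolume polynomials — that is exactly what we must prove. Here I would use Theorem~\ref{thm:covolumecharacterization} contrapositively together with a density/compactness argument: applying the hypothesis to the specific realizable volume polynomial $f = x^{[\mu]}$ (which is realizable — it is the volume polynomial of a product of projective spaces, up to scaling), we get that $g(\partial)\circ x^{[\mu]}$ is a volume polynomial over $k$, which says precisely that $g$ is a covolume polynomial over $k$ by the very definition of covolume polynomial. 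So this direction is in fact immediate from the definition once we note $x^{[\mu]} \in \mathbb{V}(\QQ,k)$, and does not even require Theorem~\ref{thm:covolumecharacterization}.

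The only genuine content is therefore the (1) $\Rightarrow$ (2) direction, and there the main obstacle is purely bookkeeping about the direct-limit topology: one must check that a convergent sequence of covolume polynomials can be taken inside a single finite-dimensional $\RR[\partial]^e_{\le\mu}$, that the operator action is continuous there, and that $\mathbb{V}^d_n(\RR,k)$ is genuinely closed so that the resulting limit lands back in the set of volume polynomials. Since $\mathbb{V}^d_n(\RR,k)$ is by Definition~\ref{def:volpoly}(2) the closure of $\mathbb{V}^d_n(\QQ,k)$, closedness is automatic, and the whole corollary follows. I would write this up in a short paragraph, emphasizing that both nontrivial-looking directions reduce to Theorem~\ref{thm:covolumecharacterization} plus the observation that taking closures commutes with the continuous bilinear operation $(g,f)\mapsto g(\partial)\circ f(x)$ on finite-dimensional slices.
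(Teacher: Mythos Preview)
Your proposal is correct and matches the paper's intended argument, which is simply the one-line remark that the corollary ``follows by taking limits'' from Theorem~\ref{thm:covolumecharacterization}. Your observation that $(2)\Rightarrow(1)$ is immediate from testing against $f=x^{[\mu]}$ is exactly right; the only implicit step you might make explicit is that the linear isomorphism $\RR[\partial]_{\le\mu}\to\RR[x]_{\le\mu}$, $\partial^\alpha\mapsto x^{[\mu-\alpha]}$, is a homeomorphism of finite-dimensional spaces, so ``$g(\partial)\circ x^{[\mu]}$ is a volume polynomial'' is literally the same as ``$g$ is a limit of realizable covolume polynomials''.
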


Our next theorem is an immediate consequence of Theorem~\ref{thm:covolumecharacterization}.

\begin{theorem}\label{thm:covolumeproduct}
If $g_1$ and $g_2$ are realizable covolume polynomials over $k$, then  $g_1g_2$ is a realizable covolume polynomial over $k$.
\end{theorem}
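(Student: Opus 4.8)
The plan is to deduce this directly from the characterization in Theorem~\ref{thm:covolumecharacterization}, using that the assignment $g \mapsto g(\partial)$ turns a product of polynomials in $\RR[\partial]$ into the composition of the corresponding differential operators. Concretely, for $g_1, g_2 \in \RR[\partial]$ and $f \in \RR[x]$ one has $(g_1 g_2)(\partial) \circ f = g_1(\partial) \circ \big(g_2(\partial) \circ f\big)$; this is just the statement, already recorded in the text, that $\RR[x]$ is a module over the commutative ring $\RR[\partial]$ (and it can be checked at once on the basis $\partial^\alpha \circ \big(\partial^\gamma \circ x^{[\beta]}\big) = \partial^{\alpha+\gamma}\circ x^{[\beta]}$).

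First I would check that Theorem~\ref{thm:covolumecharacterization} applies to $g \coloneq g_1 g_2$. A product of homogeneous polynomials is homogeneous, and realizable covolume polynomials have rational coefficients: a realizable volume polynomial is a rational multiple of some $f_D$, whose coefficients are intersection numbers divided by $d!$ and hence rational, while the isomorphism $\RR[\partial]_{\le \mu} \to \RR[x]_{\le \mu}$, $\partial^\alpha \mapsto x^{[\mu - \alpha]}$, is defined over $\QQ$; thus $g_1, g_2 \in \QQ[\partial]$ and therefore $g_1 g_2 \in \QQ[\partial]$. By Theorem~\ref{thm:covolumecharacterization} it then suffices to verify condition~(2) for $g_1 g_2$, that is, that for every realizable volume polynomial $f$ over $k$ the polynomial $(g_1 g_2)(\partial) \circ f$ is again a realizable volume polynomial over $k$.

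Then I would apply the implication (1)$\Rightarrow$(2) of Theorem~\ref{thm:covolumecharacterization} twice. Since $g_2$ is a realizable covolume polynomial over $k$ and $f$ is a realizable volume polynomial over $k$, the polynomial $h \coloneq g_2(\partial) \circ f$ is a realizable volume polynomial over $k$. Since $g_1$ is a realizable covolume polynomial over $k$ and $h$ is a realizable volume polynomial over $k$, the polynomial $g_1(\partial) \circ h$ is a realizable volume polynomial over $k$. By the composition identity from the first paragraph, $(g_1 g_2)(\partial) \circ f = g_1(\partial) \circ h$, so condition~(2) holds for $g_1 g_2$, and the implication (2)$\Rightarrow$(1) of Theorem~\ref{thm:covolumecharacterization} gives that $g_1 g_2$ is a realizable covolume polynomial over $k$.

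There is no substantive obstacle here—which is exactly why the statement is an immediate consequence—so the only points needing care are the bookkeeping ones above: that $g_1 g_2$ is homogeneous with rational coefficients so that Theorem~\ref{thm:covolumecharacterization} literally applies, and the operator identity $(g_1 g_2)(\partial) = g_1(\partial) \circ g_2(\partial)$. If one prefers to avoid even the rationality remark, the identical argument with Corollary~\ref{cor:covolumecharacterization} in place of Theorem~\ref{thm:covolumecharacterization} (and ``realizable'' deleted throughout) yields the parallel statement that a product of covolume polynomials over $k$ is a covolume polynomial over $k$.
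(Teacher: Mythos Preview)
Your proof is correct and matches the paper's approach exactly: the paper states that Theorem~\ref{thm:covolumeproduct} is an ``immediate consequence'' of Theorem~\ref{thm:covolumecharacterization}, and the argument you spell out---applying the implication (1)$\Rightarrow$(2) of Theorem~\ref{thm:covolumecharacterization} twice and using $(g_1g_2)(\partial)=g_1(\partial)\circ g_2(\partial)$---is precisely what is intended. The bookkeeping remarks on homogeneity and rationality are accurate and make the deduction rigorous.
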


By taking limits, we recover the following statement of Aluffi \cite[Corollary 2.14]{Aluffi}.

\begin{corollary}\label{cor:covolumeproduct}
If $g_1$ and $g_2$ are covolume polynomials over $k$, then $g_1g_2$ is a covolume polynomial over $k$.
\end{corollary}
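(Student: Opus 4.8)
The plan is to deduce this from Theorem~\ref{thm:covolumeproduct} by passing to limits, in direct analogy with how Corollary~\ref{cor:covolumecharacterization} follows from Theorem~\ref{thm:covolumecharacterization}. Suppose $g_1$ and $g_2$ are covolume polynomials over $k$. They are homogeneous, say of degrees $d_1$ and $d_2$, and both lie in $\RR[\partial_1,\ldots,\partial_N]$ for some $N$. By definition there are sequences $(g_i^{(m)})_{m\ge 1}$ of realizable covolume polynomials over $k$ with $g_i^{(m)}\to g_i$ for $i=1,2$; using the standard conventions for limits of homogeneous polynomials (see \cite[Section~2]{BrandenHuh}), we may take each $g_i^{(m)}$ to be homogeneous of degree $d_i$ in the variables $\partial_1,\ldots,\partial_N$. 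Fix $\mu\in\ZZ^\infty_{\ge 0}$ supported on the first $N$ coordinates with every such coordinate at least $d_1+d_2$; by the remark following the definition of covolume polynomials, being a realizable covolume polynomial is independent of the choice of $\mu$, so we may regard all of $g_i^{(m)}$, $g_i$, and their pairwise products as elements of the finite-dimensional space $\RR[\partial]_{\le\mu}$.

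By Theorem~\ref{thm:covolumeproduct}, each product $g_1^{(m)}g_2^{(m)}$ is a realizable covolume polynomial over $k$. Since multiplication of polynomials is a bilinear, hence continuous, map on finite-dimensional spaces, $g_1^{(m)}g_2^{(m)}\to g_1g_2$ as $m\to\infty$. Thus $g_1g_2$ is a limit of realizable covolume polynomials over $k$, which is exactly the statement that $g_1g_2$ is a covolume polynomial over $k$.

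One could also bypass Theorem~\ref{thm:covolumeproduct} entirely and argue directly from Corollary~\ref{cor:covolumecharacterization}: for any volume polynomial $f$ over $k$, the polynomial $g_1(\partial)\circ f$ is a volume polynomial over $k$, and a second application of the characterization shows that $(g_2g_1)(\partial)\circ f=g_2(\partial)\circ\bigl(g_1(\partial)\circ f\bigr)$ is again a volume polynomial over $k$; as $f$ was arbitrary, Corollary~\ref{cor:covolumecharacterization} gives that $g_1g_2$ is a covolume polynomial over $k$. In either approach the only delicate point is the bookkeeping with the direct-limit topology --- choosing the approximating sequences homogeneous of the right degree and in a common, fixed set of variables, so that continuity of multiplication is applied on an honest finite-dimensional vector space --- and this is routine; there is no substantive obstacle.
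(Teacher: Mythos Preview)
Your proposal is correct and matches the paper's own argument, which simply says ``by taking limits'' from Theorem~\ref{thm:covolumeproduct}. Your alternative route via Corollary~\ref{cor:covolumecharacterization} is also valid and equally short; the paper does not spell it out, but it is implicit once the characterization is in hand.
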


Aluffi shows in \cite[Theorem 2.13]{Aluffi} that any nonnegative linear change of coordinates of a covolume polynomial is also a covolume polynomial, and from this he deduces Corollary~\ref{cor:covolumeproduct}. We strengthen Aluffi's theorem in Theorem~\ref{lem:linmapAluffi} by showing that any nonnegative rational linear change of coordinates of a realizable covolume polynomial is a realizable covolume polynomial.

\begin{remark}[Application to algebraic matroids]
The \emph{support} of a polynomial $f$ in $\RR[x_1,\ldots,x_n]$ is the set of all exponent vectors $\alpha$ such that the monomial $x^\alpha$ appears in $f$ with nonzero coefficient.
By \cite[Theorem 2.25]{BrandenHuh}, the support $J$ of any Lorentzian polynomial is \emph{$\mathrm{M}$-convex} subset of $\ZZ^n_{\ge 0}$: 
\begin{quote}
\emph{For any $i$ and $\alpha,\beta \in J$ whose $i$-th coordinate satisfy $\alpha_i>\beta_i$, there is $j$ satisfying 
\[
\alpha_j<\beta_j \ \ \text{and} \ \ \alpha-e^i+e^j \in J \ \ \text{and} \ \ \beta-e^j+e^i \in J,
\]
where $e^i$ denotes the $i$-th standard basis vector of $\ZZ^n$.}
\end{quote}
The notion of $\mathrm{M}$-convex sets originates in discrete convex analysis \cite[Chapter 4]{Murota}. The condition is equivalent to $J$ being the set of bases of an \emph{integral polymatroid} on $[n]$ in the sense of \cite[Chapter 18]{WelshMatroidTheory}, and to $J$ being the set of lattice points of an \emph{integral generalized permutohedron} in $\RR^n$ in the sense of \cite[Section 4]{Postnikov}.  When $J$ consists of zero-one vectors,  the condition is equivalent to $J$ being the set of bases of a matroid on $[n]$  if we identify a subset of $[n]$ with its indicator vector in $\ZZ^n_{\ge 0}$. 
For a comprehensive treatment of discrete convex analysis and $\mathrm{M}$-convexity, see \cite{Murota}.
 For background specific to  matroids, see \cite{OxleyMatroidTheory}.

Every volume polynomial is Lorentzian \cite[Theorem 4.6]{BrandenHuh}, so its support is the set of bases of an integral polymatroid.  In Proposition~\ref{prop:algebraicPolymatroid}, we prove that polymatroids algebraic over $k$ are precisely those arising as supports of realizable volume polynomials over $k$, thereby answering \cite[Question 5.8]{MultiDegreePositive}.
In Theorem~\ref{thm:MatroidIntersection}, we use this connection to deduce the following statement from
Theorem~\ref{thm:covolumeproduct}: If $M_1$ and $M_2$ are  matroids algebraic over $k$, then the  intersection $M_1 \wedge M_2$ is algebraic over $k$.
 This generalizes Piff's theorem that the truncation of an algebraic matroid is algebraic \cite[Section 11.3, Theorem 2]{WelshMatroidTheory}, and complements the dual result of Welsh that the union of matroids algebraic over $k$ is algebraic over $k$ \cite[Section 11.3, Theorem 4]{WelshMatroidTheory}.
\end{remark}

\begin{remark}[Application to Brunn--Minkowski theory]
Corollary~\ref{cor:covolumecharacterization} can be used to deduce new inequalities for mixed volumes of convex bodies, or more generally, for intersection numbers of nef divisors on a projective variety.
For instance, given a Schubert polynomial $s_w(\partial)$ and a volume polynomial $f_C(x)$, any known inequality for the coefficients of a volume polynomial can be applied to 
$s_w(\partial) \circ f_C(x)$ 
to produce another inequality for the coefficients of $f_C(x)$, since every Schubert polynomial is a covolume polynomial \cite[Theorem 6]{SchurLogconcave}.\footnote{Proposition 2.6 in \cite{ross2025diagonalizations} states that  $s_w(\partial) \circ f_C(x)$ is a volume polynomial for any Schubert polynomial $s_w(\partial)$ and volume polynomial $f_C(x)$. The proof given there relies on the statement that the Schubert degeneracy class of $w$ associated to the sum of globally generated line bundles can be represented by an irreducible subvariety, which is not true in general and unknown when all the line bundles are ample. The main results of  \cite{ross2025diagonalizations} and the previous version of the current paper \cite{GS25} depend on this proposition. Our initial motivation for this study was to close this gap.}
For an overview of known inequalities for the coefficients of the volume polynomial, such as the Khovanskii--Teissier inequality or the reverse Khovanskii--Teissier inequality, see \cite{HuhMichalekWang}. 
\end{remark}

We now formulate a dual characterization of volume polynomials. 
The polynomial ring $\RR[x]$ acts on $\RR[\partial]$ as differential operators by the rule
\[
x^\beta \cdot \partial^{[\alpha]}\coloneq \begin{cases} \partial^{[\alpha-\beta]}& \text{if $\alpha \ge \beta,$}\\ \hfill 0\hfill & \text{if otherwise.}\end{cases}
\]
Despite its appearance, the following statement is qualitatively  different from  Theorem~\ref{thm:covolumecharacterization}. See Remark~\ref{ex:nvs2n} for a discussion of this asymmetry.

\begin{theorem}\label{thm:volumecharacterization}
The following conditions are equivalent for any polynomial $f \in \QQ[x]$.
\begin{enumerate}[(1)]\itemsep 5pt
\item The polynomial $f$ is a realizable volume polynomial over $k$.
\item For any realizable covolume polynomial $g$ over $k$, the polynomial $f(x) \cdot g(\partial)$ is a realizable covolume polynomial over $k$.
\end{enumerate}
\end{theorem}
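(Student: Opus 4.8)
I would prove the two implications separately.

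\emph{The implication $(1)\Rightarrow(2)$.} Suppose $f$ is a realizable volume polynomial over $k$, say $f=\lambda f_E$ for semiample divisors $E=(E_1,\dots,E_n)$ on a projective variety $W$ with $\dim W=d=\deg f$, and let $g$ be a realizable covolume polynomial over $k$. By Theorem~\ref{thm:covolumecharacterization}, applicable since $f(x)\cdot g(\partial)\in\QQ[\partial]$, the goal reduces to showing that $(f(x)\cdot g(\partial))(\partial)\circ p$ is a realizable volume polynomial over $k$ for every realizable volume polynomial $p$ over $k$. Writing out the two module actions from the definitions, the coefficient array of $(f(x)\cdot g(\partial))(\partial)\circ p$ is a ``twisted convolution'' of those of $f$, $g$, and $p$, carrying explicit binomial weights. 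The plan is to realize this polynomial as the volume polynomial, with respect to suitable semiample divisors, of a projective variety built from $W$, from a variety realizing $p$, and from a variety realizing $g$ — which, by the cone construction of Section~\ref{sec:volumecovolume}, may be taken to be a subvariety of a product of projective spaces carrying the restrictions of the hyperplane classes — by forming an incidence / fiber-product construction over an auxiliary product of projective spaces, in the spirit of Aluffi's construction behind Theorem~\ref{thm:covolumeproduct} and of the construction behind Theorem~\ref{lem:linmapAluffi}. The content is the verification that the intersection numbers on this variety reproduce the binomial weights in the twisted convolution; the projective bundle appearing in the cone construction is what supplies these combinatorial factors.

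\emph{The implication $(2)\Rightarrow(1)$.} Assume $f(x)\cdot g(\partial)$ is a realizable covolume polynomial over $k$ for every realizable covolume polynomial $g$ over $k$, and write $f=\sum_\beta c_\beta x^{[\beta]}$. Fix $\mu\in\ZZ^\infty_{\ge0}$ dominating the support of $f$. Since $\partial^\mu\circ x^{[\mu]}=x^{[0]}=1$, the monomial $\partial^\mu$ is a realizable covolume polynomial over $k$, and applying the hypothesis to $g=\partial^\mu$ and unwinding the definitions shows that
\[
(f(x)\cdot\partial^\mu)(\partial)\circ x^{[\mu]}=\sum_\beta\binom{\mu}{\beta}c_\beta\,x^{[\beta]},\qquad \binom{\mu}{\beta}=\prod_i\binom{\mu_i}{\beta_i},
\]
is a realizable volume polynomial over $k$ for every such $\mu$; in particular $f$ has nonnegative rational coefficients and $\mathrm{M}$-convex support. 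The substantive step is to descend from these binomially reweighted polynomials to $f$ itself. For this I would pass to the full multilinear polarization $\widehat{f}$ of $f$, a multilinear form in $n\deg f$ variables $x_i^{(j)}$ whose restriction along the diagonal $x_i^{(j)}=x_i$ equals $f$, and apply hypothesis $(2)$ for $f$ to the realizable covolume polynomial $g$ given by the class, on a suitable product of projective spaces, of an incidence variety that encodes cohomologically the symmetrization maps $(\PP^1)^{\deg f}\to\PP^{\deg f}$ in each block of variables. For this choice of $g$, testing $f(x)\cdot g(\partial)$ for the covolume property against the associated box should yield a nonnegative rational multiple of $\widehat{f}$; hence $\widehat{f}$ is a realizable volume polynomial over $k$, and so is $f=\widehat{f}|_{x_i^{(j)}=x_i}$, a nonnegative rational linear change of coordinates of $\widehat{f}$.

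\emph{The main obstacle.} I expect the hard part to be the final step of $(2)\Rightarrow(1)$: pinning down the correct realizable covolume polynomial $g$ and verifying the polarization identity $f(x)\cdot g(\partial)=(\text{scalar})\cdot\widehat{f}$ after passing to the box. This is precisely the point at which the ambient number of variables must be enlarged — with no counterpart in the proof of Theorem~\ref{thm:covolumecharacterization}, where the cone construction alone suffices — and accounts for the asymmetry recorded in Remark~\ref{ex:nvs2n}. A secondary technical point, in $(1)\Rightarrow(2)$, is to ensure that the incidence construction there can be carried out with semiample divisors and over the prescribed field $k$.
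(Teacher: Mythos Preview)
Your reduction at the start of $(1)\Rightarrow(2)$ via Theorem~\ref{thm:covolumecharacterization} is valid, but the ``incidence/fiber-product construction'' you sketch is not carried out, and it is not clear how the binomial weights would emerge from such a construction without essentially rebuilding the symbol machinery. The paper takes a cleaner route: it defines the linear operator $T\colon g^\vee \mapsto (f\cdot g)^\vee$ on $\RR[x]_{\le \mu}$, computes its symbol explicitly as
\[
\text{sym}_T(x,y) = \frac{1}{\mu!}\Big[f(x)(x+y)^\mu\Big]_{\le(\mu,\mu)},
\]
and observes that this is a realizable volume polynomial because products (Corollary~\ref{cor:prodofvol}) and upper truncations (Corollary~\ref{cor:lowerupper}) of realizable volume polynomials are again realizable. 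The symbol theorem (Theorem~\ref{thm:symbol}) then finishes the argument with no ad hoc geometry.

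For $(2)\Rightarrow(1)$, your first step (testing against $g=\partial^\mu$) is correct but, as you note, only yields the reweighted polynomial $\sum_\beta \binom{\mu}{\beta} c_\beta x^{[\beta]}$. Your proposed descent via polarization is a genuine gap: you have not identified the covolume polynomial $g$, nor verified the claimed identity, and it is not evident that an incidence class for the symmetrization map produces exactly the polarization after the action of $f$. The paper's argument here is both simpler and different in kind. It takes
\[
g = \sum_{\beta\le\mu} \partial^\beta\delta^{\mu-\beta} \in \RR[\partial,\delta],
\]
a realizable covolume polynomial in $2n$ variables (it factors into bivariate Lorentzian forms). Applying $f(x)\cdot(-)$ and then setting $\partial=0$ (a nonnegative linear specialization, legitimate by Theorem~\ref{lem:linmapAluffi}) yields $\sum_\alpha c_\alpha \delta^{\mu-\alpha} = f^\vee(\delta)$ on the nose, so $f^\vee$ is a realizable covolume polynomial and $f = f^\vee(\partial)\circ x^{[\mu]}$ is a realizable volume polynomial. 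No polarization, no symmetrization maps, no unknown scalar: the doubling of variables already isolates $f^\vee$ without any binomial distortion. This is the mechanism behind the asymmetry in Remark~\ref{ex:nvs2n} that you correctly anticipated.
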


The corresponding characterization of volume polynomials follows by taking limits.

\begin{corollary}\label{cor:volumecharacterization}
The following conditions are equivalent for any polynomial $f \in \RR[x]$.
\begin{enumerate}[(1)]\itemsep 5pt
\item The polynomial $f$ is a volume polynomial over $k$.
\item For any covolume polynomial $g$ over $k$, the polynomial $f(x) \cdot g(\partial)$ is a covolume polynomial over $k$.
\end{enumerate}
\end{corollary}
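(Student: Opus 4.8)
The plan is to deduce Corollary~\ref{cor:volumecharacterization} from Theorem~\ref{thm:volumecharacterization} by an approximation argument. Two facts are used throughout. First, on each finite-dimensional graded piece the set of volume polynomials over $k$ and the set of covolume polynomials over $k$ are closed: by definition they are the closures of the corresponding sets of realizable polynomials, which moreover have rational coefficients and are dense in them. Second, the pairing $(f,g)\mapsto f(x)\cdot g(\partial)$ is bilinear, hence continuous, on bounded-degree, bounded-variable subspaces; in particular $f\mapsto f(x)\cdot g(\partial)$ and $g\mapsto f(x)\cdot g(\partial)$ are continuous linear maps.

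For $(1)\Rightarrow(2)$, write $f=\lim_m f_m$ with each $f_m$ a realizable volume polynomial over $k$, and let $g$ be a covolume polynomial over $k$, say $g=\lim_j g_j$ with each $g_j$ a realizable covolume polynomial over $k$. By Theorem~\ref{thm:volumecharacterization}, each $f_m(x)\cdot g_j(\partial)$ is a realizable covolume polynomial over $k$. Given $\varepsilon>0$, continuity in the first argument lets us choose $m$ with $\|f_m(x)\cdot g(\partial)-f(x)\cdot g(\partial)\|<\varepsilon/2$, and then continuity in the second lets us choose $j$ with $\|f_m(x)\cdot g_j(\partial)-f_m(x)\cdot g(\partial)\|<\varepsilon/2$. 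Thus $f_m(x)\cdot g_j(\partial)$ is a realizable covolume polynomial within $\varepsilon$ of $f(x)\cdot g(\partial)$, so $f(x)\cdot g(\partial)$ lies in the closure of the realizable covolume polynomials over $k$, i.e.\ it is a covolume polynomial over $k$.

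The implication $(2)\Rightarrow(1)$ is the one carrying genuine content, being the limiting form of the hard direction of Theorem~\ref{thm:volumecharacterization}. Suppose $f\in\RR[x]$ satisfies $(2)$. Since every realizable covolume polynomial over $k$ is in particular a covolume polynomial over $k$, the hypothesis applies to all such $g$, so $f(x)\cdot g(\partial)$ is a covolume polynomial over $k$ for every realizable covolume polynomial $g$. The plan is then to re-run the proof of Theorem~\ref{thm:volumecharacterization}~$(2)\Rightarrow(1)$ for this $f$: the rationality assumption there should be needed only to sharpen the output from ``volume polynomial'' to ``realizable volume polynomial,'' whereas the underlying construction, which converts the contraction-stability of $f$ into geometric data realizing $f$, is insensitive to rationality. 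Carried out over $\RR$, and with the conclusion of each individual contraction relaxed from ``realizable covolume'' to ``covolume,'' it should produce a sequence of realizable volume polynomials converging to $f$, exhibiting $f$ as a volume polynomial over $k$; together with $(1)\Rightarrow(2)$ this identifies the set of $f$ satisfying $(2)$ with exactly the volume polynomials over $k$. I expect the main obstacle to be precisely this compatibility check: one must confirm that the construction behind Theorem~\ref{thm:volumecharacterization} is stable under passage to limits, so that the weaker hypothesis $(2)$ of the Corollary — producing for each contraction a covolume rather than a realizable covolume polynomial, and for $f$ itself a convergent sequence of realizations rather than a single one — still suffices to run the argument.
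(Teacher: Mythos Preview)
Your argument for $(1)\Rightarrow(2)$ is correct and is exactly what the paper means by ``follows by taking limits.''

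For $(2)\Rightarrow(1)$, however, you have only a plan, and your description of it is inaccurate. You speak of the construction behind Theorem~\ref{thm:volumecharacterization} as ``converting the contraction-stability of $f$ into geometric data realizing $f$'' and producing ``a sequence of realizable volume polynomials converging to $f$.'' That is not what the proof does, and framing it this way leaves you unsure whether the argument survives passage to limits. It does, but for a concrete reason you have not identified: the converse direction of Theorem~\ref{thm:volumecharacterization} uses the hypothesis for a \emph{single} test polynomial
\[
g_0(\partial,\delta)=\sum_{\beta\le\mu}\partial^\beta\delta^{\mu-\beta},
\]
which is already a realizable covolume polynomial. So your hypothesis $(2)$, applied just to $g_0$, gives that $f(x)\cdot g_0(\partial,\delta)$ is a covolume polynomial over $k$. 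Evaluating at $\partial=0$ is a nonnegative linear change of coordinates; by Theorem~\ref{lem:linmapAluffi} applied to each term of an approximating sequence of realizable covolume polynomials, this operation preserves the class of covolume polynomials. Hence $f^\vee(\delta)$ is a covolume polynomial, and therefore $f=f^\vee(\partial)\circ x^{[\mu]}$ is a volume polynomial (a covolume polynomial applied to $x^{[\mu]}$ is a limit of realizable volume polynomials by definition).

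The ``compatibility check'' you anticipated is thus not a delicate stability-under-limits issue at all: it is the observation that the proof of Theorem~\ref{thm:volumecharacterization} invokes the hypothesis only at one explicit, realizable $g_0$, and that the only other tool used (nonnegative linear specialization of covolume polynomials) has a straightforward limit version. You should state this rather than leave it as an expectation.
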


It follows formally from Theorem~\ref{thm:volumecharacterization} that the product of realizable volume polynomials over $k$ is a realizable volume polynomial over $k$, and that the product of volume polynomials over $k$ is a volume polynomial over $k$. These facts are straightforward to verify directly \cite[Lemma 2.4]{ross2025diagonalizations}, unlike the corresponding statements for covolume polynomials in Theorem~\ref{thm:covolumeproduct} and Corollary~\ref{cor:covolumeproduct}.
It is instructive to compare the statements in the simplest case of bivariate polynomials. In this case, the statement for covolume polynomials is equivalent to the assertion that the convolution of log-concave sequences without internal zeros is a log-concave sequence without internal zeros, and the statement for volume polynomials is equivalent to the assertion that the convolution of ultra-log-concave sequences without internal zeros is an ultra-log-concave sequence without internal zeros. Menon gives a direct argument for the former in \cite{Menon}, and Liggett gives a direct argument for the latter in \cite{Liggett}.

Using Theorem~\ref{thm:covolumecharacterization}, we derive a volume analogue of the symbol theorem for Lorentzian polynomials. The study of symbols of linear operators dates back to G\r{a}rding \cite{Garding} and appears prominently in the work of Borcea and Br\"and\'en on the P\'olya--Schur program \cite{BorceaBranden2,BorceaBranden1}.
Let $x=(x_1,x_2,\ldots)$ and $y=(y_1,y_2,\ldots)$ be two sets of variables, and let $T$ be a linear operator
\[
T: \RR[x]_{\le \mu} \longrightarrow \RR[y]_{\le \nu}.
\]
We suppose that $T$ is \emph{homogeneous}, that is, $\deg\, T(x^\alpha)-\deg\, x^\alpha \in \ZZ$ does not depend on $\alpha \le \mu$.

\begin{definition}
The \emph{symbol} of $T$ is the homogeneous polynomial
\[
\textrm{sym}_T(x,y)=\sum_{0 \le \alpha \le \mu}   T(x^{[\alpha]}) x^{[\mu-\alpha]}.
\]
\end{definition}

The symbol theorem for Lorentzian polynomials states that, if the symbol of $T$ is a Lorentzian polynomial, then $T$ sends Lorentzian polynomials to Lorentzian polynomials  \cite[Theorem 3.2]{BrandenHuh}.
We prove the parallel statement for realizable volume polynomials.

\begin{theorem}\label{thm:symbol}
If the symbol of $T$ is a realizable volume polynomial over $k$, then $T$ sends realizable volume polynomials over $k$ to realizable volume polynomials over $k$.
\end{theorem}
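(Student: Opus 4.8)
The plan is to reduce Theorem~\ref{thm:symbol} to Theorem~\ref{thm:covolumecharacterization} via an explicit formula writing $T(f)$ as a covolume operator applied to the symbol of $T$, followed by a specialization of variables. This parallels the proof of the Lorentzian symbol theorem \cite[Theorem 3.2]{BrandenHuh}, with Theorem~\ref{thm:covolumecharacterization} playing the role that closure of Lorentzian polynomials under partial differentiation plays there.

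First I would establish the \emph{symbol identity}. Recall that $T$ has domain $\RR[x]_{\le\mu}$, and let $f=\sum_{\beta\le\mu}c_\beta x^{[\beta]}$ be a realizable volume polynomial over $k$ lying in this domain; by Definition~\ref{def:volpoly} it has rational coefficients, so $f\in\QQ[x]_{\le\mu}$. Let $\tilde g\in\QQ[\partial]_{\le\mu}$ be the image of $f$ under the inverse of the isomorphism $\RR[\partial]_{\le\mu}\to\RR[x]_{\le\mu}$, i.e.\ the unique polynomial with $\tilde g(\partial)\circ x^{[\mu]}=f$; explicitly $\tilde g=\sum_{\beta\le\mu}c_\beta\,\partial^{\mu-\beta}$, which is homogeneous because $f$ is. Write $\tilde g(\partial_x)$ for the operator obtained by letting $\RR[\partial]$ act on the $x$-variables of $\RR[x,y]$, fixing the $y$-variables. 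A direct computation with the divided-power conventions shows that, for every $\beta\le\mu$,
\[
\Big(\partial^{\mu-\beta}_x\circ\textrm{sym}_T(x,y)\Big)\Big|_{x=0}=T\big(x^{[\beta]}\big)(y),
\]
since $\partial^{\mu-\beta}_x\circ\big(T(x^{[\alpha]})(y)\,x^{[\mu-\alpha]}\big)=T(x^{[\alpha]})(y)\,x^{[\beta-\alpha]}$ vanishes at $x=0$ unless $\alpha=\beta$. Forming the linear combination of these identities with coefficients $c_\beta$ gives
\[
T(f)(y)=\Big(\tilde g(\partial_x)\circ\textrm{sym}_T(x,y)\Big)\Big|_{x=0}.
\]

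Next I would identify the two inputs of this formula. Since $\tilde g(\partial)\circ x^{[\mu]}=f$ is a realizable volume polynomial over $k$, the polynomial $\tilde g$ is by definition a realizable covolume polynomial over $k$. The symbol $\textrm{sym}_T(x,y)$ is a realizable volume polynomial over $k$ by hypothesis; we regard it as an element of $\QQ[z]$ in the combined variable set $z=(x,y)$ and regard $\tilde g$ as an element of $\QQ[\partial]$, which involves only the $\partial$'s dual to the $x$-variables. Then Theorem~\ref{thm:covolumecharacterization} shows that $\tilde g(\partial_x)\circ\textrm{sym}_T(x,y)$ is a realizable volume polynomial over $k$. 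Finally, setting all $x$-variables equal to $0$ preserves realizable volume polynomials: if $h=\lambda\cdot\tfrac1{d!}\int_Y\big(\sum_i x_iD_i+\sum_j y_jE_j\big)^d$ with the $D_i,E_j$ semiample on a projective variety $Y$ and $\lambda\in\QQ_{\ge0}$, then $h|_{x=0}=\lambda\cdot\tfrac1{d!}\int_Y\big(\sum_j y_jE_j\big)^d$ is again a realizable volume polynomial over $k$ (the zero polynomial included, with $\lambda=0$). Applying this with $h=\tilde g(\partial_x)\circ\textrm{sym}_T(x,y)$ shows that $T(f)$ is a realizable volume polynomial over $k$, as desired.

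The computations are routine; the one place requiring care is the bookkeeping behind the symbol identity, in particular the check that the operator $\sum_{\beta\le\mu}c_\beta\,\partial^{\mu-\beta}_x$ extracting $T(f)$ from $\textrm{sym}_T$ is exactly $\tilde g(\partial_x)$ for the $\tilde g$ attached to $f$ by the duality isomorphism $\RR[\partial]_{\le\mu}\to\RR[x]_{\le\mu}$. It is precisely this identification that turns the hypothesis ``$f$ is a realizable volume polynomial'' into ``$\tilde g$ is a realizable covolume polynomial,'' making Theorem~\ref{thm:covolumecharacterization} applicable; with that in hand, no further obstacle remains.
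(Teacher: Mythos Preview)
Your proof is correct and uses the same essential ingredients as the paper's: Theorem~\ref{thm:covolumecharacterization} and the fact that setting variables to zero preserves realizable volume polynomials (Proposition~\ref{prop:linmap}). The organization is dual to the paper's, however. The paper introduces the \emph{cosymbol} $\cosym_T\in\RR[\partial,\delta]$, observes that it is a realizable covolume polynomial whenever $\textrm{sym}_T$ is a realizable volume polynomial, and proves the identity $T(f)=\big[\cosym_T\circ f(x)\,y^{[\nu]}\big]_{x=0}$; thus it turns the \emph{operator} $T$ into a covolume polynomial and applies it to the volume polynomial $f(x)y^{[\nu]}$. You instead turn the \emph{input} $f$ into a covolume polynomial $\tilde g$ and apply it to the volume polynomial $\textrm{sym}_T$. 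Your route is marginally more economical, since it avoids the auxiliary step (Corollary~\ref{cor:prodofvol}) needed to see that $f(x)y^{[\nu]}$ is a realizable volume polynomial; on the other hand, the paper's cosymbol formulation is reused verbatim in later arguments (e.g.\ the proof of Theorem~\ref{thm:volumecharacterization}), so packaging the identity in terms of $\cosym_T$ pays off downstream.
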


\begin{corollary}
If the symbol of $T$ is a volume polynomial over $k$, then $T$ sends volume polynomials over $k$ to volume polynomials over $k$.
\end{corollary}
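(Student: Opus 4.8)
The plan is to deduce this corollary from Theorem~\ref{thm:symbol} by passing to limits, in the same spirit in which Corollary~\ref{cor:covolumecharacterization} follows from Theorem~\ref{thm:covolumecharacterization}. What keeps this from being a one-line argument is that the symbol construction is defined relative to a fixed pair $(\mu,\nu)$, so a little care is needed in choosing the approximants.

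First I would record the elementary observation that, for a fixed degree offset, the assignment $T \mapsto \mathrm{sym}_T$ is a linear isomorphism from the space of homogeneous linear operators $\RR[x]_{\le\mu}\to\RR[y]_{\le\nu}$ onto the space of homogeneous polynomials of the corresponding degree in $\RR[x]_{\le\mu}\otimes_\RR\RR[y]_{\le\nu}$: writing $T(x^{[\alpha]})=\sum_{\beta\le\nu}c_{\alpha\beta}\,y^{[\beta]}$, one has $\mathrm{sym}_T=\sum_{\alpha\le\mu,\ \beta\le\nu}c_{\alpha\beta}\,x^{[\mu-\alpha]}y^{[\beta]}$, so the coefficients of $\mathrm{sym}_T$ are simply a relabeling of the matrix entries of $T$. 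In particular, for each fixed $f\in\RR[x]_{\le\mu}$ the evaluation $T\mapsto T(f)$ is linear in the coefficients of $\mathrm{sym}_T$, and the map $(\mathrm{sym}_T,f)\mapsto T(f)$ is jointly continuous on the finite-dimensional spaces involved.

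Now suppose $f$ is a volume polynomial over $k$ and $\mathrm{sym}_T$ is a volume polynomial over $k$. Write $f=\lim_\ell f_\ell$ and $\mathrm{sym}_T=\lim_m h_m$ with $f_\ell$ and $h_m$ realizable volume polynomials over $k$. Since realizable volume polynomials are homogeneous, since a term involving a spurious variable can be removed by setting that variable to $0$ (which preserves realizability, as $\tfrac{1}{d!}\int_Y(\sum_i x_iD_i)^d$ stays of the same form), and after discarding finitely many terms, I would arrange that all $f_\ell\in\RR[x]_{\le\mu}$ and all $h_m$ have the same degree as $\mathrm{sym}_T$. Granting for the moment that the $h_m$ may also be taken inside the box $\RR[x]_{\le\mu}\otimes\RR[y]_{\le\nu}$, let $T_m$ be the homogeneous operator with $\mathrm{sym}_{T_m}=h_m$. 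By Theorem~\ref{thm:symbol}, $T_m(f_\ell)$ is a realizable volume polynomial over $k$ for all $m,\ell$. Choosing $\ell(m)$ so that $T_m(f_{\ell(m)})$ lies within $1/m$ of $T_m(f)$ in any fixed norm on $\RR[y]_{\le\nu}$, joint continuity gives $T_m(f_{\ell(m)})\to T(f)$; hence $T(f)$ is a limit of realizable volume polynomials over $k$, i.e.\ a volume polynomial over $k$.

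The main obstacle is the parenthetical assumption that the approximating symbols $h_m$ can be chosen inside the box: a priori, a sequence of realizable volume polynomials converging to $\mathrm{sym}_T$ could have individual $x_i$- or $y_j$-degrees exceeding $\mu_i$ or $\nu_j$. To handle this I would either (i) show that the box-truncation operator, which keeps only the monomials $x^{[\gamma]}y^{[\beta]}$ with $\gamma\le\mu$ and $\beta\le\nu$, is itself realized by a covolume-type construction preserving volume polynomials, so that it may be applied to the $h_m$ while staying inside the class; or (ii) enlarge to a common box $(\mu',\nu')$ containing all the supports, use the cone construction underlying the $\mu$-independence of (co)volume polynomials noted after Definition~\ref{def:volpoly} to lift $T$ and the $T_m$ to operators on the larger spaces in a compatible way, apply Theorem~\ref{thm:symbol} there, and then descend. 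Once this point is settled, the limiting argument above completes the proof.
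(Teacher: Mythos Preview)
Your approach is correct in outline and matches what the paper's terse ``by taking limits'' actually requires. The box issue you single out is genuine and is precisely the only nontrivial point; however, you need not invent a new argument for it: your option~(i) is exactly Corollary~\ref{cor:lowerupper}, which shows that the upper truncation $h\mapsto h_{\le(\mu,\nu)}$ preserves \emph{realizable} volume polynomials over $k$. Applying it to the approximants $h_m$ yields realizable volume polynomials in the box that still converge to $\mathrm{sym}_T$, after which your limiting argument goes through verbatim. Note that the same box issue applies to the $f_\ell$: your sentence ``after discarding finitely many terms, I would arrange that all $f_\ell\in\RR[x]_{\le\mu}$'' handles spurious variables but not excess degree in a retained variable, and you should invoke Corollary~\ref{cor:lowerupper} there as well. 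Your option~(ii) also works (the cone construction shows the symbol relative to an enlarged $\mu'$ is again a realizable volume polynomial), but option~(i) is cleaner.

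An equivalent route, perhaps closer in spirit to how the paper treats its other corollaries, is to rerun the proof of Theorem~\ref{thm:symbol} with ``realizable'' removed throughout: use Corollary~\ref{cor:covolumecharacterization} in place of Theorem~\ref{thm:covolumecharacterization}, and the limit versions of Corollary~\ref{cor:prodofvol} and Proposition~\ref{prop:linmap}. The only step that is not an immediate substitution is passing from ``$\mathrm{sym}_T$ is a volume polynomial'' to ``$\mathrm{cosym}_T$ is a covolume polynomial,'' and that step again reduces to the same upper-truncation lemma. So both routes ultimately rest on Corollary~\ref{cor:lowerupper}.
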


We use Theorem~\ref{thm:symbol} to show that many familiar operators from the theory of Lorentzian polynomials preserve realizable volume polynomials over $k$ for any $k$:
\begin{enumerate}[--]\itemsep 5pt
\item The \emph{polarization operator} $\Pi^\uparrow$ preserves realizable volume polynomials over $k$ (Proposition~\ref{prop:polarization}).
\item For any nonnegative rational number $t$, the \emph{interlacing operator} $1+t x_i\partial_j$ preserves realizable volume polynomials over $k$ (Proposition~\ref{prop:interlacing}).
\item For any nonnegative rational number $t$, the \emph{symmetric exclusion process} $\Phi^{1,2}_t$ preserves realizable multiaffine volume polynomials over $k$ (Proposition~\ref{prop:symexpro}).
\end{enumerate}
Theorem~\ref{thm:symbol} also yields, for example, the following generalizations of the corresponding statements from \cite{ross2025diagonalizations} for volume polynomials over $k$:
\begin{enumerate}[--]\itemsep 5pt
\item If $f$ is a realizable volume polynomial over $k$, then the lower truncation $f_{\ge \gamma}$ and the upper truncation $f_{\le \gamma}$ are realizable volume polynomials over $k$ (Corollary~\ref{cor:lowerupper}).
\item If $f$ is a realizable volume polynomial over $k$, then the normalization $N(f)$ is a realizable volume polynomial over $k$ (Proposition~\ref{prop:normisvol}).
\item If $N(f_1)$ and $N(f_2)$ are realizable volume polynomials over $k$, then $N(f_1f_2)$ is a realizable volume polynomial over $k$ (Corollary~\ref{cor:prodofdenomisvol}).
\end{enumerate}
The  statements for volume polynomials follow from taking limits.

\begin{remark}
In the literature on volume and covolume polynomials, it is customary to work over an algebraically closed field. However, for any field $k$, the  notions of volume and covolume polynomials over $k$ coincide with those over its algebraic closure $\overline{k}$, see Proposition~\ref{prop:ArbitraryField0}. Therefore, all the results above remain valid over any field $k$.
\end{remark}

\subsection*{Acknowledgements}
The authors thank Paolo Aluffi, Petter Br\"and\'en, Matt Larson and Jonathan Monta\~no for their insightful comments. 
Lukas Grund and Hendrik S\"uss acknowledge support from DFG grant 539864509.
June Huh is partially supported by the Oswald Veblen Fund and the Simons Investigator Grant.
 Mateusz Micha\l ek  is partially supported by the Charles Simonyi Endowment. 
 Botong Wang is partially supported by the NSF grant DMS-1926686.

\section{Volume and covolume polynomials}\label{sec:volumecovolume}

Let $f=\sum_\alpha c_\alpha x^{[\alpha]}$ be a nonzero degree $d$ realizable volume polynomial over $k$ in $\QQ[x_1,\ldots,x_n]$.
We use the following construction throughout the paper and refer to it as the \emph{basic construction}. 
By definition, there is a $d$-dimensional projective variety $Y$ over $k$, a collection of semiample divisors $D=(D_1,\ldots,D_n)$ on $Y$, and a positive rational number $\lambda$ such that
\[
f=\lambda f_D, \ \ \text{where} \ \ f_D=\frac{1}{d!}\int_Y \bigg(\sum_{i=1}^n x_i D_i\bigg)^d.
\]
Choose a positive integer $m$ such that $mD_i$ is a basepoint-free divisor for every $i$.
 The collection $mD$ of basepoint-free divisors on $Y$ defines a map to the product of projective spaces
\[
Y \longrightarrow \mathbb{P}^\mu  \coloneq \prod_{i=1}^n \mathbb{P}\mathrm{H}^0(Y,\mathcal{O}_Y(D_i))^\vee.
\]
The Chow group of $\PP^\mu$ is the free abelian group
\[
\CH(\PP^\mu)=\bigoplus_{\alpha \le \mu} \ZZ \cdot [\PP^\alpha]= \bigoplus_{\alpha \le \mu} \ZZ \cdot h^{\mu-\alpha} \cap [\PP^\mu],
\]
where $[\PP^\alpha]$ is the class of the product of linear spaces of the form $\prod_{i=1}^n \mathbb{P}^{\alpha_i}$ and $h_i$ is the pullback of the hyperplane class from the $i$-th factor $\mathbb{P}^{\mu_i}$. 
Let $X$ be the image of $Y$ in $\PP^\mu$. 
By the projection formula, there is a positive rational number $\lambda$ such that
\[
\lambda [X]=\sum_{\alpha \le \mu} c_\alpha [\PP^\alpha]=\sum_{\alpha \le \mu} c_\alpha h^{\mu-\alpha} \cap [\PP^\mu],
\]
where $\lambda$ is determined  by the number $m$ and the degree of $Y \to X$.
The volume polynomial of $X$ with respect to $h$ is $\lambda^{-1}f$.
We make the following observations:
\begin{enumerate}[(1)]\itemsep 5pt
\item For $\nu\in \ZZ^n_{\ge 0}$, the equations satisfied by $X$ in $\PP^\mu$ define a subvariety $X^\nu$ of $\PP^{\mu+\nu}$ such that
\[
\lambda[X^\nu]=\sum_{\alpha \le \mu} c_\alpha [\PP^{\alpha+\nu}]=\sum_{\alpha \le \mu} c_\alpha h^{\mu-\alpha} \cap [\PP^{\mu+\nu}].
\]
Thus $f^{\nu}\coloneq \sum_\alpha c_\alpha x^{[\alpha+\nu]}$ is a realizable volume polynomial over $k$ for any $\nu$. 
The cycles $X$ and $X^\nu$ correspond to the same realizable covolume polynomial
$g(\partial)\coloneq \sum_\alpha c_\alpha \partial^{\mu-\alpha}$.
\item For $\nu\in \ZZ^n_{\ge 0}$, choose a linear embedding $\PP^\mu \to \PP^{\mu+\nu}$. The image $X_\nu$ of $X$ satisfies
\[
\lambda [X_\nu]=\sum_{\alpha \le \mu} c_\alpha [\PP^\alpha]=\sum_{\alpha \le \mu} c_\alpha h^{\mu+\nu-\alpha}\cap [\PP^{\mu+\nu}].
\]
Thus $g^\nu\coloneq \sum_{\alpha} c_\alpha \partial^{\mu+\nu}$ is a realizable covolume polynomial over $k$ for any $\nu$. The cycles $X$ and $X_\nu$ correspond to the same realizable volume polynomial $f(x)=\sum_\alpha c_\alpha x^{[\alpha]}$.
\end{enumerate}

We now prove the main technical lemma of this paper.
Let $X$ be a complete homogeneous variety over $k$, that is, a complete variety with a transitive action of a connected algebraic group $G$.
For a rational point $g$ of $G$ and a morphism $\varphi: Y\to X$, we write $gY \to X$ for the composition $g \circ \varphi$.

\begin{lemma}\label{lem:HomogeneousIntersection}
Let $\varphi: Y \to X$ and $\psi:Z \to X$ be proper morphisms over $k$ from irreducible varieties $Y$ and $Z$ to a complete homogeneous variety $X$. Then, for a general point $g$ of $G(k)$, the irreducible components of the fiber product
        \[
        \xymatrix{
        gY\times_X Z\ar[r]\ar[d]&Z\ar[d]\\
        gY\ar[r] &X
        }
        \]
are algebraically equivalent to each other in $Y \times Z$.
\end{lemma}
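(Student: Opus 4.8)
The plan is to use a Kleiman-type transversality argument adapted to the setting of proper morphisms (not just subvarieties) and algebraic equivalence (not just rational equivalence or transversality). The key point is that $X$ is homogeneous under a connected group $G$, so the translates $gY$ for varying $g \in G(k)$ form an algebraic family, and translating $Y$ by a general group element puts $gY \to X$ and $Z \to X$ into general position relative to each other.

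First I would set up the incidence construction: form the product $G \times Y \times_X Z$ where the map $G \times Y \to X$ is $(g,y) \mapsto g\cdot\varphi(y)$ and we fiber over $\psi: Z \to X$. Since the action $G \times X \to X$ is smooth (it is a torsor-like map: $G \times X \to X \times X$, $(g,x) \mapsto (gx, x)$ is smooth because it is a base change of the smooth map $G \to X$, $g \mapsto gx_0$, by transitivity and homogeneity), the composite $G \times Y \to X$ is smooth over $X$ after the translation is incorporated. More precisely, the map $G \times Y \to X \times X$ given by $(g,y)\mapsto (g\varphi(y), \varphi(y))$ is smooth, hence $\mathcal{W} := (G\times Y) \times_X Z$ (fiber product over the first $X$-coordinate, with $Z$ mapping to it) is irreducible: it is smooth over $Z$ (base change of a smooth map), and $Z$ is irreducible, and the fibers of $\mathcal W \to Z$ are irreducible since they are $G$-bundle-like (each fiber over $z$ is $\{(g,y) : g\varphi(y) = \psi(z)\} \cong$ something connected — actually it fibers over $Y$ with fibers cosets in $G$, so it is irreducible as $G$ is connected and $Y$ is irreducible). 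Then $\mathcal W$ is irreducible, and it comes with a projection $p: \mathcal W \to G$.

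Now the generic fiber of $p: \mathcal W \to G$ is irreducible (being the generic fiber of a dominant morphism from an irreducible variety), and by a standard constructibility/spreading-out argument there is a dense open $U \subseteq G$ such that $p^{-1}(U) \to U$ has all fibers irreducible of the same dimension — wait, that is too strong; the fiber $p^{-1}(g) = gY \times_X Z$ need not be irreducible. Instead I would argue: the components of $p^{-1}(g)$ for $g \in U$ are all "specializations of the generic fiber" and hence all algebraically equivalent in $Y \times Z$. Concretely, $\mathcal W \hookrightarrow G \times Y \times Z$ is a closed subscheme; take the image $\mathcal W' \subseteq G \times Y \times Z$ of (the relevant components of) $\mathcal W$; over a dense open $U \subseteq G$ this family $\mathcal W'_U \to U$ is flat (generic flatness) with $U$ irreducible, so for any two points $g_1, g_2 \in U$ the cycles $[\mathcal W'_{g_1}]$ and $[\mathcal W'_{g_2}]$ in $Y \times Z$ are algebraically equivalent (connecting them through $U$, which is irreducible). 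In particular, picking $g_1 = g$ a general point and $g_2$ another general point in the same orbit under replacing $Y$ by $gY$, one deduces that the distinct irreducible components of a single general fiber $gY \times_X Z$ are algebraically equivalent to one another — this uses that for general $g$ the group $G$ acts transitively enough on the set of components, or more directly: the generic fiber of $p$ over the function field $k(G)$ has some set of geometric components which are permuted by $\operatorname{Gal}$, and for a general closed point these specialize; but to get the components of one fixed general fiber to be algebraically equivalent to each other, observe that $\operatorname{PGL}$-type homogeneity lets us move one component to another within $Y \times Z$ while staying in the family.

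**The main obstacle** I anticipate is precisely this last point: getting the irreducible components of a \emph{single} general fiber to be algebraically equivalent \emph{to each other} (rather than each being algebraically equivalent to "the generic fiber"). The clean way is: let $\widetilde{\mathcal W} \to \mathcal W$ be a component, $q: \widetilde{\mathcal W} \to G$ the projection; over a dense open $U$, $q$ is flat and all fibers are irreducible (by generic irreducibility of fibers of a dominant map with irreducible generic fiber, applied to each component of $\mathcal W$ separately), so each component of $\mathcal W$ contributes exactly one component to $p^{-1}(g)$ for $g \in U$, and these are algebraically equivalent across different $g \in U$. Then to compare two components $W_1, W_2$ of $\mathcal W$: they are both dominant over $G$ and over $X$; I would use that $\mathcal W$ is irreducible to conclude it has only \emph{one} component, which forces $p^{-1}(g)$ to be irreducible for general $g$ — in which case the statement is trivial — \emph{unless} the map $p$ is not dominant or has reducible generic fiber, in which case one passes to a finite cover of $G$ splitting the components and argues there, still using connectedness of $G$ to connect the fibers. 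I would need to verify carefully that $\mathcal W$ is irreducible under the stated hypotheses (this is where smoothness of the $G$-action and connectedness of $G$ enter decisively), and handle the degenerate cases where $\varphi(Y)$ and $\psi(Z)$ are positioned so that the fiber product is empty or lower-dimensional than expected — but emptiness makes the claim vacuous, and the dimension count is not actually needed for the \emph{algebraic equivalence} conclusion, only irreducibility of the ambient family.
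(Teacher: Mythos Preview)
Your setup is essentially the paper's: form the incidence variety $\mathscr{U}=\{(g,y,z):g\varphi(y)=\psi(z)\}\subset G\times Y\times Z$, show it is irreducible, and project to $G$. Your irreducibility argument is correct in spirit; the paper streamlines it by observing that the projection $\mathscr{U}\to Y\times Z$ is a base change of $\{gx_1=x_2\}\to X\times X$, which is flat with fibers isomorphic to the connected stabilizer $P$ (here one needs the structure theorem $X\simeq G/P$ with $P$ connected, a reduction you do not make explicit).

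The genuine gap is the final step. You write that irreducibility of $\mathscr{U}$ ``forces $p^{-1}(g)$ to be irreducible for general $g$''---this is false. An irreducible total space over an irreducible base can have reducible general fibers; indeed the paper's own Example~2.2 exhibits exactly this behavior for the lemma at hand (general translates of $Y$ meet $Z$ in two disjoint $\mathbb{P}^1$'s). Your fallback suggestions (Galois permutation of geometric components, passing to a finite cover of $G$) are gestures rather than arguments, and it is not clear how connectedness of $G$ alone would let you ``connect the fibers'' once the components are split apart.

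The paper closes this gap by a different mechanism: it compactifies $G$ to $\overline{G}$, takes the closure $\mathscr{V}$ of $\mathscr{U}$ in $\overline{G}\times Y\times Z$ so that the projection $p:\mathscr{V}\to\overline{G}$ becomes \emph{proper}, and then invokes a black-box lemma (\cite[Lemma~2.6]{HHMWW}): for any proper surjective morphism $V\to W$ between irreducible varieties, the irreducible components of a general fiber are algebraically equivalent to each other in $V$. Proper pushforward to $Y\times Z$ finishes. You are missing both the compactification step (needed for properness) and this key lemma; without them the argument does not close.
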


When $\varphi$ and $\psi$ are closed immersions, we have $gY \times_X Z \simeq gY \cap Z$. In this case, Lemma~\ref{lem:HomogeneousIntersection} says that, for general $g$ in $G(k)$,  the irreducible components of the intersection are algebraically equivalent to each other in $Y$ and in $Z$, and hence in $X$. See Examples~\ref{ex:disconnected} and ~\ref{ex:singular} for irreducible $Y$ and irreducible $Z$ in $X$ such that the intersection of $Z$ with a general translate of $Y$ is disconnected and has positive dimension.

\begin{proof}
By \cite[Theorem 5.2]{CompleteHomogeneousVarieties}, the complete homogeneous variety $X$ must be of the form $A\times H/P$, where $A$ is an abelian variety, $H$ is a connected affine algebraic group, and $P$ is a parabolic subgroup of $H$. Since any parabolic subgroup is connected \cite[Corollary~6.4.10]{SpringerLAG}, we may assume that $X\simeq G/P$, where $G$ and $P$ are connected algebraic groups.
We consider the fiber squares
\[
\xymatrix{
\mathscr{U}_{Y,Z}\coloneq \{g \varphi(y)=\psi(z)\}  \ar[r]\ar[d]& \ar[d]^{\text{id} \times \phi \times \psi} G \times Y \times Z \ar[r] & Y \times Z \ar[d]^{\phi \times \psi}\\
 \mathscr{U}_{X,X} \coloneq \{gx_1=x_2\} \ar[r] &G \times X \times X \ar[r] & X \times X,
}
\]
where the right horizontal arrows are the projections and the left horizontal arrows are the closed immersions.
By generic flatness and the transitivity of the $G$-action on $X$, we see that the projection
\[
\pi_{X,X}:\mathscr{U}_{X,X} \longrightarrow X \times X
\]
 is a flat morphism whose fibers over closed points are isomorphic to $P$, which is irreducible by our assumption. Flatness is preserved under pullbacks, so the projection
 \[
\pi_{Y,Z}:\mathscr{U}_{Y,Z} \longrightarrow Y \times Z
 \]
shares the same properties.
Since $\pi_{Y,Z}$ is open and surjective, $\pi_{Y,Z}^{-1}(y \times z)$ is irreducible for every closed point $y \times z$, and $Y \times Z$ is irreducible,   it follows that $\mathscr{U}_{Y,Z}$ is irreducible \cite[Lemma 5.8.14]{stacks-project}.

Since $G$ is quasi-projective \cite[Lemma 39.8.7]{stacks-project}, we may take its projective compactification $\overline{G}$.
Let $\mathscr{V}_{Y,Z}$ be the closure of $\mathscr{U}_{Y,Z}$ in $\overline{G}\times Y\times Z$, and consider the projection
\[
p: \mathscr{V}_{Y,Z} \longrightarrow \overline{G}.
\]
Since $\mathscr{V}_{Y,Z}$ is irreducible, 
we may apply the following lemma from \cite[Lemma 2.6]{HHMWW} to the reduced induced scheme of $\mathscr{V}_{Y,Z}$:
\begin{quote}
\emph{Let $f:V \to W$ be a proper surjective morphism between irreducible varieties over $k$.
Then the irreducible components of a general fiber of $f$ are algebraically equivalent to each other in $V$.}
\end{quote}
Therefore, for general $g \in G(k)$, the irreducible components of the fiber 
\[
p^{-1}(g) \simeq gY \times_X Z
\]
are algebraically equivalent to each other in $\mathscr{V}_{Y,Z}$.
Since algebraic equivalence is preserved under proper pushforwards, the irreducible components of $p^{-1}(g)$ are algebraically equivalent in $Y \times Z$.
\end{proof}

The following example is a modification of \cite[Example 29]{KollarLandesman}.

\begin{example}\label{ex:disconnected}
Let $G$ be the general linear group $\mathrm{GL}_5$ and let $X$ be the Grassmannian $\textrm{Gr}(2,5)$. 
For any one-dimensional subspace $F_1$ of $k^5$ and three-dimensional subspace $F_3$ of $k^5$, we consider the Schubert varieties
    \begin{align*}
    &Y(F_1)\coloneq Y =\{V \in \textrm{Gr}(2,5) \, | \, \text{$V$ contains $F_1$}\} \simeq \PP^3, \\
    &S(F_3) \coloneq \{V \in \textrm{Gr}(2,5) \, | \, \text{$V$ is contained in $F_3$}\} \simeq \PP^2.
    \end{align*}
The intersection of $Y(F_1)$ and $S(F_3)$ is either $\PP^1$ or empty, depending on whether $F_1$ is in $F_3$ or not. 
Let $H$ be the intersection of four general hyperplanes in $\textrm{Gr}(3,5) \subseteq \PP^9$. This is an irreducible surface in the parameter space for $F_3$.
We consider the irreducible fourfold
\[
Z \coloneq \bigcup_{F_3 \in H} S(F_3) \subseteq X.
\]
From the classical Schubert calculus, we know that there are precisely two $F_3 \in H$ that contain a given general $F_1$.
Thus, a general translate of $Y$ meets $Z$ in two disjoint copies of $\PP^1$ in $\textrm{Gr}(2,5)$.
Lemma~\ref{lem:HomogeneousIntersection} says that these two copies of $\PP^1$ are algebraically equivalent in $Y$ and in $Z$.
\end{example}

The following example illustrates Lemma~\ref{lem:HomogeneousIntersection} when the intersection of $Z$ with a general translate of $Y$ is not smooth.

\begin{example}\label{ex:singular}
Let $k$ be an algebraically closed field of odd characteristic $p$, and let $\iota$ be  the product of the identity map on $\PP^1$ with the Segre embedding
\[
\iota: \PP^1 \times  \PP^1 \times \PP^1 \longrightarrow \PP^1 \times \PP^3, \ \ (a_0,a_1) \times (b_0,b_1) \times (c_0,c_1) \longmapsto (a_0,a_1) \times (b_0c_0,b_0c_1,b_1c_0,b_1c_1).
\]
We consider a smooth threefold $Y$ and a smooth surface $Z$ in $X \coloneq \PP^1 \times \PP^3$ defined by
\[
Y=\Big\{a_0=0\Big\} \ \ \text{and} \ \ 
Z=\iota\Big\{
a_0b_0^{2p}+a_1b_1^{2p}=0\Big\}. 
\]
The intersection of $Z$ with a general translate of $Y$ in $X$ is the disjoint union of two copies of $\PP^1$, each with multiplicity $p$. The two connected components are algebraically equivalent in $Y$ and in $Z$.
\end{example}

We are ready to show that realizable covolume polynomials are precisely the polynomial differential operators that preserve realizable volume polynomials.

\begin{proof}[Proof of Theorem~\ref{thm:covolumecharacterization}] 
We first show that realizable covolume polynomials preserve realizable volume polynomials. We write
\[
f(x)=\sum_\alpha c_\alpha x^{[\alpha]}, \quad  g(\partial)=\sum_\beta d_\beta \partial^\beta, \quad \text{and} \quad g(\partial) \circ f(x)=\sum_\alpha e_\alpha x^{[\alpha]}.
\]
Suppose $f$ is a realizable volume polynomial over $k$ and $g$ is a realizable covolume polynomial over $k$. By the \emph{basic construction}, we may choose positive rational numbers $\lambda_1$ and $\lambda_2$, a nonnegative integral vector $\mu$ satisfying
$
f \in \QQ[x]_{\le \mu}$ and $g \in \QQ[\partial]_{\le \mu}$,
and subvarieties $Y$ and $Z$ of the product of projective spaces $\PP^ \mu$ satisfying
\[
\lambda_1[Y]=\sum_\alpha c_\alpha [\PP^\alpha] \quad \text{and} \quad \lambda_2[Z]=\sum_\beta d_\beta h^\beta \cap [\PP^\mu].
\]
By \cite[Theorem 2]{Kleiman74}, the intersection of $gY$ and $Z$ is equidimensional and has the expected dimension for a general $g$ in the automorphism group of $\PP^\mu$, and hence 
\[
\lambda_1 \lambda_2 [gY \cap Z]=\sum_\alpha e_\alpha [\PP^\alpha].
\]
By Lemma~\ref{lem:HomogeneousIntersection}, any irreducible component of $V$ of $gY \cap Z$ satisfies $\lambda_1 \lambda_2 \lambda_3 [V]=\sum_\alpha e_\alpha [\PP^\alpha]$ for some positive rational number $\lambda_3$. It follows that  $g(\partial) \circ f(x)$ is a volume polynomial over $k$.

For the converse, choose an integral vector $\mu$ satisfying $g \in \QQ[\partial]_{\le \mu}$. Since  $x^{[\mu]}$ is a realizable volume polynomial over $k$, our hypothesis on $g$ implies that $g(\partial) \circ x^{[\mu]}$ is a realizable volume polynomial over $k$. This means that $g$ is a realizable covolume polynomial over $k$, by definition.
\end{proof}

We collect several consequences of Theorem~\ref{thm:covolumecharacterization}. 

\begin{corollary}\label{cor:Schubert acts on volume}
    If $g$ is a Schubert polynomial  and $f$ is a realizable volume polynomial over $k$, then $g({\partial})\circ f({x})$ is a realizable   volume polynomial over $k$.
\end{corollary}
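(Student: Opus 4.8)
The plan is to combine Theorem~\ref{thm:covolumecharacterization} with the known fact that every Schubert polynomial is a realizable covolume polynomial. First I would recall the precise statement needed: by \cite[Theorem 6]{SchurLogconcave}, every Schubert polynomial $s_w$ is a covolume polynomial, and in fact a \emph{realizable} covolume polynomial over any field $k$ — this is because Schubert polynomials arise geometrically as classes of Schubert varieties in flag bundles, which are genuine irreducible subvarieties defined over the prime field, so no limiting procedure is needed. (If one only has the limiting statement in the literature, one should instead invoke the geometric construction directly: the Schubert polynomial $s_w(\partial)\circ x^{[\mu]}$, for $\mu$ large enough, is the multidegree of a Schubert variety inside a product of projective spaces, obtained by intersecting the relevant flag variety with the torus-fixed data, hence a realizable volume polynomial.)

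The main step is then a one-line application of Theorem~\ref{thm:covolumecharacterization}: since $g = s_w$ is a realizable covolume polynomial over $k$ — condition (1) of that theorem — condition (2) gives that for any realizable volume polynomial $f$ over $k$, the polynomial $g(\partial)\circ f(x)$ is again a realizable volume polynomial over $k$. That is exactly the assertion of the corollary. Strictly speaking one should note that Theorem~\ref{thm:covolumecharacterization} is stated for $g \in \QQ[\partial]$, and Schubert polynomials have nonnegative integer (in particular rational) coefficients, so the hypothesis is met without modification.

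The only genuine content to verify is therefore the input claim that Schubert polynomials are realizable covolume polynomials, and the expected obstacle is purely bookkeeping: one must check that the cited log-concavity/positivity results produce realizability over an \emph{arbitrary} field $k$ rather than only over $\CC$. This follows because the flag variety and its Schubert cells are defined over $\ZZ$, so the relevant intersection-theoretic identities hold after base change to any $k$; equivalently, one can cite the degeneration of Schubert varieties to unions of coordinate subspaces (Gröbner degeneration) which is characteristic-free. Once this is in hand, the corollary is immediate, and by taking limits one likewise obtains the statement for (not necessarily realizable) volume polynomials.
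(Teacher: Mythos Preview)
Your proposal is correct and follows exactly the paper's approach: cite \cite[Theorem 6]{SchurLogconcave} to get that Schubert polynomials are realizable covolume polynomials over $k$, then apply Theorem~\ref{thm:covolumecharacterization}. The extra discussion you include about characteristic-independence is reasonable due diligence, but the paper simply takes the cited result as given.
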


\begin{proof}
By \cite[Theorem 6]{SchurLogconcave}, a Schubert polynomial is a realizable covolume polynomial over $k$. The conclusion follows from Theorem~\ref{thm:covolumecharacterization}.
\end{proof}

\begin{remark}
The authors of \cite{Equivariant} show more generally that any double Richardson polynomial  is a realizable covolume polynomial over $k$. Thus, these polynomials preserve realizable volume polynomials over $k$. 
\end{remark}

It formally follows from Theorem~\ref{thm:covolumecharacterization} that the product of realizable covolume polynomials over $k$ is a realizable covolume polynomial over $k$. We state here a related property of realizable volume polynomials in terms of the \emph{normalization operator} $N$ defined by
\[
N(x^\alpha)=x^{[\alpha]} \ \ \text{for all $\alpha \in \ZZ^\infty_{\ge 0}$.}
\]
The following statement sharpens \cite[Corollary~2.9]{ross2025diagonalizations}. See \cite[Corollary 3.8]{BrandenHuh} for the corresponding statement for Lorentzian polynomials.

\begin{corollary}
  \label{cor:prodofdenomisvol}
  If $N(f_1)$ and $N(f_2)$ are realizable volume polynomials over $k$, then  $N(f_1f_2)$ is a realizable volume polynomial over $k$.
\end{corollary}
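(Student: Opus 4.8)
The plan is to reduce the statement for $N(f_1 f_2)$ to the product statement for realizable covolume polynomials, which follows formally from Theorem~\ref{thm:covolumecharacterization}. The key observation is that the normalization operator $N$ and the differential-operator action $\circ$ are related by a substitution of variables: if $g \in \QQ[\partial]_{\le \mu}$ with $g(\partial) = \sum_\beta d_\beta \partial^\beta$, then $g(\partial) \circ x^{[\mu]} = \sum_\beta d_\beta x^{[\mu - \beta]}$, which up to reversing the exponent vector via $\beta \mapsto \mu - \beta$ is exactly $N$ applied to the "reversed" polynomial $\sum_\beta d_\beta x^{\mu - \beta}$. Thus, for a fixed common bound $\mu$, the operators "$N(\,\cdot\,)$ on polynomials supported in $[0,\mu]$" and "$g \mapsto g(\partial)\circ x^{[\mu]}$" are intertwined by the linear reversal involution $R_\mu : x^\alpha \mapsto x^{\mu - \alpha}$.

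Concretely, I would proceed as follows. First, choose nonnegative integral vectors $\mu_1, \mu_2$ with $f_i$ supported in $[0,\mu_i]$, and set $\mu = \mu_1 + \mu_2$; note $f_1 f_2$ is supported in $[0,\mu]$. Writing $g_i(\partial) = R_{\mu_i}(f_i)$ evaluated at $\partial$ (that is, $g_i(\partial) = \sum_\alpha [f_i]_\alpha \, \partial^{\mu_i - \alpha}$), one checks directly that $g_i(\partial) \circ x^{[\mu_i]} = N(f_i)$, so the hypothesis that $N(f_i)$ is a realizable volume polynomial over $k$ says exactly that $g_i$ is a realizable covolume polynomial over $k$. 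Second, by Theorem~\ref{thm:covolumecharacterization} (or the product statement derived from it, Theorem~\ref{thm:covolumeproduct}), $g_1 g_2$ is a realizable covolume polynomial over $k$, so $(g_1 g_2)(\partial) \circ x^{[\mu]}$ is a realizable volume polynomial over $k$. Third, I would carry out the elementary bookkeeping identity
\[
(g_1 g_2)(\partial) \circ x^{[\mu]} = N\big(R_\mu(g_1 g_2)\big) = N(f_1 f_2),
\]
where the last equality uses that $R_\mu(g_1 g_2) = R_{\mu_1}(g_1)\, R_{\mu_2}(g_2)$ up to the variable-reversal, which unwinds to $f_1 f_2$ because reversing the exponents of $g_i$ with respect to $\mu_i$ returns $f_i$. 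This gives the conclusion.

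The only genuinely delicate point is the third step: matching up the exponent-vector arithmetic so that the reversal $R_{\mu_1} \otimes R_{\mu_2}$ on the product of the $g_i$'s, followed by the $\circ x^{[\mu]}$ action, reproduces exactly $N(f_1 f_2)$ — in particular that the coefficient of $x^{[\gamma]}$ in $(g_1g_2)(\partial)\circ x^{[\mu]}$ equals the coefficient of $x^\gamma$ in $f_1 f_2$, with the normalized-monomial factorials tracked correctly. This is a routine but error-prone index computation; the structural input (Theorem~\ref{thm:covolumecharacterization}) does all the real work. It is also worth remarking that this is precisely the abstract content of the bivariate special case discussed in the introduction, namely that the convolution of log-concave sequences without internal zeros is again log-concave without internal zeros, transported through $N$ to the ultra-log-concave statement.
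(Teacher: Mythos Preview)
Your proposal is correct and follows essentially the same route as the paper: define $g_i$ by reversing exponents of $f_i$ relative to a bound so that $g_i(\partial)\circ x^{[\mu_i]}=N(f_i)$, invoke Theorem~\ref{thm:covolumeproduct} to get that $g_1g_2$ is a realizable covolume polynomial, and then observe $(g_1g_2)(\partial)\circ x^{[\mu_1+\mu_2]}=N(f_1f_2)$. The paper's version is cosmetically simpler in that it picks a single common $\mu$ with $f_1,f_2\in\RR[x]_{\le\mu}$ and then applies $g_1g_2$ to $x^{[2\mu]}$; also, the ``delicate'' third step you flag is in fact the one-line computation $\sum_{\alpha,\beta}c_\alpha d_\beta\,\partial^{\mu-(\alpha+\beta)}\circ x^{[\mu]}=\sum_{\alpha,\beta}c_\alpha d_\beta\, x^{[\alpha+\beta]}=N(f_1f_2)$, so there is no need for the $R_\mu$ formalism.
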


\begin{proof}
Choose $\mu$ such that $f_1,f_2 \in \RR[x]_{\le \mu}$. We define polynomials $g_1,g_2\in \RR[\partial]$ by
\[
f_1=\sum_\alpha c_\alpha x^{\alpha}, \ \  f_2=\sum_\alpha d_\alpha x^{\alpha}, \ \ 
g_1=\sum_\alpha c_\alpha \partial^{\mu-\alpha}, \ \ g_2=\sum_\alpha d_\alpha \partial^{\mu-\alpha}.
\]
Since $N(f_1)$ and $N(f_2)$ are realizable volume polynomials over $k$, $g_1$ and $g_2$ are realizable covolume polynomials over $k$. It follows from Theorem~\ref{thm:covolumeproduct} that
the product $g_1g_2$ is a realizable covolume polynomial over $k$. It follows that
\[
N(f_1f_2)=
g_1g_2 \circ x^{[2\mu]}
\]
is a realizable volume polynomial over $k$.
\end{proof}

The following statement recovers a result of Aluffi that any nonnegative linear change of coordinates preserves covolume polynomials \cite[Theorem~2.13]{Aluffi}. 

\begin{theorem}\label{lem:linmapAluffi}
  If $g(\partial)$ is a realizable covolume polynomial over $k$ and $A$ is a matrix with nonnegative rational entries, 
  then $g(A\delta)$ is a realizable covolume polynomial over $k$. 
\end{theorem}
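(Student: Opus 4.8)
The plan is to exhibit $g(A\partial)$ as $h(\partial)\circ x^{[\mu]}$ — equivalently, to realize the operation $g \mapsto g(A\partial)$ as the action of a realizable covolume polynomial on $g(\partial)\circ x^{[\mu]}$ — and then invoke Theorem~\ref{thm:covolumecharacterization}. First I would reduce to the case where $g$ is homogeneous (each homogeneous piece of $g(A\partial)$ comes from the corresponding homogeneous piece of $g$, and realizable covolume polynomials are closed under the relevant operations via Theorem~\ref{thm:covolumeproduct}, so homogeneity is harmless), and fix $\mu$ with $g\in\QQ[\partial]_{\le\mu}$. Write $f = g(\partial)\circ x^{[\mu]}$, a realizable volume polynomial over $k$ in the variables $x_1,\dots,x_n$. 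The substitution $\partial_j \mapsto (A\partial)_j = \sum_i A_{ij}\partial_i$ on $g$ should correspond, on the volume-polynomial side, to the linear change of coordinates on $f$ given by the transpose: if $y = (y_1,\dots,y_m)$ are new variables and we set $x_i = \sum_j A_{ij} y_j$, then $f(Ay)$ is again a realizable volume polynomial over $k$ — this is exactly the assertion that a nonnegative rational linear change of coordinates preserves realizable volume polynomials, which follows from the basic construction (a nonnegative rational linear reparametrization of the semiample divisors $D_i$, after clearing denominators, is again a collection of semiample divisors).

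The key computation then is to check that $f(Ay)$, viewed through the normalization/covolume dictionary, is precisely $g(A\partial)\circ y^{[\nu]}$ for an appropriate $\nu$. Concretely, I would unwind: $f = \sum_{\alpha\le\mu} d_{\mu-\alpha} x^{[\alpha]}$ where $g = \sum_\beta d_\beta \partial^\beta$; substituting $x = Ay$ and re-expanding in the $y$ variables produces a volume polynomial whose associated covolume polynomial (read off against the top monomial $y^{[\nu]}$, with $\nu$ large enough to contain the support) is exactly $\sum_\beta d_\beta (A\partial)^\beta = g(A\partial)$. The main obstacle I anticipate is the bookkeeping in this last step: the cone/shift constructions (observations (1) and (2) after the basic construction) are needed to line up the ambient exponent vectors so that "take the covolume polynomial of $f(Ay)$" genuinely returns $g(A\partial)$ rather than some shifted or truncated variant, and one must be careful that $A$ having more columns than rows (or vice versa) is accommodated — this is why the cone construction's independence of $\mu$ (noted after the definition of covolume polynomials) is invoked.

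Having established that $g(A\partial)\circ y^{[\nu]}$ equals a realizable volume polynomial over $k$ (namely $f(Ay)$, up to the standard normalization bookkeeping), the definition of realizable covolume polynomial gives immediately that $g(A\partial)$ is a realizable covolume polynomial over $k$, completing the proof. The statement of Aluffi \cite[Theorem~2.13]{Aluffi} for covolume polynomials (over $\RR$) then follows by taking limits, since a limit of nonnegative rational matrices realizing the relevant change of coordinates approximates any nonnegative real matrix and the set of realizable covolume polynomials is dense in the set of covolume polynomials by definition.
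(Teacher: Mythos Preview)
Your central claim --- that the substitution $\partial \mapsto A\delta$ on $g$ corresponds, under the duality $g \leftrightarrow f = g(\partial)\circ x^{[\mu]}$, to the substitution $x \mapsto Ay$ on $f$ --- is false, and this is not a matter of bookkeeping. Take $n=m=1$, $A=(a)$, and $g(\partial)=\partial^k$ with $0\le k\le\mu$. Then $f(x)=x^{[\mu-k]}$, so $f(ay)=a^{\mu-k}y^{[\mu-k]}$, whose associated covolume polynomial (read against $y^{[\mu]}$) is $a^{\mu-k}\delta^k$; but $g(a\delta)=a^k\delta^k$. More generally, for diagonal $A=\diag(a_1,\dots,a_n)$ the covolume polynomial of $f(Ay)$ has coefficients $d_\beta\, a^{\mu-\beta}$ where $g(A\delta)$ has coefficients $d_\beta\, a^{\beta}$; matching them would require the substitution $x\mapsto A^{-1}y$ (up to a scalar), and for non-diagonal or non-invertible nonnegative $A$ there is no analogous fix. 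The underlying reason is structural: the map $\partial^\alpha \mapsto x^{[\mu-\alpha]}$ is not a ring homomorphism, so it cannot intertwine linear substitutions on the two sides. This is exactly the asymmetry between Proposition~\ref{prop:linmap} (easy) and Theorem~\ref{lem:linmapAluffi} (hard) that the paper highlights; compare also Remark~\ref{ex:nvs2n}.

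The paper accordingly does not argue via the duality at all. It proves Theorem~\ref{lem:linmapAluffi} geometrically: after clearing denominators, one builds from Veronese, Segre, and linear embeddings a morphism $\varphi:(\PP^d)^m\to(\PP^e)^n$ with $\varphi^*(h_i)=\sum_j a_{ij}h_j$, takes a subvariety $Y\subseteq(\PP^e)^n$ representing $g$, and observes that for a general translate $gY$ the fiber product $\varphi^{-1}(gY)$ has all its irreducible components algebraically equivalent by Lemma~\ref{lem:HomogeneousIntersection}. Any one component then witnesses that $g(A\delta)$ is a realizable covolume polynomial. This is the same mechanism that drives Theorem~\ref{thm:covolumecharacterization}, and it is genuinely needed here; your proposed reduction to Proposition~\ref{prop:linmap} cannot be made to work.
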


\begin{proof}
The proof is similar to that of \cite[Theorem~2.13]{Aluffi}.
The use of Debarre's connectedness theorem \cite[Th\'eor\`eme 2.2]{olivier1996theoremes} by Aluffi is replaced by Lemma~\ref{lem:HomogeneousIntersection} to obtain the stronger statement.

Let $\partial=(\partial_1,\ldots,\partial_n)$ and $\delta=(\delta_1,\ldots,\delta_m)$ be the variables viewed as column vectors, and let $A=(a_{ij})$ be an $n \times m$ matrix whose entries are nonnegative rational numbers. 
 Replacing $A$ by its multiple by a sufficiently divisible positive integer, we may assume that $a_{ij}$ are nonnegative integers.
Let $d$ be the degree of $g$. By combining the Veronese map, the Segre map, and the linear embedding, we may construct a map
\[
\varphi: (\PP^d)^m \longrightarrow (\PP^e)^n \ \ \text{such that} \ \ \varphi^*(h_i)=a_{i1}h_1+\cdots+a_{im}h_m \ \ \text{for all $i$},
\]
where $h_i$ is the pullback of the hyperplane class from the $i$-th factor and $e \ge d$ is an integer.

We write $\PP^\mu=(\PP^d)^m$, $\PP^\nu=(\PP^e)^n$, and $g(\partial)=\sum_\alpha c_\alpha \partial^\alpha$.
By increasing $e$ using linear embeddings if necessary, 
we can find a positive rational number $\lambda$ and a subvariety $Y$ of $\PP^\nu$ such that
$\lambda[Y]=\sum_\alpha c_\alpha h^{\nu-\alpha} \cap [\PP^\nu]$.
By Lemma~\ref{lem:HomogeneousIntersection}, for a general $g$ in the automorphism group of $\PP^\nu$, the irreducible components of 
\[
\varphi^{-1}(gY) \simeq gY \times_{\PP^\nu} \PP^\mu
\]
are algebraically equivalent to each other in $\PP^\mu$. Therefore, any one of the irreducible components witnesses the fact that $g(A\delta)$ is a realizable covolume polynomial over $k$.
\end{proof}

It is instructive to compare Theorem~\ref{lem:linmapAluffi} with the corresponding statement for realizable volume polynomials, which is easier.

\begin{proposition}
  \label{prop:linmap}
  If $f(x)$ is a realizable volume polynomial over $k$ and $A$ is a matrix with nonnegative rational entries, 
  then $f(Ay)$ is a realizable volume polynomial over $k$.
\end{proposition}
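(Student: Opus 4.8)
The plan is to reduce the statement to the definition of a realizable volume polynomial by a direct change-of-divisors argument on the \emph{same} variety. Writing $f=\lambda f_D$ for some $\lambda\in\QQ_{\ge 0}$ and a collection of semiample divisors $D=(D_1,\dots,D_n)$ on a $d$-dimensional projective variety $Y$ over $k$, and recalling that $f_D=\frac{1}{d!}\int_Y(\sum_i x_iD_i)^d$, I would substitute $x=Ay$ where $A=(a_{ij})$ is an $n\times m$ matrix of nonnegative rational numbers. Since $\sum_i(Ay)_iD_i=\sum_j y_j\big(\sum_i a_{ij}D_i\big)$, the polynomial $f(Ay)$ becomes $\frac{\lambda}{d!}\int_Y\big(\sum_j y_jE_j\big)^d$, where $E_j\coloneq\sum_i a_{ij}D_i$ is a $\QQ$-Cartier divisor on $Y$.

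Next I would clear denominators. Choose a positive integer $N$ so divisible that every $Na_{ij}$ is a nonnegative integer, and set $D'_j\coloneq\sum_i (Na_{ij})D_i=NE_j$. Here I would record the elementary fact that a nonnegative integral combination of semiample Cartier divisors is semiample: if $m_iD_i$ is basepoint-free for each $i$, then for any common multiple $m$ of the $m_i$ the divisor $m\big(\sum_i c_iD_i\big)=\sum_i c_i(mD_i)$ is basepoint-free, since positive multiples and sums of basepoint-free divisors are basepoint-free. Thus each $D'_j$ is a genuine semiample divisor on $Y$, and by the homogeneity of degree $d$ in the intersection product, $f_{D'}=\frac{1}{d!}\int_Y\big(\sum_j y_jD'_j\big)^d=N^d\cdot\frac{1}{d!}\int_Y\big(\sum_j y_jE_j\big)^d$.

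Combining the two steps yields $f(Ay)=\lambda N^{-d}f_{D'}$ with $\lambda N^{-d}\in\QQ_{\ge 0}$ and $D'=(D'_1,\dots,D'_m)$ a collection of semiample divisors on the $d$-dimensional projective variety $Y$, so $f(Ay)$ is a realizable volume polynomial over $k$; the case $f=0$ is trivial, taking $\lambda=0$. I do not expect any genuine obstacle: in contrast to the covolume counterpart Theorem~\ref{lem:linmapAluffi}, no moving lemma or connectedness input is needed, because on the volume side a nonnegative linear substitution is realized simply by replacing the divisors $D_i$ by their nonnegative combinations on the same variety. The only point that warrants a sentence of justification is the closure of semiampleness under nonnegative integral combinations.
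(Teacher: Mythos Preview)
Your proposal is correct and follows the same approach as the paper: reduce to the case of nonnegative integer entries by clearing denominators, and then use that nonnegative integral combinations of semiample divisors are semiample. The paper's proof is a two-sentence sketch of exactly this; you have simply made the scaling factor $\lambda N^{-d}$ and the semiampleness closure argument explicit.
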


\begin{proof}
It is enough to prove the statement when the entries of  $A$ are nonnegative integers. 
In this case, the assertion follows from the fact that nonnegative integral linear combinations of semiample divisors are semiample.
\end{proof}

Theorem~\ref{thm:volumecharacterization} strengthens the following corollary.

\begin{corollary}
  \label{cor:prodofvol}
If $f_1$ and $f_2$ are realizable volume polynomials over $k$, then $f_1f_2$ is a realizable volume polynomial over $k$.
\end{corollary}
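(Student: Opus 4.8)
The plan is to produce, directly, a projective variety equipped with semiample divisors whose volume polynomial equals $f_1 f_2$, exploiting the fact that on a product of varieties the divisors $\pi_1^*D_i+\pi_2^*D_i'$ have a top self-intersection number that factors through the projection formula. If either $f_1$ or $f_2$ is zero, then $f_1f_2=0=0\cdot f_D$ is a realizable volume polynomial, so I may assume both are nonzero. After relabeling the variables and padding the unused slots with the basepoint-free divisor $\mathcal{O}_Y$ (which contributes nothing to the integral), I may assume
\[
f_1=\frac{\lambda_1}{d_1!}\int_{Y_1}\Big(\sum_{i=1}^{n}x_iD_i\Big)^{d_1}\qquad\text{and}\qquad f_2=\frac{\lambda_2}{d_2!}\int_{Y_2}\Big(\sum_{i=1}^{n}x_iD_i'\Big)^{d_2},
\]
where $\lambda_1,\lambda_2\in\QQ_{\ge 0}$, each $D_i$ is semiample on a $d_1$-dimensional projective variety $Y_1$ over $k$, and each $D_i'$ is semiample on a $d_2$-dimensional projective variety $Y_2$ over $k$.

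Next I would set $Y\coloneq Y_1\times_k Y_2$, a projective variety over $k$ of dimension $d_1+d_2$ (irreducibility of the product uses that $k$ is algebraically closed, which costs nothing here), with projections $\pi_1,\pi_2$, and define $E_i\coloneq\pi_1^*D_i+\pi_2^*D_i'$. Since pullbacks and finite sums of semiample divisors are semiample, each $E_i$ is semiample on $Y$. Writing $\alpha=\sum_i x_iD_i$ on $Y_1$ and $\beta=\sum_i x_iD_i'$ on $Y_2$, we have $\sum_i x_iE_i=\pi_1^*\alpha+\pi_2^*\beta$; expanding by the binomial theorem, the $Y_1$-factor $\alpha^k$ vanishes once $k>d_1$ and the $Y_2$-factor $\beta^{d_1+d_2-k}$ vanishes once $k<d_1$, so only the term with $k=d_1$ survives pushforward to a point, and the projection formula gives
\[
\int_Y\Big(\sum_i x_iE_i\Big)^{d_1+d_2}=\binom{d_1+d_2}{d_1}\Big(\int_{Y_1}\alpha^{d_1}\Big)\Big(\int_{Y_2}\beta^{d_2}\Big).
\]
Dividing by $(d_1+d_2)!$ and using $\binom{d_1+d_2}{d_1}/(d_1+d_2)!=1/(d_1!\,d_2!)$ identifies the volume polynomial of $(Y,E)$ with $f_D\cdot f_{D'}$, whence $f_1f_2=(\lambda_1\lambda_2)\,f_E$ with $\lambda_1\lambda_2\in\QQ_{\ge 0}$, which is a realizable volume polynomial over $k$.

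I do not expect a genuine obstacle: the only care needed is the bookkeeping to align the two variable sets and the observation that the multinomial coefficient cancels against $(d_1+d_2)!$. For completeness I would also record the formal deduction from Theorem~\ref{thm:volumecharacterization}: it suffices to check that $(f_1f_2)(x)\cdot g(\partial)$ is a realizable covolume polynomial over $k$ for every realizable covolume polynomial $g$ over $k$; since $\RR[\partial]$ is an $\RR[x]$-module under the contraction action, $(f_1f_2)(x)\cdot g(\partial)=f_1(x)\cdot\big(f_2(x)\cdot g(\partial)\big)$, and two applications of Theorem~\ref{thm:volumecharacterization}, first to $f_2$ and then to $f_1$, complete the argument.
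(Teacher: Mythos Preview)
Your main argument is correct and is essentially the paper's proof: both take the product variety $Y_1\times_k Y_2$ with semiample divisors pulled back from the factors. The paper keeps the two variable sets disjoint (so the volume polynomial on the product is $f_1(x)f_2(y)$) and then invokes Proposition~\ref{prop:linmap} to set $y=x$; you instead build $E_i=\pi_1^*D_i+\pi_2^*D_i'$ directly, which amounts to performing that diagonal substitution at the level of divisors. The binomial computation you give is exactly what makes either version work.

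One caution on your closing paragraph: the ``formal deduction from Theorem~\ref{thm:volumecharacterization}'' is circular in the paper's logical structure. The proof of Theorem~\ref{thm:volumecharacterization} relies on Theorem~\ref{thm:symbol}, whose proof in turn uses Corollary~\ref{cor:prodofvol} (to show that $f(x)y^{[\nu]}$ is a realizable volume polynomial). The paper does remark in the introduction that the product statement follows formally from Theorem~\ref{thm:volumecharacterization}, but only as an \emph{a posteriori} observation after both are independently established. Your direct construction is self-contained, so this does not damage correctness; just drop the alternative or flag it as a consequence rather than a proof.
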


\begin{proof}
The argument is identical to that for \cite[Lemma 2.4]{ross2025diagonalizations}: If $f_1(x)$ is the realizable volume polynomial arising from semiample divisors on $Y_1$ and $f_2(y)$ is the realizable volume polynomial from semiample divisors on $Y_2$, then $f_1(x)f_2(y)$ is the realizable volume polynomial from semiample divisors on $Y_1 \times Y_2$. By Proposition~\ref{prop:linmap}, we conclude that $f_1(x)f_2(x)$ is a realizable volume polynomial over $k$.
\end{proof}

We conclude this section by examining how our results extend when the ground field $k$ is not necessarily algebraically closed. We extend the definition of realizable volume polynomials over any field $k$ as follows: 
\begin{quote}
\emph{A homogeneous polynomial $f$ is a realizable volume polynomial over $k$ if there exist a nonnegative rational number $\lambda$ and a collection of semiample divisors $D$ on a projective variety $Y$ over $k$ such that  $f=\lambda f_D$, where a variety over $k$ means a reduced and irreducible scheme of finite type over $k$. 
 }
\end{quote}
We show that the resulting class of polynomials depends only on the characteristic of $k$.

\begin{proposition}\label{prop:ArbitraryField0}
The following conditions are equivalent for a polynomial $f$ and $p \ge 0$.
\begin{enumerate}[(1)]\itemsep 5pt
\item The polynomial $f$ is a realizable volume polynomial over some field of characteristic $p$.
\item The polynomial $f$ is a realizable volume polynomial over any field of characteristic $p$.
\end{enumerate}
\end{proposition}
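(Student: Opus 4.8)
The implication $(2)\Rightarrow(1)$ is immediate, so the content is $(1)\Rightarrow(2)$. The plan is to deduce it from two assertions: \textbf{(I)} a polynomial is a realizable volume polynomial over a field $k$ if and only if it is one over the algebraic closure $\overline{k}$; and \textbf{(II)} realizable volume polynomials over algebraically closed fields of the same characteristic form the same set. Granting these, for $k_1,k_2$ of characteristic $p$ one gets realizability over $k_1$ $\iff$ over $\overline{k_1}$ $\iff$ over $\overline{k_2}$ $\iff$ over $k_2$, which is the claim.

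Everything rests on three elementary mechanisms. First, intersection numbers of line bundles on a proper scheme are invariant under extension of the base field, and more generally are locally constant in a flat proper family (constancy of Euler characteristics, i.e.\ of the Snapper polynomial); in particular the volume polynomial of a variety is unchanged by base field extension. Second, semiampleness is preserved by pullback along an arbitrary morphism and is insensitive to the base field, since the base locus of a linear system is intrinsic to the scheme and the line bundle. Third, reinterpreting a projective variety over a subfield of finite index $e$ multiplies its volume polynomial by $e$, and replacing a variety by a top-dimensional irreducible component along a generically finite dominant morphism of degree $\delta$ multiplies it by $\delta$ (projection formula); the factor $\lambda\in\QQ_{\ge0}$ in Definition~\ref{def:volpoly} absorbs all such integer scalars, which is exactly what makes these operations permissible.

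For \textbf{(II)}, I would start from $f=\lambda f_D$ with $D$ semiample on an integral projective $Y$ over an algebraically closed $\Omega_1$. Since $Y$ is then geometrically integral, spreading out gives a finitely generated subfield $K_0\subseteq\Omega_1$ and a \emph{geometrically integral} projective model $\mathcal{Y}_0$ over $K_0$ with semiample divisors realizing $f$ (geometric integrality of the model descends from $\Omega_1$, and intersection numbers are preserved). Writing $K_0$ as the function field of an integral scheme $B$ of finite type over the prime field, I would spread $(\mathcal{Y}_0,\mathcal{D}_0)$ out to a flat projective family over a dense open $U\subseteq B$ with geometrically integral fibers on which some fixed multiple $m\mathcal{D}_i$ is fiberwise basepoint-free; intersection numbers stay constant along the connected base $U$. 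For any closed point $b\in U$, the residue field $\kappa(b)$ is a finite extension of the prime field, hence embeds into the algebraically closed field $\Omega_2$ of the same characteristic; the fiber $\mathcal{Y}_b$ is a geometrically integral projective variety over $\kappa(b)$ realizing $f$, and base-changing along $\kappa(b)\hookrightarrow\Omega_2$ shows $f$ is realizable over $\Omega_2$. Symmetry finishes (II).

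For \textbf{(I)}: if $f$ is realizable over $\overline{k}$, the data descend to a subfield $k'$ finitely generated over $k$, hence finite over $k$ since $\overline{k}/k$ is algebraic; the descended model is again integral with semiample divisors, so $f$ is realizable over $k'$, and reinterpreting that model over $k$ and rescaling by $[k':k]$ shows $f$ is realizable over $k$. Conversely, given $f=\lambda f_D$ over $k$ via $(Y,D)$, I would pass to a finite separable extension $k'/k$ over which $Y_{k'}$ acquires a geometrically irreducible component dominating $Y$; its reduction $V$ maps finitely and surjectively onto $Y$ of some finite degree $\delta$, the $D_i$ pull back to semiample divisors on $V$, and the projection formula makes the volume polynomial of $V$ equal to $\tfrac{\delta}{\lambda}f$, so $f$ is realizable over $k'$; base-changing $V$ to $\overline{k}$, passing to its reduction, and rescaling by the generic multiplicity then realizes $f$ over $\overline{k}$. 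The main obstacle is purely technical: the genuine care is all for imperfect base fields and inseparable extensions — arranging that the family produced by spreading out has geometrically integral fibers, and that after any base change one can still extract a single $d$-dimensional component whose volume polynomial is a positive rational multiple of $f$ despite possible nilpotents.
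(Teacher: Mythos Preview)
Your proposal is correct and covers the same ground as the paper, but the organization and some of the tools differ. The paper reduces to proving ``realizable over $k$ $\iff$ realizable over $\ell$'' for an arbitrary extension $\ell/k$, then factors this into three cases: separable algebraic, purely inseparable, and purely transcendental. You instead factor through the algebraic closure via your claims (I) and (II). Both decompositions work.

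The concrete differences are worth noting. First, the paper passes through the \emph{basic construction} to replace $(Y,D)$ by a subvariety $X\subseteq\PP^\mu$, so that the separable algebraic case is handled by the transitive Galois action on the components of $X_\ell$, and the purely inseparable case is immediate because $\PP^\mu_\ell\to\PP^\mu_k$ is a homeomorphism. You stay with the original pair $(Y,D)$, so you locate a geometrically irreducible component over a finite separable extension and then absorb the generic multiplicity coming from the inseparable part into the scalar $\lambda$; this is correct but is exactly the ``technical obstacle'' you flag, which the paper's homeomorphism argument sidesteps in one line. Second, for the transcendental direction the paper spreads out over $\mathbb{A}^1$ and invokes the lemma that irreducible components of a general fiber of a proper surjection between irreducible varieties are algebraically equivalent; you avoid that lemma by arranging from the outset that the spread-out family has \emph{geometrically integral} fibers (possible since you start over an algebraically closed field), so a single fiber already realizes $f$ without having to compare components. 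This is a genuine simplification on that step. In summary: your route is sound, trades the paper's clean inseparable argument for a slightly cleaner transcendental one, and does not rely on the $\PP^\mu$ embedding.
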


\begin{proof}
Let $\ell/k$ be a field extension. We show that $f$ is a realizable volume polynomial over $\ell$ if and only if it is a realizable volume polynomial over $k$. In the argument below, we implicitly use that
 $[X]$ and $[X_{\text{red}}]$ are equal up to a positive multiple
for any irreducible scheme $X$ in the Chow ring of $X$.

First, we consider the case when $\ell$ is a separable algebraic extension of $k$. 
It is enough to prove the statement when $\ell$ is the separable closure of $k$. 
Let $f$ be a realizable volume polynomial over $k$, and let $X$ be a  subvariety of $\PP^\mu$ over $k$ constructed from $f$ using the \emph{basic construction}.
By \cite[Section 3.2, Exercise 2.10]{Liu}, the Galois group of  $\ell/k$ acts transitively on the irreducible components of $X_{\ell}\coloneq X \times_{\text{Spec}(k)} \text{Spec}(\ell)$ and trivially on the Chow group of $\PP^\mu_{\ell}$. Thus, the realizable volume polynomial determined by  any irreducible component of $X_{\ell}$ is equal to $f$ up to  a positive multiple, and hence $f$ is a realizable volume polynomial over $\ell$.

For the converse direction, let $f$ be a realizable volume polynomial over $\ell$, and let $Y$ be a subvariety of $\PP^\mu_{\ell}$ over $\ell$ constructed from $f$ as above.  
The union of the orbits of $Y$ under the Galois group of $\ell/k$ defines a subvariety $X$ of $\PP^\mu$ over $k$.
Thus, the realizable volume polynomial determined by $Y$  is equal to $f$ up to a positive multiple, and hence $f$ is a realizable volume polynomial over $k$.

Second, we consider the case when $\ell$ is a purely inseparable extension of $k$. In this case, the set of realizable volume polynomials over $\ell$ coincide with that over $k$ because the natural map $\PP^\mu_\ell \to \PP^\mu_k$ is a homeomorphism \cite[Section 3.2, Proposition 2.7]{Liu}.

Third, we consider the case when $\ell$ is a purely transcendental extension of $k$.
Let $f$ be a realizable volume polynomial over $k$, and let $X$ be a subvariety of $\PP^\mu$ over $k$ constructed from $f$ as above. 
Since the polynomial ring $R[x]$ is an integral domain for any integral domain $R$, it follows that $X_\ell$ is reduced and irreducible,  witnessing  that $f$ is a realizable volume polynomial over $\ell$.

For the converse direction,  let $f$ be a realizable volume polynomial over $\ell$, and let $Y$ be a subvariety of $\PP^\mu_{\ell}$ over $\ell$ constructed from $f$ as above.  
Since $Y$ is of finite type over $\ell$, we may suppose that $\ell$ is a finitely generated purely transcendental extension of $k$.
Thus, by induction, we may further suppose that $\ell=k(t)$, where $t$ is a transcendental element over $k$.
Since $\overline{k}(t)$ is an algebraic extension of $k(t)$,
we may use our previous analysis to suppose in addition that $k$ is algebraically closed.

By clearing denominators of the equations for $Y \subseteq \PP_\ell^\mu$ and taking the closure of the generic fiber in the resulting integral model,
we can construct a family $\mathscr{Y} \subseteq \mathbb{A}^1 \times \PP^\mu$, proper and flat over $\mathbb{A}^1=\text{Spec}(k[t])$, such that $\mathscr{Y}$ irreducible and $\mathscr{Y} \to \mathbb{A}^1$ has the generic fiber $Y$.
By the lemma on the irreducible components of a general fiber 
\cite[Lemma 2.6]{HHMWW}, used before in the proof of Lemma~\ref{lem:HomogeneousIntersection}, the irreducible components of a general fiber of $\mathscr{Y} \to \mathbb{A}^1$ are algebraically equivalent to each other in $\mathscr{Y}$.
Let $X$ be any irreducible component of a general fiber of $\mathscr{Y} \to \mathbb{A}^1$.
Since the intersection number is locally constant in flat families \cite[Proposition 23.151]{GortzWedhorn}, 
the realizable volume polynomial determined by $X \subseteq \PP^\mu$ is equal to $f$ up to a positive multiple, and hence
$f$ is a realizable volume polynomial over $k$.
\end{proof}

It follows that the properties of being a volume polynomial, a covolume polynomial, a realizable volume polynomial, or a realizable covolume polynomial over $k$ only depend on the characteristic of $k$. 

\section{The symbol theorem for volume polynomials}

The goal is to prove Theorem~\ref{thm:symbol}.  We use two pairs of dual variables:
\[
x=(x_1,x_2,\ldots) \   \text{and} \  \partial=(\partial_1,\partial_2,\ldots),  \quad y=(y_1,y_2,\ldots) \ \text{and} \   \delta=(\delta_1,\delta_2,\ldots).
\]
As before, we let  $\RR[\partial,\delta]$ act on $\RR[x,y]$ as differential operators by the rule
\[
\partial^\alpha\delta^\beta \circ x^{[\zeta]} y^{[\eta]}=\begin{cases} x^{[\zeta-\alpha]} y^{[\eta-\beta]} & \text{if $\alpha \le \zeta$ and $\beta \le \eta$,}\\ \hfill 0 \hfill & \text{if otherwise.}\end{cases}
\]
We define the coefficients $c_{\alpha,\beta}^T$ of a homogeneous linear operator $T: \RR[x]_{\le \mu} \to \RR[y]_{\le \nu}$ by 
\[
\text{sym}_T=\sum_{\alpha \le \mu} T(x^{[\alpha]}) x^{[\mu-\alpha]}=\sum_{\alpha, \beta} c^T_{\alpha,\beta} \, x^{[\mu-\alpha]} y^{[\beta]}.
\]
The \emph{cosymbol} of $T$ is the homogeneous polynomial in the dual variables\footnote{The authors of \cite{RSW23} use the term for a related polynomial that is different from the cosymbol considered here.}
\[
\text{cosym}_T\coloneq \sum_{\alpha, \beta} c^T_{\alpha,\beta} \, \partial^\alpha \delta^{\nu-\beta} \in \RR[\partial,\delta].
\]

\begin{lemma}\label{lem:cosym}
For any homogeneous linear operator $T: \RR[x]_{\le \mu} \to \RR[y]_{\le \nu}$ and  $f \in \RR[x]_{\le \mu}$, 
\[
\left[\text{cosym}_T \circ f(x)\, y^{[\nu]} \right]_{x=0}=T(f(x)).
\]
\end{lemma}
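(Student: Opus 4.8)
The plan is to verify the identity by expanding both sides in the normalized monomial basis and matching coefficients, which reduces the statement to a bookkeeping check about how the two dual differential actions interact. First I would fix $f = \sum_{\zeta \le \mu} a_\zeta\, x^{[\zeta]}$ and compute the left-hand side directly. By definition, $\text{cosym}_T = \sum_{\alpha,\beta} c^T_{\alpha,\beta}\, \partial^\alpha \delta^{\nu-\beta}$, and since $\delta^{\nu-\beta}$ acts only on the $y$-variables while $\partial^\alpha$ acts only on the $x$-variables, we have
\[
\text{cosym}_T \circ \big(f(x)\, y^{[\nu]}\big) = \sum_{\alpha,\beta} c^T_{\alpha,\beta}\, \big(\partial^\alpha \circ f(x)\big)\, \big(\delta^{\nu-\beta} \circ y^{[\nu]}\big) = \sum_{\alpha,\beta} c^T_{\alpha,\beta}\, \big(\partial^\alpha \circ f(x)\big)\, y^{[\beta]}.
\]
Setting $x = 0$ kills every term in $\partial^\alpha \circ f(x)$ except the constant one, i.e. it extracts the coefficient of $x^{[\alpha]}$ in $f$, which is $a_\alpha$ when $\alpha \le \mu$ and $0$ otherwise (here one uses that $f \in \RR[x]_{\le\mu}$, so only $\alpha \le \mu$ contribute, matching the range of the sum defining $\text{cosym}_T$). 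Hence the left-hand side equals $\sum_{\alpha \le \mu,\, \beta} c^T_{\alpha,\beta}\, a_\alpha\, y^{[\beta]} = \sum_\beta \Big(\sum_{\alpha \le \mu} c^T_{\alpha,\beta}\, a_\alpha\Big) y^{[\beta]}$.

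Next I would compute the right-hand side. By linearity of $T$, we have $T(f(x)) = \sum_{\zeta \le \mu} a_\zeta\, T(x^{[\zeta]})$, so it suffices to identify $T(x^{[\zeta]})$ in terms of the $c^T_{\alpha,\beta}$. From the defining relation
\[
\text{sym}_T = \sum_{\alpha \le \mu} T(x^{[\alpha]})\, x^{[\mu-\alpha]} = \sum_{\alpha,\beta} c^T_{\alpha,\beta}\, x^{[\mu-\alpha]} y^{[\beta]},
\]
and the linear independence of the monomials $x^{[\mu-\alpha]}$ for distinct $\alpha \le \mu$, one reads off $T(x^{[\alpha]}) = \sum_\beta c^T_{\alpha,\beta}\, y^{[\beta]}$ for each $\alpha \le \mu$. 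Substituting, $T(f(x)) = \sum_{\alpha \le \mu} a_\alpha \sum_\beta c^T_{\alpha,\beta}\, y^{[\beta]} = \sum_\beta \Big(\sum_{\alpha \le \mu} c^T_{\alpha,\beta}\, a_\alpha\Big) y^{[\beta]}$, which is exactly the expression obtained for the left-hand side. This completes the proof.

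I do not expect a genuine obstacle here; the only point requiring a little care is making sure the bracket $[\,\cdot\,]_{x=0}$ is interpreted as "evaluate the $x$-variables at $0$ while keeping the $y$-variables formal," so that it extracts the $x$-constant term of a polynomial in $\RR[x,y]$, and that the separation of the $\partial$- and $\delta$-actions is legitimate — both follow immediately from the product rule $\partial^\alpha\delta^\beta \circ x^{[\zeta]}y^{[\eta]} = (\partial^\alpha \circ x^{[\zeta]})(\delta^\beta \circ y^{[\eta]})$ recorded just before the lemma. One should also note that the identity $\delta^{\nu-\beta}\circ y^{[\nu]} = y^{[\beta]}$ uses $\beta \le \nu$, which holds because $\text{sym}_T$ lies in $\RR[x]_{\le\mu}\otimes\RR[y]_{\le\nu}$ by construction (the image of $T$ lands in $\RR[y]_{\le\nu}$).
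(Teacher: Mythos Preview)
Your proof is correct and follows essentially the same direct-computation approach as the paper. The only organizational difference is that the paper reduces by linearity in $T$ (checking the identity for rank-one operators $x^{[\beta]} \mapsto y^{[\gamma]}$), whereas you expand $f$ in the normalized basis and match coefficients; both amount to the same bookkeeping.
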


The left-hand side is the polynomial in $y$ obtained by first applying 
the cosymbol of $T$ to the product  $f(x)\, y^{[\nu]}$, then evaluating the result at $x=0$.
For visual simplicity, the parentheses around the product will be omitted.

\begin{proof}
    Both sides of the equation are linear in $T$. Thus, it is enough to verify the identity for homogeneous linear operators $T$ of the form
    \[
T(x^{[\alpha]})=\begin{cases} y^{[\gamma]}& \text{if $\alpha=\beta$,}\\ \hfill 0\hfill &\text{if $\alpha\neq\beta$.}\end{cases}
    \]
    In this case, the symbol of $T$ is the normalized monomial $x^{[\mu-\beta]}y^{[\gamma]}$, and the cosymbol of $T$ is the monomial $\partial^\beta \delta^{\nu-\gamma}$.
Therefore, for any $f=\sum_\alpha c_\alpha x^{[\alpha]}$, we have
\[
\left[\text{cosym}_T \circ f(x)\, y^{[\nu]}\right]_{x=0}=\left[\partial^\beta \delta^{\nu-\gamma}\circ f(x)\, y^{[\nu]}\right]_{x=0}=c_\beta y^{[\gamma]}=T(f(x)). \qedhere
\]
\end{proof}

\begin{proof}[Proof of Theorem~\ref{thm:symbol}]
If the symbol of $T$ is a realizable volume polynomial over $k$, then
the cosymbol of $T$ is a realizable covolume polynomial over $k$.
If $f(x)$ is a realizable volume polynomial over $k$, so is $f(x)y^{[\nu]}$, by Corollary~\ref{cor:prodofvol}.
It follows from Theorem~\ref{thm:covolumecharacterization} that
\[
\text{cosym}_T \circ f(x) y^{[\nu]}
\]
is a realizable volume polynomial over $k$.
By Proposition~\ref{prop:linmap}, we see that
\[
\left[\text{cosym}_T \circ f(x) y^{[\nu]} \right]_{x=0}
\]
is a realizable volume polynomial over $k$.
By Lemma~\ref{lem:cosym}, this is equal to $T(f(x))$.
\end{proof}

\begin{remark}
Geometrically, one may view $T$ as a map $\varphi_T:\CH(\PP^\mu)_\RR \to \CH(\PP^\nu)_\RR$.
If this map is induced by an irreducible correspondence $\Gamma \subseteq \PP^\mu \times \PP^\nu$ so that
    \[
    \varphi_{T}(\Lambda)=p_{2*}\big(\Gamma\cap p_1^*(\Lambda)\big) \ \ \text{for $\Lambda \in \CH(\PP^\mu)_\RR$,}
    \]
 it preserves the classes of irreducible cycles up to a rational multiple, by Lemma~\ref{lem:HomogeneousIntersection}.
\end{remark}

The first item of the following corollary sharpens a statement from \cite[Corollary 2.10]{ross2025diagonalizations}.

\begin{corollary}\label{cor:lowerupper}
Let $f=\sum_\alpha c_\alpha x^{[\alpha]}$ be a realizable volume polynomial over $k$, and let $\gamma \in \ZZ^\infty_{\ge 0}$.
\begin{enumerate}[(1)]\itemsep 5pt
\item The \emph{lower truncation} $f_{\ge \gamma} \coloneq \sum_{\alpha\ge \gamma} c_\alpha x^{[\alpha]}$ is a realizable volume polynomial over $k$.
\item The \emph{upper truncation} $f_{\le \gamma} \coloneq \sum_{\alpha  \le \gamma} c_\alpha x^{[\alpha]}$ is a realizable volume polynomial over $k$.
\end{enumerate}
\end{corollary}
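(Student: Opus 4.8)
The plan is to realize each truncation as a homogeneous linear operator and apply the symbol theorem, Theorem~\ref{thm:symbol}. Fix $\mu \in \ZZ^\infty_{\ge 0}$ with $f \in \QQ[x]_{\le \mu}$; after enlarging $\mu$ to the componentwise maximum of $\mu$ and $\gamma$ in part (1), and replacing $\gamma$ by the componentwise minimum of $\gamma$ and $\mu$ in part (2), we may assume $\gamma \le \mu$ without changing $f$ or the relevant truncation. Let $y = (y_1, y_2, \ldots)$ be a fresh set of variables, and define homogeneous (degree-preserving on their nonzero parts) operators $T_{\ge \gamma}, T_{\le \gamma} : \QQ[x]_{\le \mu} \to \QQ[y]_{\le \mu}$ by setting $T_{\ge \gamma}(x^{[\alpha]}) = y^{[\alpha]}$ when $\alpha \ge \gamma$ and $0$ otherwise, and $T_{\le \gamma}(x^{[\alpha]}) = y^{[\alpha]}$ when $\alpha \le \gamma$ and $0$ otherwise. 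By construction $T_{\ge \gamma}(f)$ and $T_{\le \gamma}(f)$ are precisely $f_{\ge \gamma}$ and $f_{\le \gamma}$ in the variables $y$, and renaming $y$ back to $x$ is a nonnegative linear change of coordinates, harmless by Proposition~\ref{prop:linmap}.

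The crux is to show that the symbols of these operators are realizable volume polynomials over $k$. Directly from the definition, after reindexing $\alpha = \gamma + \beta$ and $\alpha = \gamma - \beta$,
\[
\text{sym}_{T_{\ge \gamma}} = \sum_{\gamma \le \alpha \le \mu} y^{[\alpha]} x^{[\mu - \alpha]} = \sum_{0 \le \beta \le \mu - \gamma} y^{[\beta + \gamma]} x^{[(\mu - \gamma) - \beta]}, \qquad \text{sym}_{T_{\le \gamma}} = \sum_{0 \le \alpha \le \gamma} y^{[\alpha]} x^{[\mu - \alpha]} = \sum_{0 \le \beta \le \gamma} y^{[\gamma - \beta]} x^{[(\mu - \gamma) + \beta]}.
\]
Now for any $\rho \in \ZZ^\infty_{\ge 0}$ the polynomial $\sum_{0 \le \beta \le \rho} x^{[\beta]} y^{[\rho - \beta]} = \prod_i \frac{(x_i + y_i)^{\rho_i}}{\rho_i!}$ is a realizable volume polynomial over $k$: it is the volume polynomial of the product of projective spaces $\prod_i \PP^{\rho_i}$ with the semiample divisors $D_i = D_i' = $ pullback of the hyperplane class from the $i$-th factor, or equivalently the image of $z^{[\rho]}$ under the substitution $z_i \mapsto x_i + y_i$ (Proposition~\ref{prop:linmap}). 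Applying the cone construction of Section~\ref{sec:volumecovolume} — observation (1) of the basic construction — to this polynomial, with $\rho = \mu - \gamma$ and bumping the $y$-exponents by $\gamma$, produces exactly $\text{sym}_{T_{\ge \gamma}}$; with $\rho = \gamma$ and bumping the $x$-exponents by $\mu - \gamma$, it produces $\text{sym}_{T_{\le \gamma}}$. Hence both symbols are realizable volume polynomials over $k$.

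By Theorem~\ref{thm:symbol}, $T_{\ge \gamma}$ and $T_{\le \gamma}$ therefore send realizable volume polynomials over $k$ to realizable volume polynomials over $k$. Applied to $f$ and followed by the renaming of variables, this shows that $f_{\ge \gamma}$ and $f_{\le \gamma}$ are realizable volume polynomials over $k$. There is no serious obstacle in this argument; the only point that takes a moment is the bookkeeping needed to recognize the symbols of the truncation operators and, via the cone construction applied within one block of variables, to identify them as realizable volume polynomials.
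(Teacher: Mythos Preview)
Your proof is correct and follows the same approach as the paper: compute the symbol of the truncation operator and apply Theorem~\ref{thm:symbol}. The only variation is in verifying that the symbol is realizable --- the paper factors it as $\prod_i \sum_{\gamma_i \le \alpha_i \le \mu_i} x_i^{[\mu_i-\alpha_i]} y_i^{[\alpha_i]}$, notes each factor is a bivariate Lorentzian form, and invokes \cite[Theorem~21]{HuhMilnor} together with Corollary~\ref{cor:prodofvol}, whereas you recognize the symbol as an exponent-shift (via the cone construction of Section~\ref{sec:volumecovolume}) of $\prod_i (x_i+y_i)^{\rho_i}/\rho_i!$, which has the mild advantage of avoiding the external reference.
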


\begin{proof}
For any $\mu \in \ZZ^\infty_{\ge 0}$, the symbol of the lower truncation operator on $\RR[x]_{\le \mu}$ is the product
\[
\sum_{\gamma \le \alpha \le \mu}  x^{[\mu-\alpha]}y^{[\alpha]}=\prod_{i} \left(\sum_{\gamma_i \le \alpha_i \le \mu_i} x_i^{[\mu_i-\alpha_i]}y_i^{[\alpha_i]} \right).
\]
Each factor is a Lorentzian bivariate form, so it is a realizable volume polynomial over $k$ by \cite[Theorem 21]{HuhMilnor}. Since the product of realizable volume polynomials over $k$ is a realizable volume polynomial over $k$ by Corollary~\ref{cor:prodofvol}, this symbol is a realizable volume polynomial over $k$. We conclude the proof of the first item by Theorem~\ref{thm:symbol}.

On the other hand, the symbol of the upper truncation operator on $\RR[x]_{\le \mu}$ is the product
\[
\sum_{0 \le \alpha \le \gamma}  x^{[\mu-\alpha]} y^{[\alpha]}=\prod_{i} \left(\sum_{0 \le \alpha_i \le \gamma_i} x_i^{[\mu_i-\alpha_i]}y_i^{[\alpha_i]} \right).
\]
Again, each factor is a Lorentzian bivariate form, so we may conclude the proof of the second item in the same way.
\end{proof}

We now use the symbol theorem to show that realizable volume polynomials are precisely the polynomial differential operators that preserve realizable covolume polynomials.

\begin{proof}[Proof of Theorem~\ref{thm:volumecharacterization}]

Let $f$ be a realizable volume polynomial over $k$ in $\RR[x]$. We show that $f(x) \cdot g(\partial)$ is a realizable covolume polynomial over $k$ for any realizable covolume polynomial $g$ in $\RR[\partial]$.
If
$f(x)=\sum_\alpha c_\alpha x^{[\alpha]}$ and $g(\partial)=\sum_\beta d_\beta \partial^\beta$, then 
\[
f(x) \cdot g(\partial)=\sum_{\alpha \le \beta} c_\alpha d_\beta\, \frac{\beta!}{\alpha!} \,\partial^{[\beta-\alpha]}=\sum_{\alpha\le \beta} c_\alpha d_\beta \binom{\beta}{\alpha}\partial^{\beta-\alpha}.
\]
For any fixed $\mu$ satisfying $g \in \RR[\partial]_{\le \mu}$, we consider the polynomials
\[
 \quad g^\vee(x)\coloneq \sum_\beta d_\beta x^{[\mu-\beta]} \ \ \text{and} \ \ \Big(f(x) \cdot g(\partial)\Big)^\vee\coloneq \sum_{\alpha \le \beta} c_\alpha d_\beta \binom{\beta}{\alpha} x^{[\mu-\beta+\alpha]}.
\]
Then $g$ is a realizable covolume polynomial over $k$ if and only if $g^\vee$ is a realizable volume polynomial over $k$, and $f \cdot g$ is a realizable covolume polynomial over $k$ if and only if $(f \cdot g)^\vee$ is a realizable volume polynomial over $k$.
Consider the homogeneous linear operator
\[
T:\RR[x]_{\le \mu} \longrightarrow \RR[y], \qquad g^\vee(x) \longmapsto \Big(f(y) \cdot g(\delta)\Big)^\vee.
\]
We use the following key formula for the symbol of $T$:\footnote{The same formula appears in \cite[Proof of Proposition 5.4]{RSW23}, considered there in the context of dually Lorentzian polynomials.}
\[
\text{sym}_T(x,y)=\frac{1}{\mu!}\Big[f(x)(x+y)^\mu\Big]_{\le (\mu,\mu)}.
\]
Since both sides of the identity are linear in $f$, it is enough to verify the identity when $f$ is a monomial, say $x^\gamma$, and the verification is straightforward in this case:
\begin{align*}
\text{sym}_T(x,y)&=\sum_{\alpha \le \mu} T(x^{[\alpha]}) x^{[\mu-\alpha]}=\sum_{\alpha \le \mu-\gamma} \frac{(\mu-\alpha)!}{(\mu-\alpha-\gamma)!} x^{[\mu-\alpha]} y^{[\alpha+\gamma]}
=\frac{1}{\mu!} \Big[x^\gamma (x+y)^\mu\Big]_{\le (\mu,\mu)}.
\end{align*}
 The symbol is a realizable volume polynomial over $k$ because
the product of realizable volume polynomials over $k$ is a realizable volume polynomial over $k$ (Corollary~\ref{cor:prodofvol}) and
the upper truncation of a realizable volume polynomial over $k$ is a realizable volume polynomial over $k$ (Corollary~\ref{cor:lowerupper}).
It follows  from Theorem~\ref{thm:symbol} that $T$ preserves realizable volume polynomials over $k$. Thus, if  $g$ is a realizable covolume polynomial over $k$, then  $f \cdot g$ is a realizable covolume polynomial over $k$.

We now prove the converse direction. 
Suppose $f \cdot -$ preserves realizable covolume polynomials over $k$.
We choose $\mu$ so that $f \in \RR[x]_{\le \mu}$,
and consider the polynomial
\[
\sum_{\beta \le \mu} \partial^\beta \delta^{\mu-\beta} =\prod_i \left( \sum_{\beta_i \le \mu_i} \partial_i^{\beta_i} \delta_i^{\mu_i-\beta_i} \right)  \in \RR[\partial,\delta].
\]
Since its factors are bivariate Lorentzian forms, the product is a realizable covolume polynomial over $k$.
If $f=\sum_\alpha c_\alpha x^{[\alpha]}$, then
\[
 f(x) \cdot  \left(\sum_{\beta \le \mu} \partial^\beta \delta^{\mu-\beta}\right)=\sum_{\alpha \le \beta \le \mu} c_\alpha \frac{\beta!}{\alpha!} \partial^{[\beta-\alpha]}\delta^{\mu-\beta},
\]
and this is a realizable covolume polynomial over $k$ by the assumption on $f \cdot -$. It follows from Theorem~\ref{lem:linmapAluffi} that
\[
 \left[f(x) \cdot  \left(\sum_{\beta \le \mu} \partial^\beta \delta^{\mu-\beta}\right)\right]_{\partial=0}=\sum_\alpha c_\alpha \delta^{\mu-\alpha}=f^\vee(\delta)
\]
is a realizable covolume polynomial over $k$ as well. Therefore, 
$f(x)=f^\vee (\partial)\circ x^{[\mu]}$ is a realizable volume polynomial over $k$.
\end{proof}

\begin{remark}\label{ex:nvs2n}
The proof of Theorem~\ref{thm:covolumecharacterization} gives the following characterization of covolume polynomials over $k$ in $n$ variables:
\begin{quote}
\emph{
A homogeneous polynomial $g \in \RR[\partial_1,\ldots,\partial_n]$ is a covolume polynomial if and only if $g \circ f$ is a volume polynomial for every volume polynomial $f \in \RR[x_1,\ldots,x_n]$.
}
\end{quote}
On the other hand, the proof of Theorem~\ref{thm:volumecharacterization} gives the following characterization of volume polynomials over $k$ in $n$ variables:
\begin{quote}
\emph{
A homogeneous polynomial $f \in \RR[x_1,\ldots,x_n]$ is a volume polynomial if and only if $f \cdot g$ is a covolume polynomial for every covolume polynomial $g \in \RR[\partial_1,\ldots,\partial_{2n}]$.}
\end{quote}
The ``if'' direction of the latter statement fails if we only test against  $g \in \RR[\partial_1,\ldots,\partial_{n}]$. To see this in the simplest setting,  consider the bivariate polynomial with nonnegative coefficients
\[
f=ax_1^3x_2+bx_1^2x_2^2+cx_1x_2^3=6ax_1^{[3]}x_2+4bx_1^{[2]}x_2^{[2]}+6cx_1x_2^{[3]}.
\]
By \cite[Theorem 21]{HuhMilnor}, $f$ is a volume polynomial if and only if $(4b)^2 \ge (6a)(6c)$, and
\[
g=\sum_{k=1}^d d_k \partial_1^k \partial_2^{d-k}=\sum_{k=1}^d d_k k! (d-k)! \partial_1^{[k]} \partial_2^{[d-k]}
\]
is a covolume polynomial if and only if $(d_k)$ is a nonnegative log-concave sequence with no internal zeros.
A direct but lengthy computation shows that 
$f \cdot g$ is a covolume polynomial for every covolume polynomial $g \in \RR[\partial_1,\partial_2]$ if and only if $b^2 \ge ac$, which is weaker than $(4b)^2 \ge (6a)(6c)$.
\end{remark}

\begin{remark}
The proof of Theorem~\ref{thm:volumecharacterization} yields the following strengthening of \cite[Proposition 5.4]{RSW23}:
\begin{quote}
\emph{
A homogeneous polynomial $f \in \RR[x]$ is a Lorentzian polynomial if and only if $f \cdot g$ is a dually Lorentzian  polynomial for every dually Lorentzian polynomial $g \in \RR[\partial]$.}
\end{quote}
\end{remark}

\section{Linear operators preserving volume polynomials}\label{sec:applications}

We use the symbol theorem for volume polynomials to show that various operators from the theory of Lorentzian polynomials in \cite{BrandenHuh} preserve volume polynomials as well.

For fixed $\mu \in \ZZ^\infty_{\ge 0}$ and $d \in \ZZ_{\ge 0}$, we consider the \emph{polarization operator} 
\[
\Pi^\uparrow: \RR[x,z_0]_{\le (\mu,d)} \longrightarrow \RR[y,z_1,\ldots,z_d], \qquad z_0^k x^\alpha  \longmapsto \frac{e_k(z_1,\ldots,z_d)}{\binom{d}{k}} y^\alpha,
\]
where $e_k(z_1,\ldots,z_d)$ is the $k$-th elementary symmetric polynomial in $(z_1,\ldots,z_d)$.
Recall that a polynomial is said to be \emph{multiaffine} if each variable appears with degree at most one. 
Any polynomial can be made multiaffine by successive applications of the polarization operator.
The following statement parallels that for Lorentzian polynomials in \cite[Proposition 3.1]{BrandenHuh}.

\begin{proposition}\label{prop:polarization}
A polynomial $f$ is a realizable volume polynomial over $k$ if and only if $\Pi^\uparrow(f)$ is a realizable volume polynomial over $k$.
\end{proposition}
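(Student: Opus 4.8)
The plan is to derive both implications from the symbol theorem (Theorem~\ref{thm:symbol}) together with the invariance of realizable volume polynomials under nonnegative linear substitutions (Proposition~\ref{prop:linmap}). First I would note that $\Pi^\uparrow$ is homogeneous: it sends $z_0^k x^\alpha$ to $\frac{e_k(z_1,\dots,z_d)}{\binom{d}{k}}y^\alpha$, which has the same degree $k+|\alpha|$, so its symbol is defined. I would then compute the symbol directly on normalized monomials. Since $x^{[\alpha]}z_0^{[k]}=\frac{1}{\alpha!k!}z_0^kx^\alpha$ and $\tfrac{1}{\binom{d}{k}}=\tfrac{k!(d-k)!}{d!}$, one gets $\Pi^\uparrow\big(x^{[\alpha]}z_0^{[k]}\big)=\frac{(d-k)!}{d!}\,e_k(z_1,\dots,z_d)\,y^{[\alpha]}$; multiplying by $x^{[\mu-\alpha]}z_0^{[d-k]}$, summing over $\alpha\le\mu$ and $k\le d$, and using $\sum_{k=0}^d e_k(z)z_0^{d-k}=\prod_{j=1}^d(z_0+z_j)$ together with $\sum_{\alpha\le\mu}x^{[\mu-\alpha]}y^{[\alpha]}=\prod_i\frac{(x_i+y_i)^{\mu_i}}{\mu_i!}$, this yields
\[
\text{sym}_{\Pi^\uparrow}=\frac{1}{d!\,\prod_i\mu_i!}\;\Big(\prod_{j=1}^d(z_0+z_j)\Big)\prod_i(x_i+y_i)^{\mu_i}.
\]

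Next I would observe that this symbol is a realizable volume polynomial over any field $k$: each linear factor $z_0+z_j$ and $x_i+y_i$ is the volume polynomial $f_{(\mathrm{pt},\mathrm{pt})}$ on $\PP^1$, hence a realizable volume polynomial over $k$; the product of realizable volume polynomials over $k$ is again one by Corollary~\ref{cor:prodofvol}; and scaling by the positive rational $\tfrac{1}{d!\prod_i\mu_i!}$ preserves realizability by definition. Theorem~\ref{thm:symbol} then gives that $\Pi^\uparrow$ sends realizable volume polynomials over $k$ (in $\RR[x,z_0]_{\le(\mu,d)}$ for suitable $\mu,d$) to realizable volume polynomials over $k$, which is the forward direction.

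For the converse I would use depolarization. The substitution $z_1=\dots=z_d=z_0$, $y_i=x_i$ sends $\frac{e_k(z_1,\dots,z_d)}{\binom{d}{k}}y^\alpha$ to $\frac{\binom{d}{k}z_0^k}{\binom{d}{k}}x^\alpha=z_0^kx^\alpha$, so it carries $\Pi^\uparrow(f)$ back to $f$ exactly. Being a linear substitution with nonnegative rational (indeed $\{0,1\}$) coefficients, Proposition~\ref{prop:linmap} applies and shows that if $\Pi^\uparrow(f)$ is a realizable volume polynomial over $k$ then so is $f$. (The analogous statement for volume polynomials follows by taking limits.)

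The only genuinely computational step is the symbol identity, which is routine bookkeeping with the normalization constants $\alpha!$, $k!$, and $\binom{d}{k}$; the point to get right is that these constants conspire so that the $z$-part of the symbol is exactly $\prod_{j=1}^d(z_0+z_j)$ and that the depolarization substitution returns $f$ with no stray scalar. Everything else is a direct appeal to Theorem~\ref{thm:symbol}, Corollary~\ref{cor:prodofvol}, and Proposition~\ref{prop:linmap}.
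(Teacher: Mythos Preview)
Your proof is correct and follows essentially the same approach as the paper: compute the symbol of $\Pi^\uparrow$ to be $\frac{1}{d!\,\mu!}(x+y)^\mu\prod_{j=1}^d(z_0+z_j)$, observe it is a product of realizable volume polynomials and invoke Theorem~\ref{thm:symbol} for the forward direction, and use the depolarization substitution together with Proposition~\ref{prop:linmap} for the converse. The only cosmetic difference is that you fully factor $(x+y)^\mu$ into linear forms, whereas the paper leaves it grouped; both are fine.
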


\begin{proof}
Note that $f$ can be recovered from  $\Pi^\uparrow(f)$ by setting $z_0=z_1=\cdots=z_d$. Therefore,
if $\Pi^\uparrow(f)$ is a realizable volume polynomial, then $f$ is a realizable volume polynomial by Proposition~\ref{prop:linmap}.

For the other implication, we compute the symbol of the polarization operator:
\[
\text{sym}_{\Pi^\uparrow}=\sum_{(\alpha,k) \le (\mu,d)} \Pi^\uparrow\Big(z_0^{[k]} x^{[\alpha]}\Big)\, z_0^{[d-k]} x^{[\mu-\alpha]} 
=\frac{1}{d!} \frac{1}{\mu!} (x+y)^\mu \prod_{i=1}^d (z_0+z_i).
\]
Since each factor is a realizable volume polynomial over $k$, the product is a realizable volume polynomial over $k$. 
The conclusion follows from the symbol theorem for volume polynomials (Theorem~\ref{thm:symbol}).
\end{proof}

The next result improves {\cite[Corollary~3.3]{ross2025diagonalizations}}.
See \cite[Corollary 3.7]{BrandenHuh} for the corresponding statement for Lorentzian polynomials.

\begin{proposition}
  \label{prop:normisvol}
  If $f$ is a realizable volume polynomial over $k$, then the normalization $N(f)$ is a realizable volume polynomial over $k$ 
\end{proposition}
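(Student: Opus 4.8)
The plan is to realize the normalization operator $N$ (restricted to $\RR[x]_{\le\mu}$) as a homogeneous linear operator whose symbol is a realizable volume polynomial over $k$, and then invoke the symbol theorem (Theorem~\ref{thm:symbol}). Fix $\mu\in\ZZ^\infty_{\ge 0}$ with $f\in\RR[x]_{\le\mu}$ and consider the operator $N:\RR[x]_{\le\mu}\to\RR[y]$ given by $x^\alpha\mapsto y^{[\alpha]}$ for $\alpha\le\mu$; one checks this is homogeneous of degree shift $0$. Since $N(x^{[\alpha]})=\frac{1}{\alpha!}y^{[\alpha]}$, the symbol is
\[
\text{sym}_N(x,y)=\sum_{\alpha\le\mu} N(x^{[\alpha]})\,x^{[\mu-\alpha]}=\sum_{\alpha\le\mu}\frac{1}{\alpha!}\,x^{[\mu-\alpha]}y^{[\alpha]},
\]
which factors over the variables: $\text{sym}_N(x,y)=\prod_i\Big(\sum_{\alpha_i\le\mu_i}\frac{1}{\alpha_i!}x_i^{[\mu_i-\alpha_i]}y_i^{[\alpha_i]}\Big)$. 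So it suffices to show each univariate-in-pairs factor $\sum_{k\le m}\frac{1}{k!}x^{[m-k]}y^{[k]}$ is a realizable volume polynomial over $k$, since the product of such is realizable by Corollary~\ref{cor:prodofvol}.

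For each factor, by \cite[Theorem 21]{HuhMilnor} it is enough to verify that it is a Lorentzian bivariate form, i.e. that the coefficient sequence is nonnegative, has no internal zeros, and is log-concave. Writing the factor as $\sum_{k=0}^m c_k\frac{x^{k}}{k!}\frac{y^{m-k}}{(m-k)!}$ — wait, the normalized-monomial bookkeeping needs care: expanding $\frac{1}{k!}x^{[m-k]}y^{[k]}=\frac{1}{k!}\cdot\frac{x^{m-k}}{(m-k)!}\cdot\frac{y^k}{k!}$, so in the convention $\sum_k c_k \frac{x^{m-k}}{(m-k)!}\frac{y^k}{k!}$ the coefficient is $c_k=\frac{1}{k!}$. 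These are positive (no internal zeros) and $c_k^2=\frac{1}{(k!)^2}\ge \frac{1}{(k-1)!(k+1)!}=c_{k-1}c_{k+1}$ since $(k+1)!(k-1)!\ge(k!)^2$, i.e. $k+1\ge k$. Hence the sequence is log-concave and the factor is a Lorentzian bivariate form, so a realizable volume polynomial over $k$ by \cite[Theorem 21]{HuhMilnor}.

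Therefore $\text{sym}_N$ is a realizable volume polynomial over $k$, and Theorem~\ref{thm:symbol} shows $N$ sends realizable volume polynomials over $k$ to realizable volume polynomials over $k$; in particular $N(f)$ is one. The only subtle point — and the step I would double-check most carefully — is the passage between the two monomial normalizations ($x^\alpha$ versus $x^{[\alpha]}$) in computing $\text{sym}_N$, since the symbol is defined using the normalized monomials $x^{[\mu-\alpha]}$ and $T(x^{[\alpha]})$, and a stray factorial there would change the coefficient sequence and potentially break log-concavity; but as the computation above shows, the resulting coefficients $1/k!$ are still log-concave, so the argument goes through. An alternative to sidestep the bookkeeping entirely is to observe that $N$ restricted to $\RR[x]_{\le\mu}$ equals the composition of coordinatewise one-variable normalizations, each of which has symbol a manifestly Lorentzian bivariate form, and compose using Theorem~\ref{thm:symbol} one variable at a time.
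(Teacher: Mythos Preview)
Your proof is correct and follows essentially the same approach as the paper: compute the symbol of $N$ on $\RR[x]_{\le\mu}$ as $\sum_{\alpha\le\mu}\frac{1}{\alpha!}x^{[\mu-\alpha]}y^{[\alpha]}$, factor it into bivariate pieces with coefficient sequence $c_k=1/k!$, observe this sequence is log-concave so each factor is a Lorentzian bivariate form and hence a realizable volume polynomial by \cite[Theorem~21]{HuhMilnor}, and conclude via Corollary~\ref{cor:prodofvol} and Theorem~\ref{thm:symbol}. Your hedging about the factorial bookkeeping is unnecessary---the computation $N(x^{[\alpha]})=\frac{1}{\alpha!}y^{[\alpha]}$ is exactly right and matches the paper's symbol formula verbatim.
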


\begin{proof}
For any $\mu \in \ZZ^\infty_{\ge 0}$, the symbol of the normalization operator on $\RR[x]_{\le \mu}$ is the product
\[
\sum_{0\le \alpha \le \mu} \frac{1}{\alpha!} x^{[\mu-\alpha]} y^{[\alpha]}=\prod_{i} \left( \sum_{0\le \alpha_i \le \mu_i} \frac{1}{\alpha_i!} x_i^{[\mu_i-\alpha_i]} y_i^{[\alpha_i]} \right).
\]
Since $\frac{1}{a!}$ is a log-concave sequence in $a$, the conclusion follows from \cite[Theorem 21]{HuhMilnor}, combined with Corollary~\ref{cor:prodofvol} and Theorem~\ref{thm:symbol} as above.
\end{proof}

For  any $t \in \RR_{\ge 0}$ and  $\mu \in \ZZ^\infty_{\ge 0}$, we consider the \emph{interlacing operator}
\[
T_{12}(t):\RR[x]_{\le \mu} \longrightarrow \RR[y], \qquad f(x) \longmapsto (1+t y_1\delta_2)f(y).
\]
The following statement parallels that for Lorentzian polynomials in \cite[Proposition 2.7]{BrandenHuh}.

\begin{proposition}\label{prop:interlacing}
A polynomial $f$ is a realizable volume polynomial over $k$ if and only if $(1+tx_1\partial_2)f$ is a realizable volume polynomial over $k$ for all nonnegative rational number $t$.
\end{proposition}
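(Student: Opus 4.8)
The plan is to derive both implications from the symbol theorem for volume polynomials (Theorem~\ref{thm:symbol}). The reverse implication is immediate: since $0\in\QQ_{\ge0}$ and $(1+0\cdot x_1\partial_2)f=f$, the hypothesis that $(1+tx_1\partial_2)f$ is a realizable volume polynomial over $k$ for \emph{every} nonnegative rational $t$ already contains the statement that $f$ itself is one. So the content is the forward implication: if $f$ is a realizable volume polynomial over $k$ and $t\in\QQ_{\ge0}$, then $(1+tx_1\partial_2)f$ is a realizable volume polynomial over $k$.

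Fix such an $f$ and $t$, choose $\mu\in\ZZ^\infty_{\ge0}$ with $\mu_2\ge1$ and $f\in\RR[x]_{\le\mu}$, and view $T_{12}(t)\colon\RR[x]_{\le\mu}\to\RR[y]$ as in the statement; it is homogeneous with zero degree shift, so Theorem~\ref{thm:symbol} applies once we understand its symbol. First I would evaluate $T_{12}(t)$ on normalized monomials using the stated rules for the differential-operator actions, together with $y_1\cdot y^{[\beta]}=(\beta_1+1)\,y^{[\beta+e_1]}$, obtaining $T_{12}(t)(x^{[\alpha]})=y^{[\alpha]}+t(\alpha_1+1)\,y^{[\alpha+e_1-e_2]}$, with the second term absent when $\alpha_2=0$. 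Summing $T_{12}(t)(x^{[\alpha]})\,x^{[\mu-\alpha]}$ over $0\le\alpha\le\mu$, the contribution of the first term is $\prod_i\frac{(x_i+y_i)^{\mu_i}}{\mu_i!}$ by the binomial identity, and reindexing the second by $\beta=\alpha+e_1-e_2$ (using $\beta_1\,y_1^{[\beta_1]}=y_1\cdot y_1^{[\beta_1-1]}$ and again the binomial identity) collapses it to $t\,y_1\cdot\frac{(x_1+y_1)^{\mu_1}}{\mu_1!}\cdot\frac{(x_2+y_2)^{\mu_2-1}}{(\mu_2-1)!}\cdot\prod_{i\ge3}\frac{(x_i+y_i)^{\mu_i}}{\mu_i!}$. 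Pulling out the common factors should give
\[
\text{sym}_{T_{12}(t)}(x,y)=\frac{(x_1+y_1)^{\mu_1}}{\mu_1!}\cdot\frac{(x_2+y_2)^{\mu_2-1}}{(\mu_2-1)!}\cdot\Bigl(\tfrac{x_2+y_2}{\mu_2}+t\,y_1\Bigr)\cdot\prod_{i\ge3}\frac{(x_i+y_i)^{\mu_i}}{\mu_i!}.
\]

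Granting this factorization, the rest is routine. Each factor $\frac{(x_i+y_i)^m}{m!}=\sum_k x_i^{[k]}y_i^{[m-k]}$ is a Lorentzian bivariate form with rational coefficients, hence a realizable volume polynomial over $k$ by \cite[Theorem 21]{HuhMilnor}; and $\tfrac{x_2+y_2}{\mu_2}+t\,y_1$ is a nonnegative rational linear form, which is a realizable volume polynomial over $k$ because it is a nonnegative rational multiple of the volume polynomial of suitable base-point-free divisors on $\PP^1$. Applying Corollary~\ref{cor:prodofvol} repeatedly, $\text{sym}_{T_{12}(t)}$ is a realizable volume polynomial over $k$, so Theorem~\ref{thm:symbol} shows $T_{12}(t)$ sends realizable volume polynomials over $k$ to realizable volume polynomials over $k$. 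In particular $T_{12}(t)(f)=(1+t\,y_1\delta_2)f(y)$ is a realizable volume polynomial over $k$, and since realizability is unaffected by renaming the variables $y_i$ back to $x_i$ (a special case of Proposition~\ref{prop:linmap}), it follows that $(1+tx_1\partial_2)f$ is a realizable volume polynomial over $k$. I expect the only genuinely delicate point to be the symbol computation: getting the summation ranges right in the reindexing $\beta=\alpha+e_1-e_2$ and verifying the claimed product factorization; everything downstream is a direct appeal to results already established in the paper.
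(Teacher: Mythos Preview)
Your argument is correct and is essentially the paper's own proof: both directions are handled identically (the reverse by setting $t=0$, the forward by computing the symbol of $T_{12}(t)$ and applying Theorem~\ref{thm:symbol} together with Corollary~\ref{cor:prodofvol}), and your factorization of the symbol agrees with the paper's $\frac{1}{\mu!}(x+y)^{\mu-e^2}(x_2+y_2+t\mu_2 y_1)$ after clearing the constant. The paper reaches this factorization in one line by observing that $\text{sym}_{T_{12}(t)}=\frac{1}{\mu!}\bigl((x+y)^\mu+ty_1\delta_2(x+y)^\mu\bigr)$, which shortcuts your monomial-by-monomial reindexing, but the content is the same.
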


\begin{proof}
One direction is obtained by setting $t=0$. 
For the other direction, we compute the symbol of the interlacing operator:
\[
\text{sym}_{T_{12}(t)}=\frac{1}{\mu!} \Big((x+y)^\mu+ t y_1 \delta_2 (x+y)^\mu\Big)=\frac{1}{\mu!}(x+y)^{\mu-e^2} \Big(x_2+y_2+t\mu_2 y_1\Big),
\]
where, as before, $e^i$ stands for the $i$-th standard basis vector of $\ZZ^\infty$. 
Since each factor is a realizable volume polynomial over $k$, the product is a realizable volume polynomial over $k$. 
\end{proof}

The following operator $\Phi_{12}(t)$ occurs in the context of the \emph{symmetric exclusion process}:
\[
\Phi_{12}(t):\RR[x] \longrightarrow \RR[y], \quad f \longmapsto t f(y_1,y_2,y_3,\ldots,y_n)+(1-t)f(y_2,y_1,y_3,\ldots,y_n).
\]
It plays a key role in Br\"and\'en's proof of the fact that the space of Lorentzian polynomials $L^d_n$ modulo $\RR_{>0}$ is homeomorphic to a compact ball \cite{BrandenBall}. The following statement parallels that for Lorentzian polynomials in \cite[Proposition 3.9]{BrandenHuh}.

\begin{proposition}\label{prop:symexpro}
  For every rational number $0 \leq t \leq 1$, the linear operator 
  $\Phi_{12}(t)$ preserves multiaffine realizable volume polynomials over $k$.
\end{proposition}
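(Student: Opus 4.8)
The plan is to invoke the symbol theorem for realizable volume polynomials (Theorem~\ref{thm:symbol}), following the template of the proofs of Propositions~\ref{prop:polarization}, \ref{prop:normisvol}, and~\ref{prop:interlacing}. Fix the number of variables $n$ and put $\mathbf{1}=(1,\ldots,1)\in\ZZ^n_{\ge 0}$, so that the multiaffine polynomials in $x_1,\ldots,x_n$ are exactly the elements of $\RR[x]_{\le\mathbf{1}}$. The operator $\Phi_{12}(t)$ restricts to a linear operator $T\colon\RR[x]_{\le\mathbf{1}}\to\RR[y]_{\le\mathbf{1}}$ that is homogeneous of degree $0$, since permuting variables and taking convex combinations preserves degree. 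By Theorem~\ref{thm:symbol} it then suffices to show that $\mathrm{sym}_T$ is a realizable volume polynomial over $k$.

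First I would compute the symbol explicitly. Write $\sigma$ for the transposition interchanging the indices $1$ and $2$, and for $S\subseteq[n]$ write $x^S=\prod_{i\in S}x_i$. Then $T(x^S)=t\,y^S+(1-t)\,y^{\sigma(S)}$, so
\[
\mathrm{sym}_T(x,y)=\sum_{S\subseteq[n]}\bigl(t\,y^S+(1-t)\,y^{\sigma(S)}\bigr)\,x^{[n]\setminus S}.
\]
The $t$-part sums to $t\prod_{i=1}^n(x_i+y_i)$. For the $(1-t)$-part, reindexing by $S'=\sigma(S)$ and splitting off the indices $\{1,2\}$ from the rest gives $(1-t)\,(x_1+y_2)(x_2+y_1)\prod_{i=3}^n(x_i+y_i)$, the case analysis over $S\cap\{1,2\}$ producing the factor $x_1x_2+x_1y_1+x_2y_2+y_1y_2=(x_1+y_2)(x_2+y_1)$. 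Hence
\[
\mathrm{sym}_T=\Bigl(\prod_{i=3}^n(x_i+y_i)\Bigr)\,\Bigl(t\,(x_1+y_1)(x_2+y_2)+(1-t)\,(x_1+y_2)(x_2+y_1)\Bigr).
\]
Each linear factor $x_i+y_i$ is a degree-one Lorentzian bivariate form, hence a realizable volume polynomial over $k$ by \cite[Theorem 21]{HuhMilnor}; so by Corollary~\ref{cor:prodofvol} the task reduces to showing that the quadratic factor
\[
Q=t\,(x_1+y_1)(x_2+y_2)+(1-t)\,(x_1+y_2)(x_2+y_1)
\]
is a realizable volume polynomial over $k$ in the four variables $x_1,x_2,y_1,y_2$.

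By \cite[Theorem 1.8]{HHMWW} this is equivalent to $Q$ being a Lorentzian quadratic form with rational coefficients. Expanding, $Q=x_1x_2+y_1y_2+(1-t)(x_1y_1+x_2y_2)+t(x_1y_2+x_2y_1)$, which has nonnegative rational coefficients because $t\in\QQ$ and $0\le t\le 1$. For the Hessian I would use the identity
\[
Q=\bigl(x_1+ty_1+(1-t)y_2\bigr)\bigl(x_2+(1-t)y_1+ty_2\bigr)-t(1-t)(y_1-y_2)^2,
\]
verified by a direct expansion. Writing the first term as $ab$ with $a,b$ the two (linearly independent) linear forms, we have $ab=\tfrac14\bigl((a+b)^2-(a-b)^2\bigr)$, so $Q=\tfrac14(a+b)^2-\bigl[\tfrac14(a-b)^2+t(1-t)(y_1-y_2)^2\bigr]$ is a positive-semidefinite rank-one form minus a positive-semidefinite form (here $0\le t\le 1$ is used), and therefore has at most one positive eigenvalue. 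Thus $Q$ is Lorentzian, hence a realizable volume polynomial over $k$, and so is $\mathrm{sym}_T$; Theorem~\ref{thm:symbol} then yields the proposition.

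The combinatorial bookkeeping in the symbol computation is routine. The one genuinely substantive point, and the place where the hypothesis $0\le t\le 1$ is indispensable, is recognizing the factorization of $Q$ above --- equivalently, observing that the Hessian of $Q$ has spectrum $\{2,0,-2t,-2(1-t)\}$. I expect this to be the main (though modest) obstacle.
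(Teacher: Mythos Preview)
Your proof is correct and follows the same approach as the paper: compute the symbol of $\Phi_{12}(t)$ on $\RR[x]_{\le\mathbf{1}}$, factor out $\prod_{i\ge 3}(x_i+y_i)$, and show the remaining quadratic $Q$ is Lorentzian so that \cite[Theorem 1.8]{HHMWW} and Theorem~\ref{thm:symbol} finish the argument. The only difference is cosmetic: the paper verifies that $Q$ is Lorentzian by writing down the $4\times 4$ Hessian and listing its eigenvalues $2,0,-2t,-2(1-t)$, whereas you reach the same conclusion via the decomposition $Q=ab-t(1-t)(y_1-y_2)^2$ and a signature argument---which you yourself note is equivalent to that eigenvalue computation.
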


As in the case of Lorentzian polynomials, the multiaffine hypothesis is necessary. For example, $x_1^2$ is a volume polynomial, but $\frac{1}{2}(x_1^2+x_2^2)$ is not a volume polynomial.

\begin{proof}
The symbol of the linear operator $\Phi_{12}(t)$ on $\RR[x_1,\ldots,x_n]_{\le(1,\ldots,1)}$ is
\[
\text{sym}_{\Phi_{12}(t)}=    \Big(t (x_1+y_1)(x_2+y_2) + (1-t)(x_2+y_1)(x_1+y_2)\Big)\prod_{i=3}^n(x_i+y_i).
\]
It suffices to show that the first factor is a realizable volume polynomial over $k$. Its Hessian is
\[
\scalebox{0.8}{$
\begin{pmatrix}
0      & 1      & 1 - t  & t      \\
1      & 0      & t      & 1 - t  \\
1 - t  & t      & 0      & 1      \\
t      & 1 - t  & 1      & 0
\end{pmatrix},$}
\]
which has eigenvalues $2,0,2(t-1),-2t$. Thus, the Hessian is Lorentzian for every $0 \le t \le 1$. By \cite[Theorem 1.8]{HHMWW}, every quadratic Lorentzian polynomial with rational coefficients is a realizable volume polynomial over $k$, and this finishes the proof.
\end{proof}

As in Section~\ref{sec:introduction}, 
the versions of Propositions~\ref{prop:polarization}, ~\ref{prop:normisvol}, ~\ref{prop:interlacing}, and ~\ref{prop:symexpro} 
 for volume polynomials follow by taking limits.

\section{Algebraic matroids and volume polynomials}\label{sec:AlgebraicMatroids}

Recall that the \emph{support} of a polynomial $f$ in $\RR[x]$ is the set of all exponent vectors $\alpha$ such that the monomial $x^\alpha$ appears in $f$ with nonzero coefficient.
We discuss relations between the support of realizable volume polynomials and polymatroids. 
Our statements are valid for any field $k$.
For a reduction to the case of algebraically closed fields, see Proposition~\ref{prop:ArbitraryField0}.

We first recall the definition and  basic properties of polymatroids \cite[Chapter 18]{WelshMatroidTheory}. 
Let $E$ be a finite set. For any subset $A \subseteq E$ and any vector $\alpha \in \ZZ^E_{\ge 0}$, we set $\alpha_A \coloneq \sum_{i \in A} \alpha_i$.

\begin{definition}\label{def:polymatroid}
A function $h:2^E \to \ZZ_{\ge 0}$ is a \emph{polymatroid rank function} if it satisfies the following properties:
\begin{enumerate}[(1)]\itemsep 5pt
  \item \emph{Normalization:} $h(\varnothing) = 0$.
  \item \emph{Monotonicity:} 
    $h(A) \le h(B)$ for all $A \subseteq B \subseteq E$.
  \item \emph{Submodularity:} 
    $h(A\cup B) + h(A\cap B) \le h(A) + h(B)$ for all $A,B \subseteq E$.
\end{enumerate}
A polymatroid rank function $h$ is a \emph{matroid rank function} if $h(A) \le |A|$ for all $A\subseteq E$.
\end{definition}

\begin{definition}
A subset $J$ of $\ZZ^E_{\ge 0}$ is
\emph{$\mathrm{M}$-convex} if it satisfies the \emph{symmetric basis-exchange property}:
For any $i \in E$ and $\alpha,\beta \in J$ whose $i$-th coordinate satisfy $\alpha_i>\beta_i$, there is $j \in E$ satisfying 
\[
\alpha_j<\beta_j \ \ \text{and} \ \ \alpha-e^i+e^j \in J \ \ \text{and} \ \ \beta-e^j+e^i \in J,
\]
where $e^i$ denotes the $i$-th standard basis vector of $\ZZ^E$.
\end{definition}

We recall the standard bijection between polymatroid rank functions on $E$ and nonempty $\mathrm{M}$-convex subsets of $\ZZ^E_{\ge 0}$ from \cite[Chapter 4]{Murota}. Firstly, a polymatroid rank function $h$ gives 
\[
J_{h}
\coloneq
\left\{
\alpha \in \mathbb{Z}_{\ge0}^E
\;\middle|\;
\alpha_E= h(E) \ \ \text{and} \ \  
\alpha_A \le h(A)\ \text{for all $A\subseteq E$}
\right\}.
\]
Then $J_h$ is an $\mathrm{M}$-convex subset of $\ZZ^E_{\ge 0}$.
Conversely, an $\mathrm{M}$-convex subset $J$ of $\ZZ^E_{\ge 0}$ defines 
\[
h_J:2^E \longrightarrow \ZZ_{\ge 0}, \quad h_J(A)\coloneq \max\big\{ \beta_A  \mid \text{$\beta\le \alpha$ for some $\alpha \in J$}\big\}.
\]
Then $h_J$ is a polymatroid rank function on $E$.
The constructions $J_h$ and $h_J$ are mutually inverse, providing a polymatroid generalization of the classical cryptomorphism between the  matroid rank function axioms and the symmetric basis-exchange property.
A \emph{polymatroid} $\mathscr{P}$ is a pair $(h=h_J,J=J_h)$, where
 $h$ is the \emph{rank function} of $\mathscr{P}$ and $J$ is the \emph{set of bases} of $\mathscr{P}$.\footnote{What we call a polymatroid is termed an integral polymatroid in \cite[Chapter 18]{WelshMatroidTheory} and \cite[Chapter 12]{MonomialIdeals}. This paper only considers integral polymatroids. In \cite[Chapter 4]{Murota}, the bijection between $h$ and $J$ is formulated more generally for integral submodular functions on $E$ satisfying the normalization condition and nonempty $\mathrm{M}$-convex subsets of $\ZZ^E$.}
A polymatroid $\mathscr{P}$ is a \emph{matroid} if $h$ is a matroid rank function, or equivalently if $J$ consists of zero-one vectors.

An \emph{algebraic matroid over $k$} is a matroid that arises from algebraic independence relations over a field $k$: Given a field extension $F$ of $k$ and a finite subset $E$ of $F$,
 the corresponding algebraic matroid on $E$ has as its independent sets those subsets of $E$ that are algebraically independent over $k$. For basic results on this classical subject, see \cite[Chapter 11]{WelshMatroidTheory} and \cite[Section 6.7]{OxleyMatroidTheory}.

The following definition from \cite{MultiDegreePositive} extends that of algebraic matroids.
For field extensions $F_i \subseteq F$  for $i \in E$, we write $F_A$
for the \emph{composite} of $\{F_i\}_{i \in A}$ in $F$, that is, the smallest subfield of $F$ containing $F_i$ for all $i\in A$.

\begin{definition}\label{def:AlgebraicPolymatroid}
Let $h$ be the rank function of a polymatroid $\mathscr{P}$ on $E$. 
We say that $\mathscr{P}$ is  \emph{algebraic over $k$} if there are field extensions $k \subseteq F_i \subseteq F$  for $i \in E$
 such that 
 \[
h(A)= \trdeg_k (F_A) \ \ \text{for all $A \subseteq E$}.
 \]
\end{definition}

We characterize algebraic polymatroids in terms of their sets of bases.
This gives an affirmative answer to \cite[Question 5.8]{MultiDegreePositive}.

\begin{proposition}\label{prop:algebraicPolymatroid}
 The following statements are equivalent for any polymatroid $\mathscr{P}=(h,J)$ and any field $k$.
    \begin{enumerate}[(1)]\itemsep 5pt
        \item There is a realizable volume polynomial over $k$ whose support is $J$. 
        \item The polymatroid  $\mathscr{P}$ is algebraic over $k$.  
    \end{enumerate}
\end{proposition}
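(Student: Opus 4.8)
The plan is to use the \emph{basic construction} to translate the proposition into a statement about multidegrees of subvarieties of products of projective spaces, and then to invoke the identification of the support of such a multidegree polynomial with the set of bases of an explicit ``image-dimension'' polymatroid. Fix $E=[n]$, and for an irreducible subvariety $X\subseteq\PP^\mu=\prod_{i\in E}\PP^{\mu_i}$ of dimension $d$ write $\pi_A\colon\PP^\mu\to\prod_{i\in A}\PP^{\mu_i}$ for the projection, $h_i$ for the hyperplane class of the $i$-th factor, and $f_X=\frac{1}{d!}\int_X(\sum_i x_ih_i)^d=\sum_\alpha\deg_\alpha(X)\,x^{[\alpha]}$ for the multidegree polynomial, with support $\mathrm{supp}(f_X)$. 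The key geometric input is that \emph{$\mathrm{supp}(f_X)$ is the set of bases of the polymatroid on $E$ whose rank function is $A\mapsto\dim\overline{\pi_A(X)}$}. The inclusion $\max_{\alpha\in\mathrm{supp}(f_X)}\alpha_A\le\dim\overline{\pi_A(X)}$ is elementary, since $\deg_\alpha(X)>0$ forces $X$ to meet a general linear subspace $\prod_{i\in A}\Lambda_i\times\prod_{i\notin A}\PP^{\mu_i}$ with $\codim\Lambda_i=\alpha_i$, which is impossible once $\dim\overline{\pi_A(X)}<\alpha_A$. The reverse inclusion is the positivity of multidegrees \cite{MultiDegreePositive}; alternatively it follows by induction on $|E|$, taking a nonzero multidegree $\beta$ of a general fiber of $X\to\overline{\pi_A(X)}$ (supported on $E\setminus A$) together with a base $\gamma$ of the ``$A$-part'' with $\gamma_A=\dim\overline{\pi_A(X)}$ and checking $\deg_{\gamma+\beta}(X)>0$. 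Since the cryptomorphism between polymatroid rank functions and $\mathrm{M}$-convex sets is already in place, this input says exactly that the rank function $h$ of $\mathscr P$ agrees with $A\mapsto\dim\overline{\pi_A(X)}$ whenever $J=\mathrm{supp}(f_X)$.

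For $(1)\Rightarrow(2)$: given a realizable volume polynomial $f=\sum_{\alpha\in J}c_\alpha x^{[\alpha]}$ over $k$, the basic construction produces a subvariety $X\subseteq\PP^\mu$ with $\lambda[X]=\sum_\alpha c_\alpha[\PP^\alpha]$, equivalently $\lambda\deg_\alpha(X)=c_\alpha$, so that $\mathrm{supp}(f_X)=J$. Put $F=k(X)$ and let $F_i\subseteq F$ be the subfield generated over $k$ by the coordinate ratios of the $i$-th factor, that is, the function field of $\overline{\pi_i(X)}$ inside $k(X)$. The composite $F_A$ is then the function field of $\overline{\pi_A(X)}$, so $\trdeg_kF_A=\dim\overline{\pi_A(X)}=h(A)$ for every $A$, and $\mathscr P$ is algebraic over $k$.

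For $(2)\Rightarrow(1)$: suppose $h(A)=\trdeg_kF_A$ for field extensions $k\subseteq F_i\subseteq F$. As only finitely many subsets $A\subseteq E$ occur, we may replace each $F_i$ by the subfield it generates over $k$ together with finitely many elements that witness transcendence bases of the various $F_A$; this makes every $F_i$ finitely generated over $k$ without changing any $\trdeg_kF_A$, and replacing $F$ by the composite $F_E$ changes nothing. Choose projective models $X_i$ of $F_i$ with closed embeddings $X_i\hookrightarrow\PP^{N_i}$ in sufficiently general coordinates, so that the coordinate ratios of $\PP^{N_i}$ generate $F_i$; taking a projective model $\widetilde X$ of $F$, the inclusions $F_i\hookrightarrow F$ give dominant rational maps $\widetilde X\dashrightarrow X_i$, and we let $X\subseteq\prod_i\PP^{N_i}$ be the closure of the image of the resulting rational map $\widetilde X\dashrightarrow\prod_iX_i$. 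Then $k(X)=F$, since $k(X)$ contains every $F_i$ and is contained in $F$, so $\overline{\pi_A(X)}$ has function field $F_A$ and $\dim\overline{\pi_A(X)}=h(A)$ for all $A$. By the key input $\mathrm{supp}(f_X)=J$, and $f_X$ is a realizable volume polynomial over $k$: this is the basic construction with $Y=X$, the semiample (indeed basepoint-free) divisors $D_i=h_i|_X$, and $\lambda=1$. None of this uses that $k$ is algebraically closed, consistent with Proposition~\ref{prop:ArbitraryField0}.

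The main obstacle is the key geometric input: the positivity of multidegrees, equivalently the equality $\dim\overline{\pi_A(X)}=h_J(A)$. The easy half is the containment of supports, while the other half is the substantive point, to be either cited or established by the fibration induction indicated above. A secondary technicality is the reduction in $(2)\Rightarrow(1)$ --- verifying that shrinking the $F_i$ to finitely generated subfields and then passing to a common projective model preserves $\trdeg_kF_A$ for all $A$ simultaneously. Beyond this, the argument is formal, relying only on the basic construction and the polymatroid cryptomorphism already developed in the paper.
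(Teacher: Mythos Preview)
Your proposal is correct and follows essentially the same approach as the paper: both directions rely on the basic construction together with the identification from \cite{MultiDegreePositive} of the support of the multidegree polynomial of $X\subseteq\PP^\mu$ with the bases of the polymatroid $A\mapsto\dim\overline{\pi_A(X)}$. Your $(2)\Rightarrow(1)$ is phrased via projective models and rational maps whereas the paper works through an explicit affine embedding $\Spec(R)\hookrightarrow\mathbb{A}^\mu$ using chosen generators $f_{i,j}$ of each $F_i/k$, but these are the same construction; likewise your reduction to finitely generated $F_i$ is slightly more elaborate than needed (replacing each $F_i$ by any subfield of the same transcendence degree already forces $F_i/F_i'$ to be algebraic, hence every $F_A/F_A'$ is algebraic), but it is not wrong.
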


\begin{proof}
The forward direction follows from \cite[Proposition 5.1]{MultiDegreePositive}, which we briefly summarize here using the current notation. 
Let $f$ be a realizable volume polynomial over $k$ with support $J$.
Let $X$ be a subvariety of $\PP^\mu$ obtained from $f$ by the \emph{basic construction} in Section~\ref{sec:volumecovolume}.
For any $A \subseteq E$, we consider the coordinate projection
\[
\pi_A: \PP^\mu=\prod_{i \in E} \PP^{\mu_i} \longrightarrow \prod_{i \in A} \PP^{\mu_i}.
\]
Let $F$ be the function field of $X$, and let $F_i$ be the function field of the projection of $\pi_i(X)$. The function field of $\pi_A(X)$ is $F_A$. By \cite[Theorem 3.12]{MultiDegreePositive}, we know that the rank function of $\mathscr{P}$ satisfies  
\[
h(A)= \dim \pi_A(X)=\trdeg_k(F_A) \ \ \text{for all $A \subseteq E$}.
\]
This shows that $\mathscr{P}$ is algebraic over $k$.

For the other implication, suppose that there are field extensions $k \subseteq F_i \subseteq F$ such that
 \[
h(A)= \trdeg_k(F_A) \ \ \text{for all $A \subseteq E$.}
 \] 
We construct a subvariety $X$ of a product of projective spaces over $k$ such that the associated realizable volume polynomial has the support  $J$.
Since the composite of algebraic extensions is an algebraic extension, we may replace $F_i$ with a finitely generated subfield of the same transcendence degree over $k$ to assume without loss of generality that each $F_i$ is a finitely generated extension over $k$.
Choose a finite set of generators $f_{i,1},\ldots,f_{i,\mu_i}$ of each field extension $F_i/k$, and let $R$ be the $k$-subalgebra of $F$ generated by $f_{i,j}$ over $k$. The functions $f_{i,j}$ on the specturm of $R$ define a closed immersion
\[
 \Spec(R)
\;\xrightarrow{\quad f_{i,j}\quad}\;
\mathbb A^\mu
\;=\;
\prod_{i\in E}\mathbb A^{\mu_i}.
\]
Writing $X$ for the closure of the image of this map in the standard compactification $\PP^\mu$ of $\mathbb{A}^\mu$, 
\[
\dim \pi_A(X)=\dim \Spec\, k[f_{i,j}]_{i \in A}=\trdeg_k (F_A)=h(A) \ \ \text{for any $A \subseteq E$.}
\]
By  \cite[Theorem 3.12]{MultiDegreePositive},  the support of  the realizable volume polynomial associated to $X \subseteq \PP^\mu$ is equal to $J$. 
\end{proof}

In particular, a matroid $M$ is algebraic over $k$ if and only if there is a multiaffine realizable volume polynomial over $k$ whose terms with nonzero coefficients correspond to the bases of $M$.

\begin{example}\label{ex:Fano}
The Fano matroid $F_7$ is the matroid on seven elements whose bases are the three-element subsets that are not colinear in the following picture of the Fano plane:
\begin{center}
\begin{tikzpicture}[scale=1.2]
\draw (0,0) -- (2,0);
\draw (2,0) -- (1,1.73);
\draw (1,1.73) -- (0,0);
\draw (0,0) -- (1.5,.866);
\draw (2,0) -- (.5,.866);
\draw (1,1.73) -- (1,0);
\draw (1,0.577) circle [radius=0.577];
\draw [fill=black] (0,0) circle (1.5pt); 
\draw [fill=black] (2,0) circle (1.5pt); 
\draw [fill=black] (1,1.73) circle (1.5pt); 
\draw [fill=black] (1.5,.866) circle (1.5pt); 
\draw [fill=black] (.5,.866) circle (1.5pt); 
\draw [fill=black] (1,0) circle (1.5pt); 
\draw [fill=black] (1,0.57) circle (1.5pt); 
\end{tikzpicture}
\end{center}
The basis generating polynomial $f$ of $F_7$ is a degree $3$ homogeneous polynomial in $7$ variables that has $28$ squarefree monomial summands.
We observe that $f$ is a realizable volume polynomial over $k$ if and only if the characteristic of $k$ is $2$. 
The ``if'' direction follows from the 
the general construction of arrangement Schubert varieties from linear realizations of matroids in \cite[Theorem 1.3]{Ardila-Boocher}. See \cite[Section 3]{HuhICM} for a brief overview.
The ``only if'' direction follows from Proposition~\ref{prop:algebraicPolymatroid}, together with 
Lindstr\"om's theorem  that  the Fano matroid is algebraic over $k$ only if the characteristic of $k$ is $2$  \cite{Lindstrom}.
It is not known whether $f$ is a volume polynomial over $k$ when the characteristic of $k$ is not $2$.
\end{example}

This connection to matroid theory yields several corollaries, which we present using the standard terminology and results in \cite{WelshMatroidTheory} and \cite{OxleyMatroidTheory}. The arguments presented here apply to polymatroids with little or no modification.

\begin{corollary}
If a matroid $M$ is algebraic over $k$, then every minor of $M$ is algebraic over $k$.
\end{corollary}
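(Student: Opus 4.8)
The plan is to use that every minor of $M$ is obtained from $M$ by a sequence of single‑element deletions and contractions, so it suffices to show that $M\setminus e$ and $M/e$ are algebraic over $k$ for each $e\in E$. The deletion case is immediate from Definition~\ref{def:AlgebraicPolymatroid}: if field extensions $k\subseteq F_i\subseteq F$ for $i\in E$ witness that $M$ is algebraic over $k$, then for any $e\in E$ the subfamily $(F_i)_{i\in E\setminus e}$ witnesses that $M\setminus e$ is algebraic over $k$, because the rank function of $M\setminus e$ is simply the restriction to subsets of $E\setminus e$ of the rank function of $M$.

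For the contraction $M/e$ I would first reduce to the case where $e$ is not a loop, since otherwise $M/e=M\setminus e$. Then, by Proposition~\ref{prop:algebraicPolymatroid}, I would choose a realizable volume polynomial $f=\sum_{B\in\mathcal{B}}c_B\,x^{B}$ over $k$ whose support is the set $\mathcal{B}$ of indicator vectors of bases of $M$, where each $c_B\ne 0$; since $M$ is a matroid, every monomial $x^{B}$ is squarefree. The crucial point is that $\partial_e$ realizes contraction at the level of supports: one has $\partial_e\circ f=\sum_{B\in\mathcal{B},\ e\in B}c_B\,x^{B\setminus e}$, distinct bases $B$ containing $e$ yield distinct monomials $x^{B\setminus e}$, so no cancellation occurs, and the support of $\partial_e\circ f$ equals $\{B\setminus e : B\in\mathcal{B},\ e\in B\}$, which is precisely the set of bases of $M/e$. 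Moreover $\partial_e\in\QQ[\partial]$ is a realizable covolume polynomial over $k$, since for $\mu$ with $\mu_e\ge1$ the polynomial $\partial_e\circ x^{[\mu]}$ is a normalized monomial, hence a realizable volume polynomial over $k$, as already used in the proof of Theorem~\ref{thm:covolumecharacterization}. By Theorem~\ref{thm:covolumecharacterization}, $\partial_e\circ f$ is then a realizable volume polynomial over $k$, and a second application of Proposition~\ref{prop:algebraicPolymatroid} shows that $M/e$ is algebraic over $k$.

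I do not anticipate a genuine obstacle here; the content of the statement is carried entirely by Proposition~\ref{prop:algebraicPolymatroid} and Theorem~\ref{thm:covolumecharacterization}, the deletion case being essentially trivial. The only step requiring attention is the routine verification that, when $e$ is not a loop, applying $\partial_e$ to a polynomial supported on the bases of $M$ produces a polynomial supported on the bases of $M/e$, together with the elementary observation that monomial differential operators are realizable covolume polynomials. (An alternative, purely polynomial, treatment of the deletion case would instead set the variable $x_e$ to zero and invoke Proposition~\ref{prop:linmap}, after reducing to the case where $e$ is not a coloop.)
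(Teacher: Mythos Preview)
Your proposal is correct and, for the contraction step, follows exactly the paper's approach: pass to a realizable volume polynomial via Proposition~\ref{prop:algebraicPolymatroid}, apply $\partial_e$, invoke Theorem~\ref{thm:covolumecharacterization}, and translate back. Your treatment of deletion differs slightly---you argue directly from Definition~\ref{def:AlgebraicPolymatroid} rather than setting $x_e=0$ and invoking Proposition~\ref{prop:linmap} as the paper does---but you already flag this polynomial alternative yourself, and your handling of the loop/coloop edge cases is in fact a bit more careful than the paper's terse version.
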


\begin{proof}
Let $f$ be a realizable volume polynomial over $k$ whose support is the set of bases of $M$, viewed as a set of squarefree monomials. Then the derivative $\partial_i f$ is a realizable volume polynomial over $k$ by Theorem~\ref{thm:covolumecharacterization}, and the support of $\partial_i f$ is the set of bases of the contraction $M/i$. Similarly,  the evaluation $f|_{x_i=0}$ is a realizable volume polynomial over $k$ by Proposition~\ref{prop:linmap}, and the support of $f|_{x_i=0}$ is the set of bases of  the deletion of $M \setminus i$.
The conclusion follows from Proposition~\ref{prop:algebraicPolymatroid}.
\end{proof}

This basic statement is typically deduced from a theorem of Lindstr\"om \cite{Lindstrom89}, who proved Piff's conjecture from \cite{Piff} that $M$ is algebraic over $k$ if $M$ is algebraic over $k(t)$. 
See \cite[Corollary 6.7.14]{OxleyMatroidTheory}, and compare \cite[Section 11.3]{WelshMatroidTheory}. 
The following result strengthens Lindstr\"om’s theorem.

\begin{corollary}
The following conditions are equivalent for a matroid $M$ and $p \ge 0$.
\begin{enumerate}[(1)]\itemsep 5pt
\item The matroid $M$ is algebraic over some field of characteristic $p$.
\item The matroid $M$ is algebraic over all fields of characteristic $p$.
\end{enumerate}
\end{corollary}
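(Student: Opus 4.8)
The plan is to deduce the equivalence by combining the two transfer principles already established: Proposition~\ref{prop:algebraicPolymatroid}, which identifies algebraicity of a matroid over $k$ with the existence of a realizable volume polynomial over $k$ having the prescribed support, and Proposition~\ref{prop:ArbitraryField0}, which says that being a realizable volume polynomial depends only on the characteristic of the ground field.

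The implication $(2)\Rightarrow(1)$ is trivial. For $(1)\Rightarrow(2)$, I would argue as follows. Suppose $M$ is algebraic over some field $k_0$ of characteristic $p$. By Proposition~\ref{prop:algebraicPolymatroid}, together with the remark immediately following it that specializes the statement to matroids, there is a (multiaffine) realizable volume polynomial $f$ over $k_0$ whose support is the set of bases of $M$, viewed as a set of zero-one vectors. Now let $k$ be an arbitrary field of characteristic $p$. Since $f$ is a realizable volume polynomial over the characteristic-$p$ field $k_0$, Proposition~\ref{prop:ArbitraryField0} shows that $f$ is a realizable volume polynomial over $k$ as well, with the same support. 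Applying Proposition~\ref{prop:algebraicPolymatroid} once more, this time in the reverse direction, we conclude that $M$ is algebraic over $k$. As $k$ was arbitrary, $M$ is algebraic over every field of characteristic $p$. The same reasoning applies verbatim to polymatroids, replacing ``bases of $M$'' by the $\mathrm{M}$-convex set $J$.

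I do not expect a genuine obstacle here: the argument is a two-step reduction. The one point that must be handled with care is that the polynomial witnessing algebraicity of $M$ depends a priori on the field over which $M$ is algebraic --- it need not be the basis generating polynomial with unit coefficients, as Example~\ref{ex:Fano} already illustrates. This field-dependence is precisely what Proposition~\ref{prop:ArbitraryField0} dissolves, by transporting a fixed realizable volume polynomial, support intact, between any two fields of the same characteristic. With that input in hand, the corollary follows formally.
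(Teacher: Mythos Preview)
Your proof is correct and follows precisely the paper's own argument: the paper's proof consists of the single sentence ``The statement follows from combining Propositions~\ref{prop:ArbitraryField0} and~\ref{prop:algebraicPolymatroid},'' which is exactly the two-step reduction you spell out. Your additional remark about the witnessing polynomial not needing to be the unit-coefficient basis generating polynomial is a correct and helpful clarification, though not required for the argument.
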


This recovers another result of Lindstr\"om \cite{Lindstrom88} that $M$ is algebraic over the prime field of $k$ if $M$ is algebraic over $k$.

\begin{proof}
 The statement follows from combining Propositions~\ref{prop:ArbitraryField0} and ~\ref{prop:algebraicPolymatroid}.
\end{proof}

Recall that the \emph{truncation} of a rank $d$ matroid $M$ is a matroid on the same ground set where a subset is a base if and only if it is an independent set in $M$ with exactly $d-1$ elements.
The next result is a strengthening of a theorem of Piff, who proved that the truncation of a matroid algebraic over $k$ is algebraic over some transcendental extension of $k$ \cite[Section 11.3, Theorem 2]{WelshMatroidTheory}.

\begin{corollary}\label{cor:truncation}
If $M$ is algebraic over $k$, then the truncation of $M$ is algebraic over $k$.
\end{corollary}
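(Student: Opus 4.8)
The plan is to express the basis generating polynomial of the truncation of $M$ as $g(\partial) \circ f(x)$, where $f$ is a volume polynomial witnessing that $M$ is algebraic over $k$ and $g$ is a suitable positive linear combination of $\partial_1, \dots, \partial_n$. First I would apply Proposition~\ref{prop:algebraicPolymatroid} (and the observation following its proof): since $M$ is algebraic over $k$, there is a multiaffine realizable volume polynomial $f = \sum_{B \in \mathcal{B}(M)} b_B\, x^{[B]}$ over $k$ with all $b_B > 0$ and support equal to the set $\mathcal{B}(M)$ of bases of $M$; write $E = [n]$ and $d = \rk(M)$, and assume $d \ge 1$, since the truncation is only defined in this case. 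Next I would observe that the uniform matroid $U_{n-1,n}$ is algebraic over every field $k$: it is realized, for instance, by the elements $t_1, \dots, t_{n-1},\, t_1 + \cdots + t_{n-1}$, where $t_1, \dots, t_{n-1}$ are algebraically independent over $k$. Hence Proposition~\ref{prop:algebraicPolymatroid} provides a realizable volume polynomial $p = \sum_{i=1}^{n} c_i \prod_{j \ne i} x_j$ over $k$ with all $c_i > 0$, whose support is the set of $(n-1)$-element subsets of $[n]$. Since $p = g(\partial) \circ x^{[(1,\dots,1)]}$ for $g(\partial) = \sum_{i=1}^{n} c_i \partial_i$, the polynomial $g$ is a realizable covolume polynomial over $k$ by definition. (Alternatively, for $n \ge 2$ a general member of $|\mathcal{O}(1,\dots,1)|$ on $(\mathbb{P}^1)^n$ is irreducible by Bertini and determines the realizable volume polynomial $e_{n-1}$, so one may take all $c_i = 1$.)

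By Theorem~\ref{thm:covolumecharacterization}, the polynomial
\[
g(\partial) \circ f(x) = \sum_{B \in \mathcal{B}(M)} b_B \sum_{i \in B} c_i\, x^{[B \setminus i]}
\]
is then a realizable volume polynomial over $k$. Each coefficient $b_B c_i$ on the right-hand side is positive, so no cancellation occurs: the coefficient of $x^{[A]}$ for a subset $A$ with $|A| = d-1$ equals $\sum_{i\,:\, A \cup \{i\} \in \mathcal{B}(M)} b_{A \cup \{i\}}\, c_i$, which is nonzero precisely when $A$ extends to a base of $M$, i.e. when $A$ is an independent set of $M$ with $d-1$ elements. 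Thus the support of $g(\partial) \circ f$ is exactly the set of bases of the truncation of $M$, and Proposition~\ref{prop:algebraicPolymatroid}, applied in the reverse direction, shows that the truncation is algebraic over $k$.

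The only step carrying real content is the verification that $\partial_1 + \cdots + \partial_n$ is a realizable covolume polynomial over an arbitrary field $k$, which I reduce above to the elementary fact that $U_{n-1,n}$ is algebraic over every field; the remainder is bookkeeping with supports. I would also note that, once Theorem~\ref{thm:MatroidIntersection} is available, this corollary follows from it immediately, since the truncation of $M$ is the matroid intersection $M \wedge U_{n-1,n}$ and has the expected rank $d + (n-1) - n = d-1$.
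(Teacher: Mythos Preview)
Your proof is correct and follows the same approach as the paper: apply a positive linear form in $\partial$ to a realizable volume polynomial witnessing $M$, then invoke Theorem~\ref{thm:covolumecharacterization} and Proposition~\ref{prop:algebraicPolymatroid}. The only difference is that the paper simply asserts that any nonnegative linear form is a realizable covolume polynomial and takes $g=\sum_i \partial_i$, whereas you justify this via the algebraicity of $U_{n-1,n}$; your parenthetical Bertini remark is the more direct route and closer to what the paper has in mind.
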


\begin{proof}
Since any nonnegative linear form is a realizable covolume polynomial,  $(\sum_i \partial_i) \circ f(x)$ is a realizable volume polynomial over $k$ for any realizable volume polynomial $f(x)$ over $k$ by Theorem~\ref{thm:covolumecharacterization}. 
If the support of $f$ is the set of bases of $M$, then the support of $(\sum_i \partial_i) \circ f(x)$ is the set of bases of the truncation of $M$. The conclusion follows from Proposition~\ref{prop:algebraicPolymatroid}.
\end{proof}

The \emph{dual} a matroid $M$ on $E$, denoted $M^*$, is the matroid on $E$ where a subset is a basis if and only if its complement is a basis of $M$.
We observe that the duality in matroid theory corresponds to the duality between volume and covolume polynomials. Using this connection, we prove the dual statement of Corollary~\ref{cor:truncation}. Recall that the \emph{Higgs lift} of $M$ is the dual of the truncation of $M^*$.

\begin{corollary}\label{cor:lift}
If $M$ is algebraic over $k$, then the Higgs lift of $M$ is algebraic over $k$.
\end{corollary}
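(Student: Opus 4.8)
The plan is to reduce the statement about Higgs lifts to Corollary~\ref{cor:truncation} using the duality between volume and covolume polynomials. The key observation is that matroid duality corresponds, at the level of polynomials, to the passage between a volume polynomial and its associated covolume polynomial. Concretely, suppose $M$ is a matroid of rank $d$ on $E=[n]$ that is algebraic over $k$. By Proposition~\ref{prop:algebraicPolymatroid}, there is a multiaffine realizable volume polynomial $f(x)=\sum_{B} x^B$ over $k$, summed over the bases $B$ of $M$, where $x^B=\prod_{i\in B}x_i$. Take $\mu=(1,\ldots,1)\in\ZZ^n_{\ge 0}$, so that $f\in\QQ[x]_{\le\mu}$, and form the associated realizable covolume polynomial $g(\partial)=\sum_B \partial^{\mu-B}=\sum_B \partial^{[n]\setminus B}$. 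Since the exponent vectors $[n]\setminus B$ are exactly the indicator vectors of the complements of bases of $M$, the support of $g$ is the set of bases of the dual matroid $M^*$.

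Next I would apply the truncation operator on the covolume side. Because $g$ is a realizable covolume polynomial over $k$, the polynomial $g(\partial)\circ x^{[\mu]}$ is a realizable volume polynomial over $k$ whose support is the set of bases of $M^*$; by Corollary~\ref{cor:truncation}, the truncation of $M^*$ is again algebraic over $k$, witnessed by a multiaffine realizable volume polynomial $h(x)=\sum_{C} x^C$ over $k$ summed over the bases $C$ of the truncation $T(M^*)$, which has rank $n-d-1$. Now run the duality construction once more: with $\mu'=(1,\ldots,1)$, the polynomial $\widetilde{g}(\partial)=\sum_C \partial^{[n]\setminus C}$ is a realizable covolume polynomial over $k$, so $\widetilde{g}(\partial)\circ x^{[\mu']}$ is a realizable volume polynomial over $k$, and its support consists of the indicator vectors of complements of bases of $T(M^*)$, which is precisely the set of bases of $\bigl(T(M^*)\bigr)^*$, the Higgs lift of $M$. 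Invoking Proposition~\ref{prop:algebraicPolymatroid} once more gives that the Higgs lift of $M$ is algebraic over $k$.

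The main technical point to verify carefully is the correspondence between the multiaffine realizable volume polynomial attached to a matroid and the one attached to its dual via the operation $f=\sum_B x^B \mapsto g=\sum_B \partial^{\mu-B}$: one must check that the polynomial $g(\partial)\circ x^{[\mu]}$ is indeed multiaffine with support equal to the bases of $M^*$, which follows from $\partial^{\mu-B}\circ x^{[\mu]}=x^{[B]}=x^B$ (as each exponent is $0$ or $1$), and that this operation is exactly the passage between realizable volume and realizable covolume polynomials recorded in the basic construction of Section~\ref{sec:volumecovolume}. Everything else is a formal chaining of Proposition~\ref{prop:algebraicPolymatroid}, Corollary~\ref{cor:truncation}, and the definition of realizable covolume polynomials; I do not anticipate a genuine obstacle, only the bookkeeping of which ground set complement corresponds to which polynomial operation.
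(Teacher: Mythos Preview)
Your argument has a genuine gap, and in fact if it worked it would prove something much stronger than the corollary: it would show that the dual of every algebraic matroid is algebraic, which is a well-known open question (the paper alludes to it just after Theorem~\ref{thm:MatroidIntersection}).

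The error is in your step~3. You correctly form the realizable covolume polynomial $g(\partial)=\sum_B c_B\,\partial^{[n]\setminus B}$, whose support (as a polynomial in $\partial$) is the set of bases of $M^*$. But you then assert that $g(\partial)\circ x^{[\mu]}$ is a realizable volume polynomial \emph{whose support is the set of bases of $M^*$}. Your own computation in the last paragraph shows this is false: $\partial^{\mu-B}\circ x^{[\mu]}=x^{B}$, so $g(\partial)\circ x^{[\mu]}=\sum_B c_B\,x^{B}=f(x)$, whose support is the set of bases of $M$, not of $M^*$. The map $f\mapsto g$ swaps $M$ with $M^*$ at the level of supports, but the map $g\mapsto g\circ x^{[\mu]}$ swaps back; you have simply returned to where you started, and you have not produced a realizable volume polynomial witnessing that $M^*$ is algebraic. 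Consequently you cannot invoke Corollary~\ref{cor:truncation} for $M^*$.

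The paper's proof sidesteps this entirely: it never tries to realize $M^*$ as an algebraic matroid. Instead it works on the covolume side throughout, applying Theorem~\ref{thm:volumecharacterization} to conclude that $(\sum_i x_i)\cdot g(\partial)$ is again a realizable covolume polynomial. A short computation shows the support of $(\sum_i x_i)\cdot g(\partial)$ is exactly the set of bases of $T(M^*)$, so applying this covolume polynomial to $x^{[\mu]}$ yields a realizable volume polynomial supported on the bases of $(T(M^*))^*$, the Higgs lift of $M$. The point is that truncation on the dual matroid corresponds to the operator $(\sum_i x_i)\cdot{-}$ on covolume polynomials, dual to the operator $(\sum_i \partial_i)\circ{-}$ used for Corollary~\ref{cor:truncation}; one never needs $M^*$ itself to be algebraic.
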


\begin{proof}
Since any nonnegative linear form is a realizable volume polynomial,  $(\sum_i x_i) \cdot g(\partial)$ is a realizable covolume polynomial over $k$ for any realizable covolume polynomial $g(\partial)$ over $k$ by Theorem~\ref{thm:volumecharacterization}. The conclusion follows from Proposition~\ref{prop:algebraicPolymatroid}.
\end{proof}

We now exploit multiplicative properties of realizable volume polynomials and realizable covolume polynomials to prove a result in matroid theory. 

\begin{definition}
Let $M_1$ and $M_2$ be matroids on a common ground set~$E$.
\begin{enumerate}[(1)]\itemsep 5pt
\item The \emph{union} $M_1 \vee M_2$ is the matroid on $E$ whose independent sets are the subsets of $E$ that can be expressed as a union $I_1 \cup I_2$, where $I_1$ is an independent set of $M_1$ and $I_2$ is an independent set of $M_2$.
\item The \emph{intersection} $M_1 \wedge M_2$ is the matroid on $E$ whose spanning sets are the subsets of $E$ that can be expressed as an intersection $S_1 \cap S_2$, where $S_1$ is a spanning set of $M_1$ and $S_2$ is a spanning set of $M_2$.
\end{enumerate}
The two constructions are related by the matroid duality by $M_1 \wedge M_2 =(M_1^* \vee M_2^*)^*$.
\end{definition}

We say that $M_1 \vee M_2$ has the \emph{expected rank} if the set of bases of $M_1\vee M_2$ is 
\[
\big\{B=B_1\cup B_2\mid B_1\in J_1, B_2\in J_2, \text{ and }B_1\cap B_2=\varnothing\big\},
\]
where $J_1$ is the set of bases of $M_1$ and  $J_2$ is the set of bases of $M_2$. Dually, we say that 
$M_1 \wedge M_2$ has the expected rank if the set of bases of $M_1 \wedge M_2$ is 
\[
\big\{B=B_1\cap B_2\mid B_1\in J_1, B_2\in J_2, \text{ and }B_1\cup B_2=E\big\}.
\]

Welsh's theorem states that the union $M_1 \vee M_2$ is algebraic over $k$ when both $M_1$ and $M_2$ are algebraic over $k$ \cite[Section~11.3, Theorem~4]{WelshMatroidTheory}. We prove the dual statement.

\begin{theorem}\label{thm:MatroidIntersection}
The intersection $M_1\wedge M_2$ is algebraic over $k$
 when both $M_1$ and $M_2$ are algebraic over $k$.
\end{theorem}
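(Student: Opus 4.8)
The strategy is to reduce the statement about matroid intersection to the multiplicativity of realizable covolume polynomials (Theorem~\ref{thm:covolumeproduct}), exploiting that matroid duality corresponds to the duality between volume and covolume polynomials. First I would translate the hypotheses via Proposition~\ref{prop:algebraicPolymatroid}: since $M_1$ and $M_2$ are algebraic over $k$, there are multiaffine realizable volume polynomials $f_1$ and $f_2$ over $k$ whose supports are the sets of bases $J_1$ and $J_2$ of $M_1$ and $M_2$. The plan is to pass to the duals: if $M_i$ has rank $d_i$ on the ground set $E=[n]$, then for $\mu=(1,\ldots,1)$ the polynomial $g_i(\partial)\coloneq \sum_{\alpha} c^{(i)}_\alpha \partial^{\mu-\alpha}$ (where $f_i=\sum_\alpha c^{(i)}_\alpha x^{[\alpha]}$) is a realizable covolume polynomial over $k$ by the basic construction, and its support, as a set of squarefree monomials, is exactly the set of complements $\{E\setminus B \mid B \in J_i\}$, i.e.\ the set of bases of $M_i^*$.

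Next I would multiply: by Theorem~\ref{thm:covolumeproduct}, $g_1 g_2$ is a realizable covolume polynomial over $k$. Its support consists of the exponent vectors $\gamma=\gamma_1+\gamma_2$ with $\gamma_i$ in the support of $g_i$, but a realizable covolume polynomial is (dual to) a Lorentzian polynomial, so its support is $\mathrm{M}$-convex and in particular all its members have the same coordinate sum; thus only the contributions with $\gamma_1$ and $\gamma_2$ disjoint survive as genuine basis-like terms, and the support of $g_1g_2$ is (the indicator set of) $\{(E\setminus B_1)\cup(E\setminus B_2) \mid B_i\in J_i,\ (E\setminus B_1)\cap(E\setminus B_2)=\varnothing\}$. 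Writing $C=(E\setminus B_1)\cup(E\setminus B_2)=E\setminus(B_1\cap B_2)$, this is precisely the set of bases of $M_1^*\vee M_2^*$ when the union has the expected rank — and here it does, because the support of $g_1g_2$ being $\mathrm{M}$-convex forces the combinatorial union to realize its expected rank. Hence $g_1g_2$ is a realizable covolume polynomial whose support is the set of bases of $M_1^*\vee M_2^*$.

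Finally I would dualize once more: applying Proposition~\ref{prop:algebraicPolymatroid} (in the covolume direction, i.e.\ passing from $g_1g_2$ back to a realizable volume polynomial $f\coloneq g_1g_2\circ x^{[2\mu]}$ or equivalently using that a realizable covolume polynomial with multiaffine support corresponds to a realizable volume polynomial with the complementary support) shows that $M_1^*\vee M_2^*$ is algebraic over $k$. By the matroid duality identity $M_1\wedge M_2=(M_1^*\vee M_2^*)^*$, and since the dual of a matroid algebraic over $k$ is again algebraic over $k$ (which follows from Proposition~\ref{prop:algebraicPolymatroid} together with the volume--covolume duality, just as in the proofs of Corollaries~\ref{cor:truncation} and \ref{cor:lift}), we conclude that $M_1\wedge M_2$ is algebraic over $k$. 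The main obstacle I anticipate is bookkeeping at the combinatorial level: carefully verifying that the support of the product $g_1g_2$ matches the bases of the matroid union $M_1^*\vee M_2^*$ — in particular that the $\mathrm{M}$-convexity of the support automatically enforces the expected-rank condition so that no spurious (non-disjoint) terms appear and no basis of the union is missed.
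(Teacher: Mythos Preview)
Your proposal has a fatal gap: you invoke the claim that ``the dual of a matroid algebraic over $k$ is again algebraic over $k$,'' but this is one of the oldest open problems in the subject---the paper itself remarks, immediately after the theorem, that the result is merely \emph{consistent} with a positive answer to this question. It does not follow from Proposition~\ref{prop:algebraicPolymatroid} together with volume--covolume duality: if $M$ is algebraic, Proposition~\ref{prop:algebraicPolymatroid} yields a realizable \emph{volume} polynomial supported on the bases of $M$, and hence a realizable \emph{covolume} polynomial supported on the bases of $M^*$; but to conclude that $M^*$ is algebraic one would need a realizable \emph{volume} polynomial supported on the bases of $M^*$, and there is no known way to pass from one to the other. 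Corollaries~\ref{cor:truncation} and~\ref{cor:lift} do not establish this either---they exploit specific operators and never assert that a dual matroid is algebraic.

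There is a second gap: since all coefficients are nonnegative, the support of $g_1g_2$ is the Minkowski sum $K_1+K_2$, which in general contains vectors with entries $\ge 2$ and is \emph{not} the set of bases of $M_1^*\vee M_2^*$ (e.g.\ for $M_1=M_2=U_{1,2}$ one gets $\{(2,0),(1,1),(0,2)\}$). $\mathrm{M}$-convexity does not force the support into $\{0,1\}^n$, nor does it force the expected-rank condition. The paper sidesteps both issues: rather than arguing that $M_1^*\vee M_2^*$ is algebraic and then dualizing, it applies $g_1g_2$ directly to $x^E=x_1\cdots x_n$, which kills every non-multiaffine term and leaves a realizable volume polynomial whose support is exactly $\{B_1\cap B_2 : B_i\in J_i,\ B_1\cup B_2=E\}$, i.e.\ the bases of $M_1\wedge M_2$ when the intersection has expected rank. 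When it does not, the paper invokes a theorem of Cunningham to repeatedly take Higgs lifts of $M_1$ and $M_2$ (algebraic by Corollary~\ref{cor:lift}) until the expected-rank case applies. Your anticipated ``bookkeeping obstacle'' is therefore a genuine obstacle, and the non-expected-rank case requires this extra ingredient.
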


The result is consistent with a positive answer to one of the oldest questions in the subject: Is the dual of an algebraic matroid also algebraic?

\begin{proof}
 Let $J_i$ be the set of bases of $M_i$ and let $K_i$ be the set of bases of $N_i\coloneq M_i^*$. 
  By Proposition \ref{prop:algebraicPolymatroid}, there are realizable volume polynomials $f_1$ and $f_2$ over $k$ such that the support of $f_1$ is $J_1$ and the support of $f_2$ is $J_2$.
 Thus, there are realizable covolume polynomials $g_1$ and $g_2$ over $k$ such that the support of $g_1$ is $K_1$ and the support of $g_2$ is $K_2$.
By Theorem~\ref{thm:covolumeproduct}, the product of realizable covolume polynomials over $k$ is a realizable covolume polynomial over $k$, so $g_1g_2$ is a realizable covolume polynomial over $k$.
If $M_1 \wedge M_2$ has the expected rank, the set of bases of $M_1 \wedge M_2$ is the support of the realizable volume polynomial  $(g_1g_2)\circ x^E$. By Proposition~\ref{prop:algebraicPolymatroid}, this implies that $M_1 \wedge M_2$ is algebraic over $k$.

When $M_1 \wedge M_2$ fails to have the expected rank, we use 
a result of Cunningham \cite[Theorem 2]{Cunningham}: We can repeatedly truncate $N_1$ and $N_2$ without changing their union so that the union of the truncations has the expected rank. In other words, we can take Higgs lifts of $M_1$ and $M_2$ without changing their intersection so that the intersection of the Higgs lifts has the expected rank. 
We only need to check that the  Higgs lift of a matroid algebraic over $k$ is algebraic over $k$, which is Corollary~\ref{cor:lift}.
\end{proof}

\begin{remark}
In the proof of Theorem~\ref{thm:MatroidIntersection}, the use of Theorem~\ref{thm:covolumeproduct} cannot be replaced by Corollary~\ref{cor:covolumeproduct}, which states that the product of covolume polynomials over $k$ is a covolume polynomial over $k$.
\end{remark}

\begin{remark}
Welsh's theorem on the union of algebraic matroids  can be proved using the same argument. In place of Theorem~\ref{thm:covolumeproduct}, one uses Corollary~\ref{cor:prodofvol}, the easier fact that the product of realizable volume polynomials over $k$ is a realizable volume polynomial over $k$.
\end{remark}

\begin{remark}
A real $(1,1)$-class $[\omega]$ on a compact K\"ahler manifold $Y$ is \emph{semipositive}\footnote{When $Y$ is a smooth projective variety over $\CC$, every semiample divisor class is semipositive, but not every nef divisor class is semipositive \cite{yau1974curvature}. For an example of a nef divisor class that is semipositive but not semiample, see \cite{koike2015minimal}. A comprehensive survey of semipositive classes on compact K\"ahler manifolds can be found in \cite{Tosatti}.}  if it contains a smooth semipositive representative, that is, if there is a smooth function $\varphi$ on $Y$ such that
\[
\omega+i\partial \overline{\partial} \varphi \ge 0.
\]
A degree $d$ homogeneous polynomial $f$  is a \emph{realizable analytic volume polynomial} if there is a $d$-dimensional compact K\"ahler manifold $Y$ and semipositive classes $[\omega_1],\dots,[\omega_n]$ such that 
     \[
     f(x_1,\dots,x_n)=\int_Y (x_1\omega_1+\dots +x_n\omega_n)^{\wedge d}.
     \]
    A homogeneous polynomial $f$ is an \emph{analytic volume polynomial} if it is a limit of realizable analytic volume polynomials.
The mixed Hodge--Riemann relations for compact K\"ahler manifolds imply that an analytic volume polynomial is a Lorentzian polynomial, so the support of an analytic volume polynomial defines a polymatroid. An \emph{analytic polymatroid} is the support of 
a realizable analytic volume polynomial. 
These generalize algebraic polymatroids over $\mathbb{C}$ in the same way K\"ahler manifolds generalize smooth projective varieties over $\mathbb{C}$.
It will be interesting to decide whether analytic volume polynomials satisfy the analogues of Theorems~\ref{thm:covolumecharacterization},~\ref{thm:volumecharacterization},~\ref{thm:symbol}, and to decide whether 
the classes of analytic matroids and algebraic matroids over $\CC$ coincide. 
\end{remark}

\bibliographystyle{amsalpha}
\begin{bibdiv}
\begin{biblist}

\bib{Ardila-Boocher}{article}{
      author={Ardila, Federico},
      author={Boocher, Adam},
       title={The closure of a linear space in a product of lines},
    language={English},
        date={2016},
        ISSN={0925-9899},
     journal={J. Algebr. Comb.},
      volume={43},
      number={1},
       pages={199\ndash 235},
}

\bib{Aluffi}{article}{
      author={Aluffi, Paolo},
       title={Lorentzian polynomials, {S}egre classes, and adjoint polynomials
  of convex polyhedral cones},
        date={2024},
        ISSN={0001-8708,1090-2082},
     journal={Adv. Math.},
      volume={437},
       pages={Paper No. 109440, 37},
         url={https://doi.org/10.1016/j.aim.2023.109440},
      review={\MR{4674860}},
}

\bib{BorceaBranden2}{article}{
      author={Borcea, Julius},
      author={Br\"and\'en, Petter},
       title={The {L}ee-{Y}ang and {P}\'olya-{S}chur programs. {I}. {L}inear
  operators preserving stability},
        date={2009},
        ISSN={0020-9910,1432-1297},
     journal={Invent. Math.},
      volume={177},
      number={3},
       pages={541\ndash 569},
         url={https://doi.org/10.1007/s00222-009-0189-3},
      review={\MR{2534100}},
}

\bib{BorceaBranden1}{article}{
      author={Borcea, Julius},
      author={Br\"and\'en, Petter},
       title={P\'olya-{S}chur master theorems for circular domains and their
  boundaries},
        date={2009},
        ISSN={0003-486X,1939-8980},
     journal={Ann. of Math. (2)},
      volume={170},
      number={1},
       pages={465\ndash 492},
         url={https://doi.org/10.4007/annals.2009.170.465},
      review={\MR{2521123}},
}

\bib{BrandenHuh}{article}{
      author={Br{\"a}nd{\'e}n, Petter},
      author={Huh, June},
       title={Lorentzian polynomials},
    language={English},
        date={2020},
        ISSN={0003-486X},
     journal={Ann. Math. (2)},
      volume={192},
      number={3},
       pages={821\ndash 891},
}

\bib{BrandenBall}{article}{
      author={Br\"and\'en, Petter},
       title={Spaces of {L}orentzian and real stable polynomials are
  {E}uclidean balls},
        date={2021},
        ISSN={2050-5094},
     journal={Forum Math. Sigma},
      volume={9},
       pages={Paper No. e73, 8},
         url={https://doi.org/10.1017/fms.2021.70},
      review={\MR{4340632}},
}

\bib{MultiDegreePositive}{article}{
      author={Castillo, Federico},
      author={Cid-Ruiz, Yairon},
      author={Li, Binglin},
      author={Monta\~no, Jonathan},
      author={Zhang, Naizhen},
       title={When are multidegrees positive?},
        date={2020},
        ISSN={0001-8708,1090-2082},
     journal={Adv. Math.},
      volume={374},
       pages={107382, 34},
         url={https://doi.org/10.1016/j.aim.2020.107382},
      review={\MR{4157582}},
}

\bib{Equivariant}{misc}{
      author={Cid-Ruiz, Yairon},
      author={Li, Yupeng},
      author={Matherne, Jacob~P.},
       title={Log-concavity of polynomials arising from equivariant
  cohomology},
        date={2024},
         url={https://arxiv.org/abs/2411.17572},
}

\bib{Cunningham}{article}{
      author={Cunningham, W.~H.},
       title={Binary matroid sums},
        date={1979},
        ISSN={0033-5606,1464-3847},
     journal={Quart. J. Math. Oxford Ser. (2)},
      volume={30},
      number={119},
       pages={271\ndash 281},
         url={https://doi.org/10.1093/qmath/30.3.271},
      review={\MR{545065}},
}

\bib{olivier1996theoremes}{article}{
      author={Debarre, Olivier},
       title={Th\'eor\`emes de connexit\'e{} pour les produits d'espaces
  projectifs et les grassmanniennes},
        date={1996},
        ISSN={0002-9327,1080-6377},
     journal={Amer. J. Math.},
      volume={118},
      number={6},
       pages={1347\ndash 1367},
  url={http://muse.jhu.edu/journals/american_journal_of_mathematics/v118/118.6debarre.pdf},
      review={\MR{1420927}},
}

\bib{FultonToric}{book}{
      author={Fulton, William},
       title={Introduction to toric varieties},
      series={Annals of Mathematics Studies},
   publisher={Princeton University Press, Princeton, NJ},
        date={1993},
      volume={131},
        ISBN={0-691-00049-2},
         url={https://doi.org/10.1515/9781400882526},
        note={The William H. Roever Lectures in Geometry},
      review={\MR{1234037}},
}

\bib{GriffithsHarris}{book}{
      author={Griffiths, Phillip},
      author={Harris, Joseph},
       title={Principles of algebraic geometry},
      series={Wiley Classics Library},
   publisher={John Wiley \& Sons, Inc., New York},
        date={1994},
        ISBN={0-471-05059-8},
         url={https://doi.org/10.1002/9781118032527},
        note={Reprint of the 1978 original},
      review={\MR{1288523}},
}

\bib{Garding}{article}{
      author={G\aa~rding, Lars},
       title={Linear hyperbolic partial differential equations with constant
  coefficients},
        date={1951},
        ISSN={0001-5962,1871-2509},
     journal={Acta Math.},
      volume={85},
       pages={1\ndash 62},
         url={https://doi.org/10.1007/BF02395740},
      review={\MR{41336}},
}

\bib{GS25}{misc}{
      author={Grund, Lukas},
      author={S{\"u}{\ss}, Hendrik},
       title={Linear operators preserving volume polynomials},
         url={https://arxiv.org/abs/2503.18146},
        note={arXiv:2503.18146},
}

\bib{GortzWedhorn}{book}{
      author={G\"ortz, Ulrich},
      author={Wedhorn, Torsten},
       title={Algebraic geometry {II}: {C}ohomology of schemes---with examples
  and exercises},
      series={Springer Studium Mathematik---Master},
   publisher={Springer Spektrum, Wiesbaden},
        date={2023},
        ISBN={978-3-65843-030-6; 978-3-65843-031-3},
         url={https://doi.org/10.1007/978-3-658-43031-3},
      review={\MR{4704076}},
}

\bib{MonomialIdeals}{book}{
      author={Herzog, J\"urgen},
      author={Hibi, Takayuki},
       title={Monomial ideals},
      series={Graduate Texts in Mathematics},
   publisher={Springer-Verlag London, Ltd., London},
        date={2011},
      volume={260},
        ISBN={978-0-85729-105-9},
         url={https://doi.org/10.1007/978-0-85729-106-6},
      review={\MR{2724673}},
}

\bib{HHMWW}{misc}{
      author={Huang, Daoji},
      author={Huh, June},
      author={Micha{\l}ek, Mateusz},
      author={Wang, Botong},
      author={Wang, Shouda},
       title={Realizations of homology classes and projection areas},
        note={arXiv:2505.08881},
}

\bib{SchurLogconcave}{article}{
      author={Huh, June},
      author={Matherne, Jacob~P.},
      author={M\'esz\'aros, Karola},
      author={St.~Dizier, Avery},
       title={Logarithmic concavity of {S}chur and related polynomials},
        date={2022},
        ISSN={0002-9947,1088-6850},
     journal={Trans. Amer. Math. Soc.},
      volume={375},
      number={6},
       pages={4411\ndash 4427},
         url={https://doi.org/10.1090/tran/8606},
      review={\MR{4419063}},
}

\bib{HuhMichalekWang}{unpublished}{
      author={Huh, June},
      author={Micha{\l}ek, Mateusz},
      author={Wang, Botong},
       title={Inequalities for volume polynomials},
        note={Forthcoming},
}

\bib{HuhMilnor}{article}{
      author={Huh, June},
       title={Milnor numbers of projective hypersurfaces and the chromatic
  polynomial of graphs},
        date={2012},
        ISSN={0894-0347,1088-6834},
     journal={J. Amer. Math. Soc.},
      volume={25},
      number={3},
       pages={907\ndash 927},
         url={https://doi.org/10.1090/S0894-0347-2012-00731-0},
      review={\MR{2904577}},
}

\bib{HuhICM}{incollection}{
      author={Huh, June},
       title={Combinatorics and {Hodge} theory},
    language={English},
        date={2023},
   booktitle={International congress of mathematicians 2022, icm 2022,
  helsinki, finland, virtual, july 6--14, 2022. volume 1. prize lectures},
   publisher={Berlin: European Mathematical Society (EMS)},
       pages={212\ndash 239},
}

\bib{KollarLandesman}{article}{
      author={Koll\'ar, J\'anos},
      author={Landesman, Aaron},
       title={A {F}ulton-{H}ansen theorem for almost homogeneous spaces},
        date={2022},
        ISSN={1972-6724,2198-2759},
     journal={Boll. Unione Mat. Ital.},
      volume={15},
      number={1-2},
       pages={253\ndash 268},
         url={https://doi.org/10.1007/s40574-021-00302-8},
      review={\MR{4390552}},
}

\bib{Kleiman74}{article}{
      author={Kleiman, Steven~L.},
       title={The transversality of a general translate},
        date={1974},
        ISSN={0010-437X,1570-5846},
     journal={Compositio Math.},
      volume={28},
       pages={287\ndash 297},
      review={\MR{360616}},
}

\bib{koike2015minimal}{inproceedings}{
      author={Koike, Takayuki},
       title={On minimal singular metrics of certain class of line bundles
  whose section ring is not finitely generated},
        date={2015},
   booktitle={Annales de l'institut fourier},
      volume={65},
       pages={1953\ndash 1967},
}

\bib{Lazarsfeld1}{book}{
      author={Lazarsfeld, Robert},
       title={Positivity in algebraic geometry. {I}},
      series={Ergebnisse der Mathematik und ihrer Grenzgebiete. 3. Folge. A
  Series of Modern Surveys in Mathematics [Results in Mathematics and Related
  Areas. 3rd Series. A Series of Modern Surveys in Mathematics]},
   publisher={Springer-Verlag, Berlin},
        date={2004},
      volume={48},
        ISBN={3-540-22533-1},
         url={https://doi.org/10.1007/978-3-642-18808-4},
        note={Classical setting: line bundles and linear series},
      review={\MR{2095471}},
}

\bib{Liggett}{article}{
      author={Liggett, Thomas~M.},
       title={Ultra logconcave sequences and negative dependence},
        date={1997},
        ISSN={0097-3165,1096-0899},
     journal={J. Combin. Theory Ser. A},
      volume={79},
      number={2},
       pages={315\ndash 325},
         url={https://doi.org/10.1006/jcta.1997.2790},
      review={\MR{1462561}},
}

\bib{Lindstrom}{article}{
      author={Lindstr\"om, Bernt},
       title={On the algebraic characteristic set for a class of matroids},
        date={1985},
        ISSN={0002-9939,1088-6826},
     journal={Proc. Amer. Math. Soc.},
      volume={95},
      number={1},
       pages={147\ndash 151},
         url={https://doi.org/10.2307/2045591},
      review={\MR{796464}},
}

\bib{Lindstrom88}{incollection}{
      author={Lindstr\"{o}m, B.},
       title={Matroids, algebraic and nonalgebraic},
        date={1988},
   booktitle={Algebraic, extremal and metric combinatorics, 1986 ({M}ontreal,
  {PQ}, 1986)},
      series={London Math. Soc. Lecture Note Ser.},
      volume={131},
   publisher={Cambridge Univ. Press, Cambridge},
       pages={166\ndash 174},
      review={\MR{1052666}},
}

\bib{Lindstrom89}{article}{
      author={Lindstr\"om, B.},
       title={Matroids algebraic over {$F(t)$} are algebraic over {$F$}},
        date={1989},
        ISSN={0209-9683},
     journal={Combinatorica},
      volume={9},
      number={1},
       pages={107\ndash 109},
         url={https://doi.org/10.1007/BF02122691},
      review={\MR{1010307}},
}

\bib{Liu}{book}{
      author={Liu, Qing},
       title={Algebraic geometry and arithmetic curves},
      series={Oxford Graduate Texts in Mathematics},
   publisher={Oxford University Press, Oxford},
        date={2002},
      volume={6},
        ISBN={0-19-850284-2},
        note={Translated from the French by Reinie Ern\'e, Oxford Science
  Publications},
      review={\MR{1917232}},
}

\bib{Menon}{article}{
      author={Menon, K.~V.},
       title={On the convolution of logarithmically concave sequences},
        date={1969},
        ISSN={0002-9939,1088-6826},
     journal={Proc. Amer. Math. Soc.},
      volume={23},
       pages={439\ndash 441},
         url={https://doi.org/10.2307/2037189},
      review={\MR{246012}},
}

\bib{Minkowski}{article}{
      author={Minkowski, Hermann},
       title={Volumen und {O}berfl\"ache},
        date={1903},
        ISSN={0025-5831,1432-1807},
     journal={Math. Ann.},
      volume={57},
      number={4},
       pages={447\ndash 495},
         url={https://doi.org/10.1007/BF01445180},
      review={\MR{1511220}},
}

\bib{Murota}{book}{
      author={Murota, Kazuo},
       title={Discrete convex analysis},
      series={SIAM Monographs on Discrete Mathematics and Applications},
   publisher={Society for Industrial and Applied Mathematics (SIAM),
  Philadelphia, PA},
        date={2003},
        ISBN={0-89871-540-7},
         url={https://doi.org/10.1137/1.9780898718508},
      review={\MR{1997998}},
}

\bib{OxleyMatroidTheory}{book}{
      author={Oxley, James},
       title={Matroid theory},
     edition={Second},
      series={Oxford Graduate Texts in Mathematics},
   publisher={Oxford University Press, Oxford},
        date={2011},
      volume={21},
        ISBN={978-0-19-960339-8},
         url={https://doi.org/10.1093/acprof:oso/9780198566946.001.0001},
      review={\MR{2849819}},
}

\bib{Piff}{thesis}{
      author={Piff, M.~J.},
       title={Some problems in combinatorial theory},
        type={D. Phil. thesis},
        date={1972},
}

\bib{Postnikov}{article}{
      author={Postnikov, Alexander},
       title={Permutohedra, associahedra, and beyond},
        date={2009},
        ISSN={1073-7928,1687-0247},
     journal={Int. Math. Res. Not. IMRN},
      number={6},
       pages={1026\ndash 1106},
         url={https://doi.org/10.1093/imrn/rnn153},
      review={\MR{2487491}},
}

\bib{ross2025diagonalizations}{misc}{
      author={Ross, Julius},
      author={Süß, Hendrik},
       title={Diagonalizations of denormalized volume polynomials},
        note={arXiv:2502.13305},
}

\bib{RSW23}{misc}{
      author={Ross, Julius},
      author={S{\"u}{\ss}, Hendrik},
      author={Wannerer, Thomas},
       title={Dually {L}orentzian polynomials},
         url={https://arxiv.org/abs/2304.08399},
        note={arXiv:2304.08399},
}

\bib{Schneider}{book}{
      author={Schneider, Rolf},
       title={Convex bodies: the {B}runn-{M}inkowski theory},
     edition={expanded},
      series={Encyclopedia of Mathematics and its Applications},
   publisher={Cambridge University Press, Cambridge},
        date={2014},
      volume={151},
        ISBN={978-1-107-60101-7},
      review={\MR{3155183}},
}

\bib{CompleteHomogeneousVarieties}{article}{
      author={Sancho~de Salas, Carlos},
       title={Complete homogeneous varieties: structure and classification},
        date={2003},
        ISSN={0002-9947,1088-6850},
     journal={Trans. Amer. Math. Soc.},
      volume={355},
      number={9},
       pages={3651\ndash 3667},
         url={https://doi.org/10.1090/S0002-9947-03-03280-X},
      review={\MR{1990167}},
}

\bib{SpringerLAG}{book}{
      author={Springer, T.~A.},
       title={Linear algebraic groups},
     edition={Second},
      series={Progress in Mathematics},
   publisher={Birkh\"auser Boston, Inc., Boston, MA},
        date={1998},
      volume={9},
        ISBN={0-8176-4021-5},
         url={https://doi.org/10.1007/978-0-8176-4840-4},
      review={\MR{1642713}},
}

\bib{stacks-project}{misc}{
      author={{Stacks Project Authors}, The},
       title={\textit{Stacks Project}},
         how={\url{https://stacks.math.columbia.edu}},
        date={2018},
}

\bib{Tosatti}{article}{
      author={Tosatti, Valentino},
       title={Semipositive line bundles and (1, 1)-classes},
        date={2024},
     journal={Acta Mathematica Sinica, English Series},
         url={https://doi.org/10.1007/s10114-024-3440-3},
        note={Published online: December 13, 2024; Received September 2, 2023;
  Accepted October 16, 2023},
}

\bib{WelshMatroidTheory}{book}{
      author={Welsh, D. J.~A.},
       title={Matroid theory},
      series={L. M. S. Monographs},
   publisher={Academic Press [Harcourt Brace Jovanovich, Publishers],
  London-New York},
        date={1976},
      volume={No. 8},
      review={\MR{427112}},
}

\bib{yau1974curvature}{article}{
      author={Yau, Shing-Tung},
       title={On the curvature of compact hermitian manifolds},
        date={1974},
     journal={Inventiones mathematicae},
      volume={25},
      number={3},
       pages={213\ndash 239},
}

\end{biblist}
\end{bibdiv}
\end{document}